\newtheorem{theorem}{Theorem}
\newtheorem{prop}{Proposition}
\newtheorem{lemma}{Lemma}
\newtheorem{corollary}{Corollary}
\DeclareMathOperator*{\argmax}{argmax}
\DeclareMathOperator*{\argmin}{argmin}
\newtheorem{definition}{Definition}[section]
\theoremstyle{remark}
\newtheorem{remark}{Remark}
\begin{document}
%
\title{Adaptive Susceptibility and Heterogeneity \\ in  Contagion Models on Networks}
%
%
%

\author{Renato~Pagliara and~Naomi~Ehrich~Leonard,~\IEEEmembership{Fellow,~IEEE}
\thanks{The authors are with the Department of Mechanical and Aerospace Engineering, Princeton University, Princeton, NJ, USA {\tt\small \{renato,naomi\}@princeton.edu}}
\thanks{The research was supported in part by Army Research Office grant W911NF-18-1-0325 and Office of Naval Research grant N00014-19-1-2556.}
}

\maketitle

\begin{abstract}
Contagious processes, such as spread of infectious diseases, social behaviors, or computer viruses, affect biological, social, and technological systems. Epidemic models for large populations and  finite populations on networks have been used to understand and control both transient and steady-state  behaviors. Typically it is assumed that after recovery from an infection, every agent will either return to its original susceptible state or acquire full immunity to reinfection.
We study the network SIRI (Susceptible-Infected-Recovered-Infected) model, an epidemic model for the spread of contagious processes on a network of heterogeneous agents that can adapt their susceptibility to reinfection. The model generalizes existing models to accommodate realistic conditions in which agents acquire partial or compromised immunity after first exposure to an infection. We prove necessary and sufficient conditions on model parameters and network structure that distinguish four dynamic regimes: infection-free, epidemic, endemic, and bistable.  For the bistable regime, which is not accounted for in traditional models, we show how there can be a rapid resurgent epidemic after what looks like convergence to an infection-free population.
We use the model and its predictive capability to show how control strategies can be designed to mitigate  problematic contagious behaviors.
\end{abstract}
\begin{IEEEkeywords}
Spreading dynamics, propagation of infection, complex networks, adaptive systems, multi-agent systems.
\end{IEEEkeywords}
%
\IEEEpeerreviewmaketitle

\section{Introduction}
%
%
%
%

\IEEEPARstart{C}{ontagious} processes affect biological, social, and technological network systems. In biology, a concern is the spread of disease 
across a population~\cite{anderson1992infectious}.  In social networks, the spread of information, behaviors, or cultural norms has important implications for decisions  about politics, the environment, health care, etc.  In many contexts a spike in the ``infected'' population is detrimental, e.g., in the spread of misinformation \cite{roozenbeek2019}. In other cases, the spike can be crucial, e.g., in the spread of safety instructions in an emergency \cite{carlson2014}. In technological networks, like computer networks and mobile sensor networks, the spread of a virus can lead to disruptions, while the spread of information on a changing environment can be critical to a successful mission. To understand and control the dynamics of contagion, models with sufficiently good predictive capability are warranted.


Epidemic models have been successfully used to study contagious processes in a large number of systems, ranging from the spread of infectious diseases on populations~\cite{lajmanovich1976,Kuniya2019} and memes on social networks~\cite{JinTwitter} to the evolution of riots~\cite{bonnasse2018epidemiological} and power grid failures~\cite{BernsteinPowerGrids}. The wide applicability of epidemic models has led to an increase in recent years in the number of studies in the physics and control communities focusing on theoretical epidemic models for the propagation of contagious processes on networks~\cite{pastor2015epidemic,nowzari2016analysis,PareTimeVarying,YangBiVirus,mei2017dynamics}. Studies are typically based on the SIS (Susceptible-Infected-Susceptible)  model~\cite{fall2007epidemiological,van2009virus,van2015accuracy,PareTimeVarying,YangBiVirus,mei2017dynamics}, in which every recovered individual experiences no change to its susceptibility to the infection after recovery, or on the SIR (Susceptible-Infected-Recovered) model~\cite{youssef2011individual,mei2017dynamics}, in which recovered individuals gain full immunity to the infection. {\color{black} The SIS model captures endemic behaviors in which there is a constant fraction of the population that is infected at steady-state, while the SIR model captures epidemic behaviors in which there is a rapid increase in the number of infected individuals that eventually subsides.}

Although the SIR and SIS models are useful, they fall short in accounting for what can happen in many real-world systems when agents adapt their susceptibility in more general ways after their first exposure to the infection.  For example, in the case of infectious diseases, the susceptibility of individuals to the infection can decrease after a first exposure, resulting in partial immunity as in the case of influenza~\cite{clements1986serum}. Or susceptibility can increase, resulting in compromised immunity as in the case of  tuberculosis in particular populations~\cite{verver2005rate}.
In the spread of social behaviors, the susceptibility of individuals to the ``infection'' might decrease (increase) as a result of a negative (positive) past experience that decreases (increases) the propensity of an individual to engage in the behavior. 

For example, in psychology there is a well-established ``inoculation theory''~\cite{mcguire1961,compton2019,roozenbeek2019},  which has shown that an effective way to combat the spread of misinformation is to pre-expose people to misinformation so that they develop at least partial cognitive immunity to subsequent contact with misinformation. 
Examples also abound in social animal behavior. For example, desert harvester ants regulate foraging for seeds by means of a contagious process in which successful ants returning to the nest motivate available ants in the nest to go out and forage \cite{pinter2013harvester}. Resilience of foraging rates to temperature and humidity changes outside the nest has been attributed to adaptation of susceptibility to the ``infection'' by available ants that have already been exposed to outside conditions~\cite{pagliaraAnts}.

In technological systems, such as systems comprised of interconnected mechanical parts, susceptibility to cascading failures can increase the second time around when parts become worn or compromised the first time.
When there is the opportunity for learning and design, such as in a network of autonomous robots, protocols can be designed so that agents modulate their response to an ``infected'' agent based on what they have learned from previous interactions.

In the present paper, we analyze the role of adaptive susceptibility  in the spread of a contagious process over a network of heterogeneous agents using the network SIRI (Susceptible-Infected-Recovered-Infected) epidemic model.  The network SIRI model describes susceptible agents that become infected from contact with infected agents, infected agents that  recover, and recovered agents that become reinfected from contact with infected agents. Susceptibility is adaptive when the rates of infection and reinfection differ.  
An agent acquires {\em partial immunity} ({\em compromised immunity}) to its neighbor, if after recovering from a first-time infection, it experiences {\color{black}a decrease (increase)} in susceptibility to that neighbor.  
Every agent may have a different rate of recovery and different rates of susceptibility to infection and reinfection to each of its neighbors.
SIS and SIR  are special cases of SIRI.


To the best of our knowledge we provide the first rigorous and comprehensive analysis of the network SIRI model. 
Models that consider reinfection usually only consider the case of partial immunity~\cite{gomes2004infection,stollenwerk2007phase,stollenwerk2010spatially}. 
In~\cite{pastor2015epidemic} the authors studied a discrete-time mean-field model 
with global recovery, infection, and reinfection rates, and showed through numerical simulations that when the reinfection rate is larger than the infection rate, there is an abrupt transition from an infection-free to an endemic steady state.
We formalize this observation by proving new results on the existence of a bistable regime in which a critical manifold of initial conditions separates solutions for which the infection dies out from solutions for which the infection spreads. 

Our analysis of the network SIRI model generalizes our novel results on the SIRI model of a well-mixed population~\cite{pagliaraSIRIWM}. Our contributions are as follows. First, we introduce the network SIRI model over strongly connected digraphs and present a rigorous stability analysis. We show  there can exist only a set of non-isolated infection-free equilibria (IFE) and a stable isolated endemic equilibrium (EE), and we prove conditions on the graph structure and system parameters that determine the stability of the IFE. 
Second, we prove that the model exhibits the same four distinct behavioral regimes observed in the well-mixed SIRI model: infection-free, epidemic, endemic, and bistable. We show how the four behavioral regimes are characterized by four numbers that generalize, to the network setting, the two reproduction numbers  of~\cite{pagliaraSIRIWM} and generalize previous results for the network SIS and SIR models in~\cite{fall2007epidemiological,van2009virus,van2013inhomogeneous,youssef2011individual,mei2017dynamics}. Third, we prove  features of the geometry of solutions near the IFE  in the epidemic and bistable regimes that dictate both transient and steady-state possibilities. 
For the bistable regime, which is not accounted for by the SIS and SIR models, we show how solutions can exhibit a resurgent epidemic in which an initial infection appears to die out for an arbitrarily long period of time before abruptly resurging to an endemic steady state. Finally, we show how our results can be used to design control strategies that guarantee the eradication or spread of the infection through a network of heterogeneous agents with adaptive susceptibility.

In Section~\ref{sec:Math} we present  notation and well-known results used in the paper. In Section~\ref{sec:Model} we introduce the network SIRI model and its classification into six cases.
In Section~\ref{sec:EqlAndRepNum} we study the equilibria and define new notions of reproduction numbers. In Section~\ref{sec:Stability} we analyze stability, and in Section~\ref{sec:DynamicRegimes} we prove our main result on conditions for the four behavioral regimes. In Section~\ref{sec:BisAndEpi} we examine the geometry and behavior of solutions in the epidemic and bistable regimes.
We apply our theory to control laws that guarantee desired steady-state behavior in Section~\ref{sec:Control}. We make final remarks in Section~\ref{sec:Conclusion}.

\section{Mathematical Preliminaries}\label{sec:Math}

\subsection{Notation}
We denote the $j$-th entry of $x \in \mathbb{R}^{N}$ as $x_j$, and the $(j,k)$-th entry of $M \in \mathbb{R}^{N \times N}$ as $m_{jk}$. We define $e^j \in \mathbb{R}^N$, $j = 1,\dots,N$ as the standard basis vectors. 
We define $\mathbf{0} \in \mathbb{R}^{N}$  as the zero vector, $\mathbf{1} \in \mathbb{R}^{N}$  as the vector with every entry 1, $\bar{\mathbf{0}} \in \mathbb{R}^{N \times N}$ as the zero square matrix, and $\mathbb{I} \in \mathbb{R}^{N \times N}$ as the identity matrix. We let $\textrm{diag}(x) \in \mathbb{R}^{N \times N}$  be the diagonal matrix with entries given by the entries of $x \in \mathbb{R}^N$. 

For any vectors $x,y \in \mathbb{R}^N$, we write $x \gg y$ if $x_j > y_j$ for all $j$, $x \succ y$ if $x_j \geq y_j$ for all $j$, but $x \neq y$, and $x \succeq y$ if $x_j\geq y_j$ for all $j$. Similarly, for any two matrices $M,Q \in \mathbb{R}^{N \times N}$ we write $M \gg Q$ if $m_{jk}>q_{jk}$, $M \succ Q$ if $m_{jk} \geq q_{jk}$, for any $j,k$, but $M \neq Q$, and $M \succeq Q$ if $m_{jk}\geq q_{jk}$ for any $j,k$. 

A square matrix $M$ is Hurwitz (stable) if it has no eigenvalue with positive or zero real part, and it is unstable if at least one of its eigenvalues has positive real part. 
A real square matrix $M$ is Metzler if $m_{jk}\geq 0$ for $j \neq k$. 
We denote the spectrum of a square matrix $M$ as $\lambda(M) = \{\lambda_1,\lambda_2,\dots,\lambda_N\}$, its spectral radius as $\rho(M) = \max{\{ |\lambda_j| \, \big| \,  \lambda_j \in \lambda(M) \}}$ {\color{black}and its leading eigenvalue as  $\lambda_{max}(M) = \argmax_{\lambda_j \in \lambda(M)} |\lambda_j|$}.

A weighted digraph $\mathcal{G = (V,E)}$ consists of a set of nodes $\mathcal{V}$ and a set of edges $\mathcal{E} \subseteq \mathcal{V} \times \mathcal{V}$. Each edge $(j,k) \in \mathcal{E}$ from node $j \in \mathcal{V}$ to node $k \in \mathcal{V}$ has an associated weight $a_{jk} >0$ with $a_{jk} = 0$ if $(j,k) \not\in \mathcal{E}$.
The set of neighbors of node $j$ is $\mathcal{N}_j = \{k \in \mathcal{V}| (j,k) \in \mathcal{E}\}$. $\mathcal{G}$ is {\em strongly connected} if there exists a directed path from any node $j \in \mathcal{V}$ to any other node $k \in \mathcal{V}$. The adjacency matrix $A = \{a_{jk}\}$ of  $\mathcal{G}$ is {\em irreducible} if $\mathcal{G}$ is strongly connected. The degree of node $j$ is $d_j = \sum_{k=1}^N a_{jk}$.   $\mathcal{G}$ is {\em $d$-regular} if $d_j=d$ for all $j=1, \ldots, N$.

\subsection{Properties of Metzler Matrices}
We use well-known properties of Metzler matrices, which we summarize in the following three propositions (see~\cite{BermanPlemmons1994} Theorem 6.2.3, \cite{farina2011positive} Theorem 11 and 17, and~\cite{meyer2000matrix} Ch. 8).

\begin{prop}\label{prop:Metzler} Let $K$ be a Metzler matrix.  Then,
\begin{enumerate}
\item $\lambda_{max}(K) \in \mathbb{R}$.
If $K$ is irreducible, $\lambda_{max}(K)$ has multiplicity one.
\item Let $w^T$ and $v$ be left and right eigenvectors corresponding to $\lambda_{max}(K)$. Then, $w, v \succeq \mathbf{0}$. If $K$ is irreducible, then $w, v \gg \mathbf{0}$, and every other eigenvector of $K$ has at least one negative entry.
\item Let $K_{min},K_{max}$ be irreducible Metzler matrices where \\ $K_{min} \prec K \prec K_{max}$, then 
\begin{equation*}
\lambda_{max}(K_{min}) < \lambda_{max}(K) < \lambda_{max}(K_{max}).
\end{equation*}
\end{enumerate}
\end{prop}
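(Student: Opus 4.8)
The plan is to reduce all three statements to the classical Perron--Frobenius theory for nonnegative matrices via a diagonal shift. Since $K$ is Metzler, its off-diagonal entries are nonnegative, so for any scalar $c \geq -\min_j k_{jj}$ the matrix $M := K + c\mathbb{I}$ is entrywise nonnegative. The shift leaves all eigenvectors unchanged and translates the spectrum, $\lambda(M) = \{\lambda_j + c \mid \lambda_j \in \lambda(K)\}$, and it preserves the off-diagonal zero/nonzero pattern, so $K$ is irreducible if and only if $M$ is. It therefore suffices to prove the corresponding Perron--Frobenius statements for $M$ and translate back, reading $\lambda_{max}(K)$ as the eigenvalue of largest real part, i.e. the one whose shift equals the Perron root $\rho(M)$. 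For statement (1), I would invoke Perron--Frobenius: $\rho(M)$ is itself an eigenvalue of $M$, hence real, and every eigenvalue $\mu$ of $M$ satisfies $\mathrm{Re}(\mu) \leq |\mu| \leq \rho(M)$. Translating by $-c$ shows that $\lambda_{max}(K) = \rho(M) - c \in \mathbb{R}$ is the unique eigenvalue of $K$ with largest real part. When $K$ is irreducible, $M$ is irreducible nonnegative, and the irreducible version of the theorem makes $\rho(M)$ a simple root of the characteristic polynomial; simplicity is shift-invariant, so $\lambda_{max}(K)$ has multiplicity one.

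For statement (2), the theorem provides right and left eigenvectors $v, w$ of $M$ for $\rho(M)$ that may be chosen componentwise nonnegative, and strictly positive when $M$ is irreducible; since eigenvectors are unaffected by the shift, the same vectors serve for $K$, giving $w,v \succeq \mathbf{0}$ in general and $w,v \gg \mathbf{0}$ in the irreducible case. To rule out any other nonnegative eigenvector when $K$ is irreducible, I would use a two-sided argument: if $Kv' = \lambda v'$ with $v' \succeq \mathbf{0}$, $v' \neq \mathbf{0}$, then pairing with the positive left eigenvector $w \gg \mathbf{0}$ of $\lambda_{max}(K)$ gives $\lambda_{max}(K)\, w^T v' = w^T K v' = \lambda\, w^T v'$, and since $w^T v' > 0$ this forces $\lambda = \lambda_{max}(K)$. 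As the eigenspace of the simple eigenvalue $\lambda_{max}(K)$ is spanned by $v \gg \mathbf{0}$, every eigenvector for a different eigenvalue must have an entry that is not nonnegative.

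For statement (3), the content is strict monotonicity of $\rho$ on irreducible nonnegative matrices. Pick one shift $c$ large enough that $M_{min} := K_{min}+c\mathbb{I}$, $M := K+c\mathbb{I}$, and $M_{max} := K_{max}+c\mathbb{I}$ are all nonnegative and irreducible (here $K$ inherits irreducibility from $K \succeq K_{min}$, since irreducibility depends only on the off-diagonal support). It then suffices to show $\rho(M_1) < \rho(M_2)$ whenever $\mathbf{0} \preceq M_1 \prec M_2$ with both irreducible. Let $v_1 \gg \mathbf{0}$ be the right Perron eigenvector of $M_1$ and $w_2 \gg \mathbf{0}$ the left Perron eigenvector of $M_2$. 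Since $M_2 - M_1 \succ \bar{\mathbf{0}}$ and $v_1 \gg \mathbf{0}$, we have $(M_2 - M_1)v_1 \succ \mathbf{0}$, and pairing with $w_2 \gg \mathbf{0}$ gives $\rho(M_2)\, w_2^T v_1 = w_2^T M_2 v_1 > w_2^T M_1 v_1 = \rho(M_1)\, w_2^T v_1$; as $w_2^T v_1 > 0$, we conclude $\rho(M_2) > \rho(M_1)$. Applying this to the pairs $(M_{min},M)$ and $(M,M_{max})$ and subtracting $c$ yields $\lambda_{max}(K_{min}) < \lambda_{max}(K) < \lambda_{max}(K_{max})$.

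I expect the main obstacle to be securing the strict inequalities in statement (3). Both irreducibility hypotheses are essential there: irreducibility of the smaller matrix gives $v_1 \gg \mathbf{0}$ so that $(M_2 - M_1)v_1$ is genuinely nonzero, while irreducibility of the larger gives $w_2 \gg \mathbf{0}$ so that the pairing is strictly positive; either strictness is lost if one only assumes $M_1 \preceq M_2$ or drops irreducibility. The shift reduction and the invocations of Perron--Frobenius in statements (1) and (2) are otherwise routine.
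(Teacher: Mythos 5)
The paper offers no proof of this proposition at all: it is presented as a summary of well-known facts with citations to Berman--Plemmons, Farina--Rinaldi, and Meyer. Your argument is correct and is essentially the standard one found in those references: shift by $c\mathbb{I}$ to reduce to a nonnegative matrix, invoke Perron--Frobenius, and translate back, with the two-sided pairing against a strictly positive left Perron vector doing the work both in (2), to rule out nonnegative eigenvectors for any other eigenvalue, and in (3), where your observation that $(M_2-M_1)v_1 \succ \mathbf{0}$ requires $v_1 \gg \mathbf{0}$ (hence irreducibility of the smaller matrix) correctly isolates where strictness comes from. One point worth making explicit: the paper defines $\lambda_{max}(K)$ as the eigenvalue of largest \emph{modulus}, whereas your proof (deliberately, as you state) treats it as the eigenvalue of largest real part, i.e.\ the spectral abscissa $\rho(K+c\mathbb{I})-c$. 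For Metzler matrices with strongly negative diagonals these differ --- for $K$ with diagonal $-10$ and a cyclic off-diagonal pattern the eigenvalue of largest modulus can even be non-real --- so the proposition as literally stated would be false under the paper's definition, and your reinterpretation is the one under which it holds and under which the paper actually uses $\lambda_{max}$ throughout (e.g.\ in Propositions 2 and 3 and in Lemma 1). So: no gap in your proof; if anything, you have repaired a definitional slip in the paper.
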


\begin{prop}\label{prop:MetzlerHurwitz} Let $K$ be a Metzler matrix.  Then, the following statements are equivalent:
\begin{enumerate}
\item K is Hurwitz.
\item There exists a vector $v \gg \mathbf{0}$ such that $Kv \ll \mathbf{0}$.
\item There exists a vector $w \gg \mathbf{0}$ such that $w^T K \ll \mathbf{0}$.
\end{enumerate}
\end{prop}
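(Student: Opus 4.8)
The plan is to prove $(1)\Leftrightarrow(2)$ directly and then obtain $(1)\Leftrightarrow(3)$ by applying that equivalence to the transpose $K^T$. The two ingredients I would lean on are Proposition~\ref{prop:Metzler} (the leading eigenvalue $\lambda_{max}(K)$ is real with a nonnegative left eigenvector) and the standard shift that writes any Metzler matrix as $K = B - s\mathbb{I}$, where $s>0$ is taken large enough that $B := K + s\mathbb{I} \succeq \bar{\mathbf{0}}$ is entrywise nonnegative. Under this shift, the eigenvalue of $K$ with largest real part is $\rho(B) - s$, so $K$ is Hurwitz if and only if $\rho(B) < s$; I will use this characterization in the $(1)\Rightarrow(2)$ direction.

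For $(2)\Rightarrow(1)$ I would assume $v\gg\mathbf{0}$ with $Kv\ll\mathbf{0}$, and let $w^T$ be a left eigenvector of $K$ for $\lambda_{max}(K)$, so that $w\succeq\mathbf{0}$, $w\neq\mathbf{0}$ by Proposition~\ref{prop:Metzler}. Left-multiplying the inequality $Kv\ll\mathbf{0}$ by $w^T$ gives $\lambda_{max}(K)\,w^Tv = w^T(Kv) < 0$, where the strict inequality holds because $w$ is nonnegative and nonzero while every entry of $Kv$ is strictly negative. Since $w^Tv>0$ (as $w\succeq\mathbf{0}$, $w\neq\mathbf{0}$, and $v\gg\mathbf{0}$), I conclude $\lambda_{max}(K)<0$, hence $K$ is Hurwitz.

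For $(1)\Rightarrow(2)$ I would exploit the shifted form: if $K$ is Hurwitz then $\rho(B/s)<1$, so the Neumann series yields $(-K)^{-1} = (s\mathbb{I}-B)^{-1} = \tfrac{1}{s}\sum_{n\geq 0}(B/s)^n$, a convergent sum of nonnegative matrices. Setting $v := (-K)^{-1}\mathbf{1}$ gives $Kv = -\mathbf{1}\ll\mathbf{0}$, and $v\gg\mathbf{0}$ because the $n=0$ term $\tfrac{1}{s}\mathbf{1}$ alone already makes every entry of $v$ strictly positive. I expect this to be the main obstacle, and the reason for preferring the Neumann-series construction over the obvious alternative: one cannot simply take $v$ to be the Perron right eigenvector of $K$, because when $K$ is reducible Proposition~\ref{prop:Metzler} only guarantees $v\succeq\mathbf{0}$, so that eigenvector may have zero entries and give $Kv=\lambda_{max}(K)v\preceq\mathbf{0}$ rather than the required $\ll\mathbf{0}$. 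The resolvent argument sidesteps irreducibility entirely.

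Finally, $(1)\Leftrightarrow(3)$ follows by transposition: $K^T$ is Metzler whenever $K$ is, $K$ and $K^T$ have the same spectrum (so one is Hurwitz exactly when the other is), and statement $(3)$ for $K$, namely $w^TK\ll\mathbf{0}$ with $w\gg\mathbf{0}$, is precisely statement $(2)$ for $K^T$, namely $K^Tw\ll\mathbf{0}$. Applying the already-established equivalence $(1)\Leftrightarrow(2)$ to $K^T$ then closes the argument and gives the full equivalence of $(1)$, $(2)$, and $(3)$.
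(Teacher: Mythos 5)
The paper does not prove this proposition at all: it is listed among the ``well-known properties of Metzler matrices'' and dispatched with citations to Berman--Plemmons, Farina--Rinaldi, and Meyer, so there is no in-paper argument to compare against. Your proof is correct and complete, and it is essentially the standard textbook argument from those references: the shift $K = B - s\mathbb{I}$ with $B \succeq \bar{\mathbf{0}}$, the left Perron eigenvector pairing for $(2)\Rightarrow(1)$, the Neumann-series inverse-positivity of $s\mathbb{I}-B$ for $(1)\Rightarrow(2)$ (with the correct observation that the Perron right eigenvector cannot be used directly in the reducible case), and transposition for $(3)$. The only point worth flagging is one the paper itself glosses over: its formal definition of $\lambda_{max}$ as the eigenvalue of largest \emph{modulus} does not, for a general Metzler matrix, coincide with the spectral abscissa, and your step ``$\lambda_{max}(K)<0$ hence $K$ is Hurwitz'' implicitly uses the (correct, standard, and clearly intended) reading of $\lambda_{max}$ as the rightmost eigenvalue $\rho(K+s\mathbb{I})-s$; a one-line remark to that effect would make the argument airtight against the paper's literal notation.
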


\begin{definition}[Regular Splitting]\label{def:regsplitting}
Let $K$ be a Metzler matrix. $K = T + U$ is a {\em regular splitting} of $K$ if $T\succeq \bar{\mathbf{0}}$ and $U$ is a Hurwitz Metzler matrix.

\begin{prop}~\label{prop:MetzlerRS}
Let $K$ be a Metzler matrix and let $K=T+U$ be a regular splitting. Then, 
\begin{enumerate}
\item $\lambda_{max}(K)<0$ if and only if $\rho(-TU^{-1})<1$.
\item $\lambda_{max}(K)=0$ if and only if $\rho(-TU^{-1})=1$.
\item $\lambda_{max}(K)>0$ if and only if $\rho(-TU^{-1})>1$.
\end{enumerate}
\end{prop}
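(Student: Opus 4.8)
The plan is to reduce the six implications to three forward statements and prove each constructively, leaning on Propositions~\ref{prop:Metzler} and~\ref{prop:MetzlerHurwitz}. First I would record the facts that make the object $-TU^{-1}$ well behaved. Since $U$ is a Hurwitz Metzler matrix it is invertible, and because $e^{Ut}\succeq\bar{\mathbf 0}$ for all $t\ge 0$ (a Metzler generator produces a nonnegative semigroup) while $U$ Hurwitz forces convergence, we get $-U^{-1}=\int_0^\infty e^{Ut}\,dt\succeq\bar{\mathbf 0}$. Hence $M:=-TU^{-1}=T(-U^{-1})\succeq\bar{\mathbf 0}$ is a nonnegative (in particular Metzler) matrix, so by Proposition~\ref{prop:Metzler} its spectral radius $\rho:=\rho(M)=\lambda_{max}(M)$ is a real eigenvalue with nonnegative left and right Perron eigenvectors. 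I would also use the factorization $K=(\mathbb{I}-M)U$, which follows from $(\mathbb{I}-M)U=U-MU=U+T$. Because $\lambda_{max}(K)\in\mathbb{R}$ (Proposition~\ref{prop:Metzler}) and $\rho\ge 0$, each side of the claimed equivalences splits into the three mutually exclusive, exhaustive cases $\{<,=,>\}$; thus it suffices to prove the three forward implications $\rho<1\Rightarrow\lambda_{max}(K)<0$, $\rho=1\Rightarrow\lambda_{max}(K)=0$, and $\rho>1\Rightarrow\lambda_{max}(K)>0$, since the reverse directions then follow by trichotomy.

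For $\rho<1$ I would build an explicit Hurwitz certificate for Proposition~\ref{prop:MetzlerHurwitz}. As $M\succeq\bar{\mathbf 0}$ with $\rho(M)<1$, the Neumann series gives $(\mathbb{I}-M)^{-1}=\sum_{k\ge 0}M^k\succeq\bar{\mathbf 0}$, and the factorization yields $-K^{-1}=(-U^{-1})(\mathbb{I}-M)^{-1}\succeq\bar{\mathbf 0}$. Fixing any $z\gg\mathbf 0$ and setting $v:=-K^{-1}z$, nonnegativity of $-K^{-1}$ gives $v\succeq\mathbf 0$, while invertibility of $-K^{-1}$ (no zero row) together with $z\gg\mathbf 0$ upgrades this to $v\gg\mathbf 0$; moreover $Kv=-z\ll\mathbf 0$. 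By Proposition~\ref{prop:MetzlerHurwitz} the Metzler matrix $K$ is Hurwitz, i.e. $\lambda_{max}(K)<0$.

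For $\rho>1$ I would exploit the Perron eigenstructure. Let $u\succeq\mathbf 0$, $u\ne\mathbf 0$, satisfy $Mu=\rho u$, and set $v:=-U^{-1}u\succeq\mathbf 0$; then $u=-Uv$ and $Tv=\rho u$, so $Kv=Tv+Uv=(\rho-1)u$. Pairing with a nonnegative left Perron eigenvector $w^{T}$ of $\lambda_{max}(K)$ gives $\lambda_{max}(K)\,(w^{T}v)=(\rho-1)\,(w^{T}u)$, and since $\rho-1>0$ this forces $\lambda_{max}(K)>0$ once the inner products are positive. For $\rho=1$ the factorization gives $\det K=\det(\mathbb{I}-M)\det U=0$, so $0\in\lambda(K)$ and $\lambda_{max}(K)\ge 0$; the complementary bound $\lambda_{max}(K)\le 0$ I would obtain by perturbation, replacing $U$ by $U-\epsilon\mathbb{I}$ (still Hurwitz Metzler), noting $-(U-\epsilon\mathbb{I})^{-1}=\int_0^\infty e^{-\epsilon t}e^{Ut}\,dt$ decreases entrywise in $\epsilon$, so $\rho(-T(U-\epsilon\mathbb{I})^{-1})<1$ and the already-proven case gives $\lambda_{max}(K-\epsilon\mathbb{I})=\lambda_{max}(K)-\epsilon<0$; letting $\epsilon\downarrow 0$ yields $\lambda_{max}(K)\le 0$.

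The main obstacle is the reducible case: the eigenvector pairing for $\rho>1$ needs $w^{T}u>0$ and $w^{T}v>0$, which is immediate when $K$ is irreducible (then $w\gg\mathbf 0$ by Proposition~\ref{prop:Metzler}) but can fail in general, and the same issue of strict positivity underlies the perturbation step. I would dispatch this either by approximating $K$ by irreducible Metzler matrices and using continuity of the spectrum and of $\rho(M)$, or, more cleanly, by replacing the last two paragraphs with a single monotonicity argument: define $h(\lambda)=\rho\big(T(\lambda\mathbb{I}-U)^{-1}\big)$ for $\lambda>\lambda_{max}(U)$, observe via $(\lambda\mathbb{I}-U)^{-1}=\int_0^\infty e^{-\lambda t}e^{Ut}\,dt$ that $h$ is continuous and (strictly, on the irreducible part) decreasing with $h(\lambda)\to\infty$ as $\lambda\downarrow\lambda_{max}(U)$ and $h(\lambda)\to 0$ as $\lambda\to\infty$, and check that $\lambda\in\lambda(K)$ corresponds to $1\in\lambda\big(T(\lambda\mathbb{I}-U)^{-1}\big)$, so that $\lambda_{max}(K)$ is the unique root of $h(\lambda)=1$; since $h(0)=\rho$, the sign of $\lambda_{max}(K)$ then equals the sign of $\rho-1$, delivering all three cases simultaneously.
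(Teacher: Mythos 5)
The paper never proves this proposition: it is stated as one of the ``well-known properties of Metzler matrices'' and attributed to the cited references (Berman--Plemmons, Theorem 6.2.3, among others), so there is no in-paper argument to compare against. Your write-up is a self-contained reconstruction of exactly the classical regular-splitting argument behind those citations: the facts $-U^{-1}=\int_0^\infty e^{Ut}\,dt\succeq\bar{\mathbf 0}$, $M:=-TU^{-1}\succeq\bar{\mathbf 0}$, and $K=(\mathbb{I}-M)U$, the Neumann-series certificate for $\rho(M)<1$, and the Perron pairing for $\rho(M)\ge 1$, with the trichotomy reduction correctly disposing of all six implications since $\lambda_{max}(K)$ and $\rho(M)$ are both real. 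The $\rho<1$ case is airtight as written. The soft spots are exactly the ones you flag: for $\rho>1$ the pairing requires $w^{T}u>0$ and $w^{T}v>0$, and for $\rho=1$ the perturbation step requires strict decrease of $\rho\bigl(-T(U-\epsilon\mathbb{I})^{-1}\bigr)$; neither is automatic when $K$ is reducible, and your $h(\lambda)$ fallback is the standard textbook repair but itself needs hedging (if $T=\bar{\mathbf 0}$ then $h\equiv 0$ and $h(\lambda)=1$ has no root, so one must fall back on the $\rho<1$ case there). A cleaner reducibility-free patch for the remaining directions: if $K$ were Hurwitz, Proposition~\ref{prop:MetzlerHurwitz} yields $w\gg\mathbf 0$ with $w^{T}K\ll\mathbf 0$, hence $w^{T}Kv<0$ for your $v\succ\mathbf 0$, contradicting $w^{T}Kv=(\rho-1)w^{T}u\ge 0$; this gives $\rho\ge 1\Rightarrow\lambda_{max}(K)\ge 0$ outright, and shifting $U$ to $U-\epsilon\mathbb{I}$ together with continuity of the spectral radius upgrades it to the strict statement for $\rho>1$. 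In any event, every invocation of this proposition in the paper has $T=B^{*}(p)$ irreducible and $U=-D$ diagonal, so the irreducible version you prove rigorously already covers all of the paper's uses.
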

\end{definition}

\subsection{Properties of Gradient Systems}
A gradient system on an open set $\Omega \subseteq \mathbb{R}^N$ is a system of the form $\dot{\zeta} = - \nabla V(\zeta)$ where $\zeta(t) \in \Omega$,  $V \in C^2(\Omega)$ is the potential function, and $\nabla V = [\partial V/\partial \zeta_1,\dots,\partial V/\partial \zeta_N]$ is the gradient of $V$ with respect to $\zeta$. The {\em level surfaces} of $V$ are the subsets $V_c = \{V^{-1}(c)\in \Omega  \, | \,c \in \mathbb{R}\}$. A point $\zeta_0 \in \Omega$ is a {\em regular point} if $\nabla V(\zeta_0) \neq \mathbf{0}$ and a {\em critical point} if $\nabla V(\zeta_0) = \mathbf{0}$. If $\nabla V(\zeta) \neq \mathbf{0}$ for all $\zeta \in V_c$, then $c$ is a {\em regular value} for $V$.
\begin{prop}[Properties of Gradient Systems~\cite{hirsch2012differential,tu2012dynamical}]\label{prop:gradSys}
Consider the gradient system $\dot{\zeta} = - \nabla V(\zeta)$ where $V \in C^2(\Omega)$, $\zeta(t) \in \Omega \subseteq \mathbb{R}^N$. 
Then,
\begin{enumerate}
    \item $V(\zeta)$ is a Lyapunov function of the gradient system. Moreover, $\dot{V}(\zeta)=0$ if and only if $\zeta$ is an equilibrium.
    \item The critical points of $V$ are the system equilibria. 
    \item If $c$ is a regular value for $V$, then the surface set $V_c$ forms an $N-1$ dimensional surface in $\Omega$ and the vector field is perpendicular to $V_c$.
    \item At every point $\zeta \in \Omega$, the directional derivative along $w \in \mathbb{R}^N$ is given by $D_w V(\zeta) = w^T \nabla V(\zeta)$.
    \item Let $\zeta_0$ be an $\alpha$-limit point or an $\omega$-limit point of a solution of the gradient system. Then $\zeta_0$ is an equilibrium.
    \item The linearized system at any equilibrium has only real eigenvalues. No periodic solutions are possible.
\end{enumerate}
\end{prop}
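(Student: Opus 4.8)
The plan is to derive all six properties from a single identity for $\dot V$ along trajectories, supplemented by standard facts from multivariable calculus and invariant-set theory. The cornerstone is the chain-rule computation
\begin{equation*}
\dot V(\zeta) = \nabla V(\zeta)^T \dot\zeta = -\nabla V(\zeta)^T \nabla V(\zeta) = -\norm{\nabla V(\zeta)}^2 \leq 0.
\end{equation*}
This immediately yields Property 1: $V$ is non-increasing along solutions, and $\dot V = 0$ holds exactly when $\nabla V = \mathbf{0}$, which is precisely the condition $\dot\zeta = -\nabla V = \mathbf{0}$ for an equilibrium. Property 2 is the same equivalence read as a statement about critical points. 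Property 4 is simply the standard directional-derivative formula for a $C^1$ function, $D_w V(\zeta) = \lim_{h\to 0} h^{-1}\bigl(V(\zeta + h w) - V(\zeta)\bigr) = w^T \nabla V(\zeta)$, obtained by differentiating $t \mapsto V(\zeta + t w)$ at $t = 0$.

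For Property 3 I would invoke the regular value (preimage) theorem: if $c$ is a regular value, then $\nabla V \neq \mathbf{0}$ everywhere on $V_c$, so the implicit function theorem realizes $V_c$ locally as the graph of a $C^1$ map, making it an embedded $(N-1)$-dimensional submanifold of $\Omega$. Perpendicularity follows because any curve $\gamma(t)$ lying in $V_c$ satisfies $V(\gamma(t)) \equiv c$; differentiating gives $\nabla V(\gamma)^T \dot\gamma = 0$, so $\nabla V$, and hence the vector field $-\nabla V$, is orthogonal to every tangent vector of $V_c$.

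Property 5 requires the most care and is the main obstacle. I would argue by a LaSalle-type invariance argument. Along a solution with compact orbit closure in $\Omega$, $V$ is non-increasing and bounded below, so $V(\zeta(t))$ converges to a limit $c^\ast$; by continuity $V \equiv c^\ast$ on the $\omega$-limit set. Since $\omega$-limit sets are invariant, the trajectory through any $\zeta_0$ in the limit set remains in it, so $V$ is constant along that trajectory and $\dot V \equiv 0$ there; by Property 1 this forces $\nabla V(\zeta_0) = \mathbf{0}$, i.e. $\zeta_0$ is an equilibrium. The $\alpha$-limit case follows by applying the same reasoning to the time-reversed flow $\dot\zeta = +\nabla V$, along which $V$ is non-decreasing. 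The subtlety to flag is that this requires the relevant orbit to have compact closure inside $\Omega$, so that the $\omega$- and $\alpha$-limit sets are nonempty and the convergence argument applies; in the paper's application the invariant region is chosen so that this holds.

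Finally, for Property 6 I would linearize: the Jacobian of $-\nabla V$ at an equilibrium $\zeta_0$ is $-\mathrm{Hess}\,V(\zeta_0)$, and since $V \in C^2$ the Hessian is symmetric by equality of mixed partials, so $-\mathrm{Hess}\,V(\zeta_0)$ is real symmetric and therefore has only real eigenvalues. To exclude periodic orbits, suppose a non-constant periodic solution of period $T$ existed; then $V(\zeta(T)) = V(\zeta(0))$, yet along any non-equilibrium arc $\dot V = -\norm{\nabla V}^2 < 0$ on a set of positive measure, forcing $V(\zeta(T)) < V(\zeta(0))$, a contradiction. Hence only constant solutions can be periodic.
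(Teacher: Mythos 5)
Your proof is correct; the paper states Proposition~4 without proof, presenting it as a collection of standard facts cited from the gradient-systems chapters of its references, and your derivation is precisely the textbook argument those references give (the chain-rule identity $\dot V=-\norm{\nabla V}^2$, the regular value theorem for the level sets, the LaSalle/invariance argument for limit points, and symmetry of the Hessian for the linearization). The only remark worth adding is that your compactness caveat in Property~5 is stronger than necessary: the hypothesis already supplies the limit point $\zeta_0$, and monotonicity of $V$ along the solution together with continuity of the flow near $\zeta_0$ is enough to conclude that $V$ is constant on the orbit through $\zeta_0$, whence $\nabla V(\zeta_0)=\mathbf{0}$.
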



\section{Network SIRI Model Dynamics}\label{sec:Model}
In this section we present the network SIRI model dynamics, which represent a contagious process with reinfection in a population of $N$ agents. Consider a strongly connected digraph $\mathcal{G = (V,E)}$ with adjacency matrix $A$, where each node in $\mathcal{V}$ represents an agent. The state of each agent $j$ is given by the random variable $X_j(t) \in \{S,I,R\}$, where $S$ is ``susceptible'', $I$ is ``infected'', and $R$ is ``recovered''. Let transitions between states for each agent be independent Poisson processes with rates defined as follows. Susceptible agent $j$ becomes infected through contact with infected neighbor $k$ at the rate {\color{black} $\beta_{jk} \geq 0$. We assume $\beta_{jk} =0$ if and only if $a_{jk}=0$. Infected agent $j$ recovers from the infection at the rate $\delta_j \geq 0$. Recovered agent $j$ becomes reinfected through contact with infected neighbor $k$ at the rate $\hat{\beta}_{jk} \geq 0$, where $\hat{\beta}_{jk} = 0$ if $a_{jk} = 0$.}
These transitions are summarized as
\begin{center}
\schemestart $S_j+I_k$\arrow{->[$\beta_{jk}$]}$I_j+I_k$\arrow{<-[$\hat{\beta}_{jk}$]}$R_j+I_k$\schemestop \par
\end{center}
\begin{center}
\schemestart $I_j$\arrow{->[$\delta_j$]}$R_j$.\schemestop \par
\end{center}
The dynamics are described by a continuous-time Markov chain, where the probability that an agent transitions state at time $t$
can depend on the state of its neighbors at time $t$. Thus, the dimension of the state space can be as large as $3^N$.

To reduce the size of the state space, we use an {\color{black} individual-based mean-field approach (IBMF)~\cite{pastor2015epidemic,van2009virus}}. This approach assumes that the state of every node is statistically independent from the state of its neighbors. The approximation reduces the state of every agent $j$ to the probabilities $p_j^S(t)$, $p_j^I(t)$, and $p_j^R(t)$ of agent $j$ being in state $S$, $I$, and $R$, respectively, at time $t \geq 0$. Since at every time $t\geq 0$, these probabilities sum to 1, the state of every agent $j$ evolves on the 2-simplex $\Delta := \{(p_j^S,p_j^I,p_j^R) \in [0,1]^3 | \ p_j^S+p_j^I+p_j^R = 1 \}$. The reduced state space corresponds to $N$ copies of $\Delta$, denoted $\Delta_N$, which has dimension $2N$. 

The dynamics retain the full topological structure of the network encoded in the infection and reinfection rates $\beta_{jk}$ and $\hat{\beta}_{jk}$, which depend on the entries of the adjacency matrix $A$. We refer the reader to~\cite{van2013inhomogeneous} for a detailed derivation of the individual mean-field approximation for the SIS model, and to~\cite{van2015accuracy,gleeson2012accuracy} for a discussion and numerical exploration of the accuracy of mean-field approximations in network dynamics. 

Under the individual mean-field approximation, the dynamics of the network SIRI model on {\color{black}$\Delta_N$} are given by
\begin{align} \label{eq:3N_Model}
\dot{p}_j^S =& - p_j^S \sum_{k=1}^N \beta_{jk} p_k^I \nonumber \\
\dot{p}_j^I =& -\delta_j p_j^I + p_j^S \sum_{k=1}^N \beta_{jk} p_k^I + p_j^R \sum_{k=1}^N \hat{\beta}_{jk}  p_k^I \nonumber \\
\dot{p}_j^R =& - p_j^R \sum_{k=1}^N \hat{\beta}_{jk} p_k^I + \delta_j p_k^I,
\end{align}
{\color{black}where $p_j^S(0) + p_j^I(0) + p_j^R(0) = 1$ for all $j$.} 

We can reduce the number of equations from $3N$ to $2N$ by using the substitution $p_j^R = 1 - p_j^S - p_j^I$, {\color{black} for all $j$}, in~\eqref{eq:3N_Model}:
\begin{align}\label{eq:pj}
\dot{p}_j^S =& - p_j^S \sum_{k=1}^N \beta_{jk} p_k^I  \\
\dot{p}_j^I =& \sum_{k=1}^N \big((1-p_j^S) \hat{\beta}_{jk} + p_j^S  \beta_{jk} \big)p_k^I -\delta_j p_j^I -p_j^I \sum_{j=1}^N \hat{\beta}_{jk} p_k^I, \nonumber 
\end{align}
{\color{black}where $p_j^S(0) + p_j^I(0) = 1 - p_j^R(0)$ for all $j$.}

The dynamics can be written in matrix form where $p^\Omega = [p_1^\Omega,\cdots,p_N^\Omega]^T$ and $P^\Omega= \textrm{diag}(p^\Omega)$ for $\Omega \in \{S,I\}$: 
\begin{align}\label{eq:Het-SIRI}
\dot{p}^S &=  - P^S B p^I \nonumber \\
\dot{p}^I &=  \big(B^*(p^S) - D \big) p^I - P^I \hat{B} p^I,
\end{align}
where
\begin{equation*}
    B^*(p^S) = (\mathbb{I}-P^{S})\hat{B} + P^{S} B
    \label{eq:B*}
\end{equation*}
and
\begin{align*}
    B &= \{\beta_{jk}\} \succ \mathbf{\bar{0}}  \;\;\;\; &(\textrm{infection matrix}), \\
    \hat{B}&=\{\hat{\beta}_{jk}\} \succeq \mathbf{\bar{0}}  \;\;\;\; &(\textrm{reinfection matrix}), \\
    D &= \textrm{diag}(\delta_1, \ldots, \delta_N) \succeq \mathbf{\bar{0}}  \;\;\;\; &(\textrm{recovery matrix}).
\end{align*}
Further, we define
\begin{equation}
 \bar{B}_{max} = [\max(\beta_{jk},\hat{\beta}_{jk})], \;\;\; \bar{B}_{min} = [\min(\beta_{jk},\hat{\beta}_{jk})].
 \label{eq:Bmaxmin}
\end{equation}

The network SIRI model dynamics provide sufficient richness to describe a family of models which can be classified into six different cases (summarized in Table~\ref{tab:SIRI_SpecialCases}):

\begin{itemize}
\item Case 1 (SI): When $D=\bar{\mathbf{0}}$ the network SIRI model specializes to the network SI model.
\item Case 2 (SIR): When $\hat{B}=\bar{\mathbf{0}}$, the network SIRI model specializes to the network SIR model.
\item Case 3 (SIS): When $B=\hat{B}$ the network SIRI model specializes to the network SIS model with $p^S+p^R \mapsto p^S$.
\item Case 4 (Partial Immunity): When ${B} \succ \hat{B} \succ \bar{\mathbf{0}}$, every recovered agent acquires partial (or no) immunity to each of its infected neighbors.
\item Case 5 (Compromised Immunity): When $\hat{B} \succ B \succ \bar{\mathbf{0}}$, every recovered agent acquires compromised (or no) immunity to each of its infected neighbors. 
\item Case 6 (Mixed Immunity): Models not in Cases 1-5. Notably, there is at least one pair of edges $(j,k)$ and $(l,m)$ such that $\beta_{jk} \geq \hat{\beta}_{jk}$ and $\beta_{lm} < \hat{\beta}_{lm}$. We classify mixed immunity into two sub-cases: 
\begin{itemize}
    \item Case 6a (Weak Mixed Immunity): For every agent $j$, $\beta_{jk} - \hat{\beta}_{jk} \geq 0$ for all $k \in \mathcal{N}_j$ or $\beta_{jk} - \hat{\beta}_{jk} \leq 0$ for all $k \in \mathcal{N}_j$. 
    \item Case 6b (Strong Mixed Immunity): Mixed immunity that is not weak.
\end{itemize}
\end{itemize}

\begin{table}[!t]
\caption{Network SIRI model cases}
\label{tab:SIRI_SpecialCases}
\begin{center}
\begin{tabular}{c|c|c}
\textbf{Case} & \textbf{Parameter Value} & \textbf{Equivalent Model}\\
\hline
1 & $D= \bar{\mathbf{0}}$ & SI\\
2 & $\hat{B}=\bar{\mathbf{0}}$ & SIR\\
3 & $B=\hat{B}$ & SIS\\
4 & $B \succ \hat{B} \succ \bar{\mathbf{0}}$ & Partial Immunity\\
5 & $\hat{B} \succ B \succ \bar{\mathbf{0}}$ & Compromised Immunity\\
6 & Otherwise & Mixed Immunity 
\end{tabular}
\end{center}
\end{table}

\section{Equilibria and Reproduction Numbers}\label{sec:EqlAndRepNum}

In this section we analyze the equilibria of the network SIRI model dynamics and define the notion of basic and extreme basic reproduction numbers. 
We denote the value of $p^S$ and $p^I$ at equilibrium as $p^{S*}$ and $p^{I*}$, respectively. 

\subsection{Equilibria}
\begin{prop}\label{prop:equilibria}
{\color{black} The only equilibria of the network SIRI model (3) are an invariant set of infection-free equilibria (IFE)  and one or more isolated endemic equilibria (EE). The IFE set is defined as $\mathcal{M} = \{(p^{S*},\mathbf{0}) \in \Delta_N | \ \mathbf{0} \preceq p^{S*} \preceq \mathbf{1} \}$, corresponding to all equilibria in which $p^{I*} = \mathbf{0}$, i.e., $p^{S*} + p^{R*}=\mathbf{1}$. The EE are defined as equilibria where $p^{I*} \succ \mathbf{0}$ satisfies
\begin{equation}\label{eq:EE_j}
p^{I*}_j = \frac{ \sum_{k=1}^N \hat{\beta}_{jk} p_k^{I*}}{\delta_j + \sum_{k=1}^N \hat{\beta}_{jk} p_k^{I*}}.
\end{equation}
  If $\hat{B}$ is irreducible then, for every EE, $p^{S*} = \mathbf{0}$ and $p^{I*} \gg \mathbf{0}$.}
\end{prop}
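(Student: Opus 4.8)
The plan is to work directly from the two algebraic equilibrium conditions $\dot p^S=\mathbf 0$ and $\dot p^I=\mathbf 0$ for \eqref{eq:Het-SIRI}, splitting on whether $p^{I*}=\mathbf 0$ or $p^{I*}\succ\mathbf 0$, a dichotomy that is exhaustive because $p^{I*}\in[0,1]^N$. From the $j$-th row of $\dot p^S=-P^SBp^I=\mathbf 0$, namely $p_j^{S*}\sum_k\beta_{jk}p_k^{I*}=0$, I read off the pointwise alternative that, for each $j$, either $p_j^{S*}=0$ or $\sum_k\beta_{jk}p_k^{I*}=0$. The infection-free case is then immediate: when $p^{I*}=\mathbf 0$, every term on the right-hand side of both lines of \eqref{eq:Het-SIRI} carries a factor $p^I_k$ and vanishes, so every $(p^{S*},\mathbf 0)$ with $\mathbf 0\preceq p^{S*}\preceq\mathbf 1$ (equivalently $p^{R*}=\mathbf 1-p^{S*}\succeq\mathbf 0$) is an equilibrium; this is exactly $\mathcal M$, and since each of its points is fixed it is an invariant set.

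For the endemic case $p^{I*}\succ\mathbf 0$ I would expand the $j$-th component of $\dot p^I=\mathbf 0$ and isolate the first-infection term $p_j^{S*}\sum_k\beta_{jk}p_k^{I*}$, which is zero by the $\dot p^S$ condition. Writing $\sigma_j:=\sum_k\hat\beta_{jk}p_k^{I*}$, what remains is $(1-p_j^{S*})\sigma_j=p_j^{I*}(\delta_j+\sigma_j)$. The crux is the identity $p_j^{S*}\sigma_j=0$ for every $j$: if $p_j^{S*}>0$, the $\dot p^S$ alternative forces $p_k^{I*}=0$ for all $\mathcal G$-neighbors $k$ of $j$, and since $\hat\beta_{jk}\neq 0$ requires $a_{jk}\neq 0$, every term of $\sigma_j$ vanishes, so $\sigma_j=0$. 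Replacing $(1-p_j^{S*})\sigma_j$ by $\sigma_j$ then yields \eqref{eq:EE_j} (read with $\delta_j>0$, the relevant regime; the degenerate entry $\delta_j=\sigma_j=0$ leaves only the vacuous $0=0$). As every equilibrium is either an IFE or satisfies this, these two families exhaust all equilibria.

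For the irreducible case, let $P=\{j:p_j^{I*}>0\}$ and $Z=\{j:p_j^{I*}=0\}$, with $P\neq\emptyset$. For $j\in Z$, substituting $p_j^{I*}=0$ into the $j$-th component of $\dot p^I=\mathbf 0$ leaves $\sum_k\big((1-p_j^{S*})\hat\beta_{jk}+p_j^{S*}\beta_{jk}\big)p_k^{I*}=0$, a sum of nonnegative terms, so each vanishes; for any $k\in P$ this forces $(1-p_j^{S*})\hat\beta_{jk}=0$ and $p_j^{S*}\beta_{jk}=0$, and whether $p_j^{S*}<1$ or $p_j^{S*}=1$ one obtains $\hat\beta_{jk}=0$ (in the latter via $\beta_{jk}=0\Rightarrow a_{jk}=0\Rightarrow\hat\beta_{jk}=0$). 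Thus there is no $\hat B$-edge from $Z$ into $P$, contradicting strong connectivity of the $\hat B$-graph unless $Z=\emptyset$; hence $p^{I*}\gg\mathbf 0$. Then $\sum_k\beta_{jk}p_k^{I*}>0$ for every $j$ (each node has a $\mathcal G$-out-neighbor and $p^{I*}\gg\mathbf 0$), so the $\dot p^S$ alternative forces $p_j^{S*}=0$, i.e. $p^{S*}=\mathbf 0$. With $p^{S*}=\mathbf 0$ pinned down, \eqref{eq:EE_j} reduces to a (non-degenerate) fixed-point system in $p^{I*}$ alone, so each EE is an isolated point in contrast with the continuum $\mathcal M$; existence of at least one EE is then tied to the reproduction-number conditions developed later.

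I expect the irreducibility step to be the main obstacle: converting the sign and support structure of the equilibrium equations into the graph-theoretic ``no edge from $Z$ to $P$'' statement, and correctly disposing of the subcase $p_j^{S*}=1$ together with the distinct supports of $A$, $B$, and $\hat B$. Once the identity $p_j^{S*}\sigma_j=0$ is spotted, the reduction to \eqref{eq:EE_j} is routine bookkeeping.
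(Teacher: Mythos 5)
Your proposal is correct and follows essentially the same route as the paper's proof: extract the pointwise alternative from $\dot p^S=-P^SBp^I=\mathbf 0$, use the support condition $\hat\beta_{jk}=0$ whenever $a_{jk}=0$ to kill the term $P^{S*}\hat Bp^{I*}$ (your identity $p_j^{S*}\sigma_j=0$), reduce $\dot p^I=\mathbf 0$ to the fixed-point relation \eqref{eq:EE_j}, and invoke strong connectivity of the $\hat B$-graph to propagate positivity of $p^{I*}$ and force $p^{S*}=\mathbf 0$. Your partition-based ``no $\hat B$-edge from $Z$ into $P$'' argument is just the contrapositive of the paper's recursive positivity propagation, with slightly more careful bookkeeping of the subcase $p_j^{S*}=1$.
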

\begin{proof}
Setting $\dot{p}^S=\mathbf{0}$ in~\eqref{eq:Het-SIRI}, we get $P^{S*} B p^{I*}=\mathbf{0}$. Since $\mathcal{G}$ is strongly connected and $B$ preserves the connectivity of $A$, then for every agent $j$ we must have $p_j^{S*} = 0$ or $\sum_{\mathcal{N}_j} p_k^{I*}=0$. Moreover, since $\hat{B}$ has a zero at every entry where $B$ has a zero, it follows that $P^{S*} \hat{B} p^{I*}=\mathbf{0}$.

Setting $\dot{p}^I=\mathbf{0}$ in~\eqref{eq:Het-SIRI} and using $P^{S*} B p^{I*} = P^{S*} \hat{B} p^{I*}=\mathbf{0}$ we get
\begin{equation} \label{eq:EqPtpI}
\mathbf{0} =(\hat{B}-D - P^{I*}\hat{B}) p^{I*} =(\hat{B} - D - \textrm{diag}(\hat{B}p^{I*}))p^{I*}.
\end{equation}
One solution is the invariant set  $\mathcal{M} = \{(p^{S*},\mathbf{0}) \in \Delta_N | \ \mathbf{0} \preceq p^{S*} \preceq \mathbf{1} \}$. The only other solutions are isolated equilibria  $p^{I*} \succ \mathbf{0}$ satisfying \eqref{eq:EE_j}.

If $\hat{B}$ is irreducible, $\beta_{jk}>0$ for any $(j,k) \in \mathcal{E}$, and if $p_k^{I*} > 0$ for any $k \in \mathcal{V}$, then by~\eqref{eq:EE_j}  $p_j^{I*} > 0$ for any $j$ where $k \in \mathcal{N}_j$. So $p_i^{I*}>0$ for any $i$ where $j \in \mathcal{N}_i$. This argument can be recursively applied until all nodes in $\mathcal{G}$ are covered. Since $P^{S*} B p^{I*}=\mathbf{0}$, $p^{I*} \gg \mathbf{0}$ implies $p^{S*}=\mathbf{0}$.
\end{proof}

{\color{black}
\begin{prop}\label{def:MboundaryAndInt}
The boundary of $\mathcal{M}$ is $\partial\mathcal{M} = \{x = (p^{S*},\mathbf{0})\in \mathcal{M}\, | \, \exists j, \, p_j^{S*} \in \{0,1\} \}$. The corner set of $\mathcal{M}$ is $\hat{\mathcal{M}} = \{x = (p^{S*}, \mathbf{0}) \in \partial \mathcal{M} \, | \, p_j^{S*} \in \{0,1\}, \, \forall j \}$. The interior of $\mathcal{M}$ is $\textrm{int}(\mathcal{M}) = \mathcal{M} \setminus \partial \mathcal{M}$.
\end{prop}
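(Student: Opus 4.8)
The plan is to recognize $\mathcal{M}$ as an affine copy of the $N$-dimensional unit cube and then read off its relative interior, boundary, and corners from the standard description of a product of intervals. The one point requiring care is the ambient space with respect to which ``interior'' and ``boundary'' are taken: since $\mathcal{M}$ is $N$-dimensional while $\Delta_N$ is $2N$-dimensional, $\mathcal{M}$ has empty interior in $\Delta_N$, so its topological boundary relative to $\Delta_N$ would be all of $\mathcal{M}$. The claimed identities must therefore be understood relative to the affine hull of $\mathcal{M}$ (equivalently, treating $\mathcal{M}$ as a manifold with corners). Fixing this convention is the main conceptual step; after that the argument is routine.

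First I would construct the affine map $\phi:\mathcal{M}\to[0,1]^N$ given by $\phi((p^{S*},\mathbf{0}))=p^{S*}$. On $\mathcal{M}$ the infected coordinates are pinned at $p^I=\mathbf{0}$, and the simplex constraint fixes $p_j^{R*}=1-p_j^{S*}$, so each $p_j^{S*}$ ranges freely and independently over $[0,1]$. Hence $\phi$ is a bijection onto $[0,1]^N$ that extends to an affine isomorphism of the affine hull of $\mathcal{M}$ onto $\mathbb{R}^N$; in particular $\phi$ is a homeomorphism for the relative topology and therefore preserves relative interior, relative boundary, and extreme points.

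Next I would invoke the elementary fact that the relative interior of $[0,1]^N$ in $\mathbb{R}^N$ is the open cube $(0,1)^N$, so its relative boundary is $[0,1]^N\setminus(0,1)^N=\{x\in[0,1]^N\mid x_j\in\{0,1\}\ \text{for some }j\}$, and its vertex set is $\{x\in[0,1]^N\mid x_j\in\{0,1\}\ \forall j\}$. Pulling these back through $\phi^{-1}$ yields $\textrm{int}(\mathcal{M})=\{(p^{S*},\mathbf{0})\in\mathcal{M}\mid p_j^{S*}\in(0,1)\ \forall j\}$, the stated $\partial\mathcal{M}=\{(p^{S*},\mathbf{0})\in\mathcal{M}\mid \exists j,\ p_j^{S*}\in\{0,1\}\}$, and the corner set $\hat{\mathcal{M}}$. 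The identity $\textrm{int}(\mathcal{M})=\mathcal{M}\setminus\partial\mathcal{M}$ is then immediate, since $(0,1)^N$ and its complement in $[0,1]^N$ partition the cube.

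I expect no substantive obstacle beyond declaring the correct ambient space; the remaining work is the one-line verification that the coordinate projection $\phi$ is an affine homeomorphism together with the standard interior/boundary computation for a product of intervals. Consequently this proposition is essentially a formalization of notation, and the proof should be short.
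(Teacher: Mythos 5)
Your proof is correct and follows the same route the paper intends: the paper's own proof is the single line ``follows from the definition of $\mathcal{M}$,'' and your identification of $\mathcal{M}$ with the cube $[0,1]^N$ via the projection onto $p^{S*}$ is exactly the unpacking of that statement. Your explicit observation that interior and boundary must be taken relative to the affine hull of $\mathcal{M}$ (since $\mathcal{M}$ has empty interior in $\Delta_N \subset \mathbb{R}^{2N}$) is a point the paper leaves implicit, and is worth making.
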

\begin{proof}
The proof follows from the definition of $\mathcal{M}$.
\end{proof}
}
\begin{remark}\label{remark:SIRI_SIS_equivalence}
{\color{black} For $\hat B$ irreducible, the} equilibria of the network SIRI model are equivalent to the equilibria of the network SIS model (Case 3), where $B = \hat{B}$. This follows since any equilibrium of~\eqref{eq:Het-SIRI} satisfies~\eqref{eq:EqPtpI}, and therefore is also an equilibrium of the network SIS dynamics~\cite{lajmanovich1976,van2013inhomogeneous,fall2007epidemiological}: 
\begin{equation}\label{eq:SIS}
\dot{p}^I = (B - D) p^I -  P^I B p^I.
\end{equation}
For the network SI model (Case 1), the only equilibrium is a unique EE with $p^{I*} = \mathbf{1}$ and $p^{S*} = \mathbf{0}$. For the network SIR model (Case 2), the only equilibria are the IFE set $\mathcal{M}$.
\end{remark}

\begin{remark}
For initial conditions $p^S(0) = \mathbf{1}- p^I(0)$, the network SIRI dynamics~\eqref{eq:Het-SIRI} initially behave as the network SIS model~\eqref{eq:SIS} with infection matrix $B$. As agents become exposed to the infection for the first time, the dynamics transition to network SIS dynamics with infection matrix $\hat B$. 
\end{remark}

In the remainder of this paper, we assume $D $ is nonsingular and  $\hat B$ is irreducible; thus every EE is {\em strong} since $p^{I*} \gg \mathbf{0}$. The generalization to reducible $\hat{B}$ is straightforward.\footnote{The graph $\mathcal{G}_{\hat B}$ with reducible adjacency matrix $\hat{B}$  is weakly connected or disconnected. If $\mathcal{G}_{\hat B}$ is weakly connected, the adjacency matrix of $\mathcal{G}_{\hat B}$ can be written as an upper block triangular matrix with $K$ diagonal irreducible blocks that describe the $K$ strongly connected subgraphs of $\mathcal{G}$~\cite{khanafer2016}. If $\mathcal{G}_{\hat B}$ is disconnected, it is sufficient to study each connected subgraph of $\mathcal{G}_{\hat B}$.}

\subsection{Basic Reproduction Numbers}\label{sec:BasicRepNnum}
{\color{black}The basic reproduction number is a key concept in epidemiology, defined as the expected number of new cases of infection caused by a typical infected individual in a population of susceptible individuals~\cite{anderson1992infectious,diekmann1990definition,van2002reproduction}. For many deterministic epidemiological models, an infection can invade and persist in a fully susceptible population if and only if $R_0 > 1$~\cite{hethcote2000mathematics}. For the SIS model in a well-mixed population in which all agents have the same infection rate $\beta$ and recovery rate $\delta$, the basic reproduction number, $R_0 = \beta/\delta$, governs the steady-state behavior of solutions. If $R_0 \leq 1$, the infection eventually dies out. If $R_0>1$, the infection spreads through the population, converging to an endemic steady-state solution with a fixed fraction of the population in the infected state. In the network SIS model (7), the basic reproduction number is ${R_0 = \rho(B \Gamma^{-1})}$~\cite{van2002reproduction,fall2007epidemiological, kamgang2008computation}. In this case, if $R_0 \leq 1$, solutions reach the IFE set $\mathcal{M}$ as $t \to \infty$ while if $R_0 > 1$, solutions reach the unique EE~\eqref{eq:EE_j} as $t \to \infty$~\cite{lajmanovich1976,fall2007epidemiological}.}

In previous work~\cite{pagliaraSIRIWM} we proved that, in well-mixed settings, the transient and steady-state behavior of solutions in the SIRI model depend on two numbers $R_0$ and $R_1$, corresponding to the basic reproduction number for a population of susceptible individuals and for a population of recovered individuals, respectively. Here we extend the definition of $R_0$ and $R_1$ in~\cite{pagliaraSIRIWM} to network topologies and introduce the notion of extreme basic reproduction numbers.

{\color{black}
\begin{definition}[Basic and Extreme Basic Reproduction Numbers]\label{def:R}
Consider the following system, which is a modification to dynamics~(\ref{eq:Het-SIRI}):
\begin{align}\label{eq:Het-SIRI-mod}
\dot{p}^S &=  - P^S B p^I \nonumber \\
\dot{p}^I &=  \big(B^*(p) - D \big) p^I - P^I \hat{B} p^I,
\end{align}
where
\begin{equation*}
    B^*(p) = (\mathbb{I}-P)\hat{B} + P B,
\end{equation*}
$\mathbf{0} \preceq p \preceq \mathbf{1}$ is constant, and $P= \textrm{diag}(p)$.  Let $R(p)$ be the basic reproduction number for~(\ref{eq:Het-SIRI-mod}). We distinguish $R(p)$ for four different values of $p$ as follows:
\begin{itemize}
    \item {\em Basic infection reproduction number} $R_0$ is the reproduction number for (\ref{eq:Het-SIRI-mod}) with $p =  \mathbf{1}$.
    \item {\em Basic reinfection reproduction number} $R_1$ is the reproduction number for (\ref{eq:Het-SIRI-mod}) with $p = \mathbf{0}$.
    \item {\em Maximum basic reproduction number} $R_{max}$ is the reproduction number for (\ref{eq:Het-SIRI-mod}) with $p = \argmax_{\mathbf{0} \preceq q \preceq \mathbf{1}} R(q)$.
    \item {\em Minimum basic reproduction number} $R_{min}$ is the reproduction number for (\ref{eq:Het-SIRI-mod}) with $p = \argmin_{\mathbf{0} \preceq q \preceq \mathbf{1}} R(q)$.
\end{itemize}
\end{definition}

\begin{prop}[Spectral Radius Formulas for Reproduction Numbers]
The basic reproduction number $R(p)$ of~(\ref{eq:Het-SIRI-mod}) for $\mathbf{0} \preceq p \preceq \mathbf{1}$ can be computed as
\[
R(p) = \rho(B^*(p) D^{-1}).
\]
Therefore,
\[
R_0  =  \rho(BD^{-1}), \;
R_1 =  \rho(\hat{B}D^{-1}), 
\]
\begin{align*}
R_{max} =  \max_{\mathbf{0} \preceq p \preceq \mathbf{1}} \rho(B^*(p)D^{-1}), \;
R_{min} =  \min_{\mathbf{0} \preceq p \preceq \mathbf{1}} \rho(B^*(p)D^{-1}).
\end{align*}
\end{prop}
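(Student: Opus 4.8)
The plan is to compute $R(p)$ via the next-generation matrix method of van den Driessche and Watmough~\cite{van2002reproduction}, which is the notion of basic reproduction number invoked in Definition~\ref{def:R}. First I would linearize the modified dynamics~\eqref{eq:Het-SIRI-mod} about an arbitrary infection-free equilibrium $(p^{S*},\mathbf{0}) \in \mathcal{M}$, treating $p$ (hence $P$ and $B^*(p)$) as a fixed parameter. Since $P^I = \textrm{diag}(p^I)$, the term $-P^I\hat{B}p^I$ is quadratic in $p^I$ and its derivative vanishes at $p^I = \mathbf{0}$; moreover neither term of $\dot{p}^I$ depends on $p^S$, because $B^*(p)$ uses the frozen parameter $p$ rather than the state $p^S$. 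Hence the Jacobian at $(p^{S*},\mathbf{0})$ is block upper-triangular, with a zero block along the $p^S$ directions and an infected block equal to $B^*(p) - D$. I would then read off the next-generation decomposition $B^*(p) - D = F - V$ with transmission part $F = B^*(p)$ and transition part $V = D$, so that the next-generation matrix is $FV^{-1} = B^*(p)D^{-1}$ and $R(p) = \rho(B^*(p)D^{-1})$.

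Second, I would verify the hypotheses that legitimize this decomposition. Because $\mathbf{0} \preceq p \preceq \mathbf{1}$, both diagonal matrices $P$ and $\mathbb{I}-P$ are nonnegative, and since $B \succ \bar{\mathbf{0}}$ and $\hat{B} \succeq \bar{\mathbf{0}}$ we get $F = B^*(p) = (\mathbb{I}-P)\hat{B} + PB \succeq \bar{\mathbf{0}}$, as required of a transmission matrix. The transition matrix $V = D$ is diagonal with strictly positive entries (by the standing assumption that $D$ is nonsingular), hence a nonsingular M-matrix with $D^{-1} \succeq \bar{\mathbf{0}}$. With these verified, $R(p) = \rho(FV^{-1}) = \rho(B^*(p)D^{-1})$ follows. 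The four named quantities are then immediate specializations: $p = \mathbf{1}$ gives $P = \mathbb{I}$ and $B^*(\mathbf{1}) = B$, so $R_0 = \rho(BD^{-1})$; $p = \mathbf{0}$ gives $P = \bar{\mathbf{0}}$ and $B^*(\mathbf{0}) = \hat{B}$, so $R_1 = \rho(\hat{B}D^{-1})$; and substituting $R(q) = \rho(B^*(q)D^{-1})$ into the definitions of $R_{max}$ and $R_{min}$ as the extremizers of $R(q)$ over $\mathbf{0} \preceq q \preceq \mathbf{1}$ yields the stated max/min formulas.

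The step I expect to be the main obstacle is justifying the next-generation theorem at a non-isolated disease-free state: the infection-free equilibria form the continuum $\mathcal{M}$ and the $p^S$ dynamics are only neutrally stable, so the standard hypothesis of an isolated, locally asymptotically stable disease-free equilibrium fails. I would resolve this by exploiting the block upper-triangular structure above, which spectrally decouples the infected block from the neutral $p^S$ directions, so that the threshold behavior is governed entirely by $B^*(p) - D$. To make the correspondence self-contained within the paper, I would then invoke Proposition~\ref{prop:MetzlerRS}: writing $B^*(p) - D = T + U$ with $T = B^*(p) \succeq \bar{\mathbf{0}}$ and $U = -D$ a Hurwitz Metzler matrix gives a regular splitting with $-TU^{-1} = B^*(p)D^{-1}$, so that $\lambda_{max}(B^*(p)-D)$ is negative, zero, or positive exactly as $\rho(B^*(p)D^{-1})$ is below, equal to, or above $1$. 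This is precisely the threshold-at-one property that characterizes $\rho(B^*(p)D^{-1})$ as the basic reproduction number of~\eqref{eq:Het-SIRI-mod}.
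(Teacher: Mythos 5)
Your proposal is correct and follows essentially the same route as the paper: the paper likewise takes the linear part of the $p^I$ dynamics, $K = B^*(p)-D$, forms the regular splitting $T = B^*(p)$, $U = -D$, and reads off $R(p) = \rho(-TU^{-1}) = \rho(B^*(p)D^{-1})$ (citing the Kamgang--Sallet computation, which is the same next-generation decomposition you use). Your additional care about the nonnegativity hypotheses and the non-isolated disease-free set is sound but goes beyond what the paper's one-paragraph proof records.
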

\begin{proof}
The linear term of the dynamics of $p^I$ in~\eqref{eq:Het-SIRI-mod} is $K = B^*(p) - D$. Thus, $K$ is Metzler and by Definition~\ref{def:regsplitting} $K = T + U$ is a regular splitting where $T = B^*(p)$ and $U = -D$. Following~\cite{kamgang2008computation}, $R(p)= \rho(-TU^{-1}) =  \rho(B^*(p) D^{-1})$. 
\end{proof}


}



\begin{prop}[Reproduction number ordering] \label{prop:RmaxRmin}
Let  $\bar{R}_{max} = \rho(\bar{B}_{max}D^{-1})$ and $\bar{R}_{min} = \rho(\bar{B}_{min}D^{-1})$. Then,
\begin{equation}\label{eq:Rbounds}
    \bar{R}_{min} \leq R_{min} \leq \lambda_{max}(B^*(p^{S})D^{-1}) \leq R_{max} \leq \bar{R}_{max}
\end{equation}
for any $\mathbf{0} \preceq p^{S} \preceq \mathbf{1}$. If $B \succeq \hat{B}$, then $ \bar{R}_{max} = R_{max} = R_0$ and $\bar{R}_{min} = R_{min} = R_1$. If $\hat{B} \succeq B$, then $\bar{R}_{max} = R_{max} = R_1$, and $\bar{R}_{min} = R_{min} = R_0$.
\end{prop}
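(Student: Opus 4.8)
The plan is to exploit that, entrywise, $B^*(p)$ is a per-row convex combination of $\hat B$ and $B$, which sandwiches it between $\bar B_{min}$ and $\bar B_{max}$ uniformly in $p$, and then to transfer this ordering to the leading eigenvalues by monotonicity of the Perron--Frobenius eigenvalue. First I would record that the $(j,k)$ entry of $B^*(p)$ equals $(1-p_j)\hat\beta_{jk} + p_j\beta_{jk} = \hat\beta_{jk} + p_j(\beta_{jk}-\hat\beta_{jk})$, a convex combination of $\hat\beta_{jk}$ and $\beta_{jk}$ with weight $p_j \in [0,1]$ depending only on the row index. Hence $\min(\beta_{jk},\hat\beta_{jk}) \le (1-p_j)\hat\beta_{jk}+p_j\beta_{jk} \le \max(\beta_{jk},\hat\beta_{jk})$, so $\bar B_{min} \preceq B^*(p) \preceq \bar B_{max}$ for every $\mathbf{0}\preceq p \preceq \mathbf{1}$; since $D^{-1}$ is a positive diagonal matrix, right-multiplication preserves this, giving $\bar B_{min}D^{-1} \preceq B^*(p)D^{-1} \preceq \bar B_{max}D^{-1}$. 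All three matrices are nonnegative, hence Metzler, so each leading eigenvalue coincides with its spectral radius.

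Next I would invoke monotonicity of $\lambda_{max}$ for Metzler matrices under $\preceq$: if $M_1 \preceq M_2$ then $\lambda_{max}(M_1)\le\lambda_{max}(M_2)$. Applied to the sandwich this yields $\bar R_{min} \le \lambda_{max}(B^*(p)D^{-1}) \le \bar R_{max}$ for every admissible $p$. The chain~\eqref{eq:Rbounds} then follows by bookkeeping with the definitions: for any fixed $p^{S}$,
\[
R_{min} = \min_{q}\lambda_{max}(B^*(q)D^{-1}) \le \lambda_{max}(B^*(p^{S})D^{-1}) \le \max_{q}\lambda_{max}(B^*(q)D^{-1}) = R_{max},
\]
while taking the minimum (respectively maximum) over $p$ of the previous bound gives $\bar R_{min}\le R_{min}$ and $R_{max}\le\bar R_{max}$.

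For the special cases I would identify the extremal matrices directly. If $B\succeq\hat B$, then $\bar B_{max}=B$ and $\bar B_{min}=\hat B$, so $\bar R_{max}=R_0$ and $\bar R_{min}=R_1$; moreover each entry $\hat\beta_{jk}+p_j(\beta_{jk}-\hat\beta_{jk})$ is nondecreasing in $p_j$, so $B^*(\mathbf{0})=\hat B \preceq B^*(p)\preceq B=B^*(\mathbf{1})$ for all $p$, and monotonicity of $\lambda_{max}$ forces the maximum at $p=\mathbf{1}$ and the minimum at $p=\mathbf{0}$, whence $R_{max}=R_0$ and $R_{min}=R_1$. The case $\hat B\succeq B$ is symmetric, with the roles of $p=\mathbf{0}$ and $p=\mathbf{1}$ exchanged.

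The main obstacle is the monotonicity step under the weak ordering $\preceq$: Proposition~\ref{prop:Metzler} supplies only strict monotonicity under $\prec$ for irreducible matrices, whereas the sandwich is generally non-strict (for instance $B^*(p)$ coincides with $\bar B_{max}$ on rows where $p_j$ reaches an endpoint). I would close this gap either by a continuity argument---perturbing the smaller matrix up to a strictly dominated irreducible matrix, applying Proposition~\ref{prop:Metzler}, and passing to the limit using continuity of the spectral radius in the entries---or by citing the standard weak monotonicity of the Perron--Frobenius eigenvalue for nonnegative matrices. A minor secondary point is that $p\mapsto\lambda_{max}(B^*(p)D^{-1})$ is continuous on the compact set $\{\mathbf{0}\preceq p\preceq\mathbf{1}\}$, so the extrema defining $R_{max}$ and $R_{min}$ are attained.
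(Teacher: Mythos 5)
Your proposal is correct and follows essentially the same route as the paper: the entrywise convex-combination observation giving the sandwich $\bar{B}_{min}D^{-1} \preceq B^*(p)D^{-1} \preceq \bar{B}_{max}D^{-1}$, followed by monotonicity of the leading eigenvalue of an irreducible Metzler matrix, and direct identification of the extremal matrices in the ordered cases. You are in fact somewhat more careful than the paper on the weak-versus-strict ordering issue in Proposition~\ref{prop:Metzler}(3) (and on attainment of the extrema defining $R_{max}$ and $R_{min}$); your perturbation/continuity fix is a legitimate way to close a gap that the paper's own proof glosses over.
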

\begin{proof}
Any matrix with nonnegative entries is Metzler. Thus, $Y(p^{S}) = B^*(p^{S})D^{-1}$ is an irreducible Metzler matrix since  $B \succ \bar{\mathbf{0}}$ and $\hat{B} \succeq \bar{\mathbf{0}}$ are irreducible. By Proposition~\ref{prop:Metzler}, $\lambda_{max}(Y(p^{S}))$ increases (decreases) as any entry in $Y(p^{S})$ increases (decreases). Since every non-zero entry of $Y(p^{S})$ is a scaled convex sum of $\beta_{jk}$ and $\hat{\beta}_{jk}$, it follows that
$\bar{B}_{min}D^{-1} \preceq Y(p^{S}) \preceq \bar{B}_{max}D^{-1}$ for any $\mathbf{0} \preceq p^{S} \preceq \mathbf{1}$.
Consequently, \eqref{eq:Rbounds} holds for any $\mathbf{0} \preceq p^S \preceq \mathbf{1}$. If $B \succeq \hat{B}$, then $\bar{B}_{max}=B$ and $\bar{B}_{min}=\hat{B}$. If $\hat{B} \succeq B$, then $\bar{B}_{max} =\hat{B}$ and $\bar{B}_{min} =B$. The stated results then follow from the definitions of the  reproduction numbers.
\end{proof}
\section{Stability of Equilibria}\label{sec:Stability}
In this section we prove conditions for the local stability of the EE and of points in the IFE set $\mathcal{M}$.

\subsection{Stability of the Endemic Equilibria}
\begin{prop}\label{prop:EEstrong}
The network SIRI dynamics~\eqref{eq:Het-SIRI} have a unique EE, {\color{black}given by~\eqref{eq:EE_j}, which exists} if and only if $R_1 > 1$. {\color{black}When it exists,} the EE is locally stable. 
\end{prop}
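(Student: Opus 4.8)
The plan is to treat the existence/uniqueness claim and the local-stability claim separately, reusing the equivalence with the network SIS model wherever possible.

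For \emph{existence and uniqueness}, I would start from Remark~\ref{remark:SIRI_SIS_equivalence}: any EE of~\eqref{eq:Het-SIRI} must satisfy~\eqref{eq:EqPtpI}, which is precisely the endemic-equilibrium condition of the network SIS model with infection matrix $\hat B$ and recovery matrix $D$. By the classical SIS results of Lajmanovich--Yorke and Fall et al.~\cite{lajmanovich1976,fall2007epidemiological}, that system possesses a positive equilibrium $p^{I*}\gg\mathbf{0}$, and it is unique, if and only if its infection-free equilibrium is unstable, i.e. $\lambda_{max}(\hat B - D)>0$. Applying Proposition~\ref{prop:MetzlerRS} to the regular splitting $\hat B - D = \hat B + (-D)$ (with $\hat B \succeq \bar{\mathbf{0}}$ and $-D$ Hurwitz Metzler) converts this into $\rho(\hat B D^{-1}) = R_1 > 1$. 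Since $\hat B$ is irreducible, Proposition~\ref{prop:equilibria} guarantees that this equilibrium is strong, with $p^{S*}=\mathbf{0}$ and $p^{I*}\gg\mathbf{0}$, and that no other EE exists. This settles the ``exists iff $R_1>1$'' and uniqueness parts.

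For \emph{local stability}, I would linearize the full $2N$-dimensional system~\eqref{eq:Het-SIRI} at the EE $(p^{S*},p^{I*})=(\mathbf{0},p^{I*})$. The structural key is that the block $\partial\dot p^S/\partial p^I = -P^{S*}B$ vanishes at $p^{S*}=\mathbf{0}$, so the Jacobian $J$ is block lower-triangular; hence $\lambda(J)$ is the union of the spectra of the diagonal blocks $J_{11}=\partial\dot p^S/\partial p^S$ and $J_{22}=\partial\dot p^I/\partial p^I$, and it suffices to show each is Hurwitz. A direct computation gives $J_{11}=-\textrm{diag}(Bp^{I*})$, which is diagonal with strictly negative entries because $B$ irreducible and $p^{I*}\gg\mathbf{0}$ force $Bp^{I*}\gg\mathbf{0}$; so $J_{11}$ is immediately Hurwitz, and the off-diagonal block $\textrm{diag}\big((B-\hat B)p^{I*}\big)$ --- and thus the particular immunity case (partial, compromised, or mixed) --- plays no role in stability.

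The crux, which I expect to be the main obstacle, is showing that the remaining block $J_{22}=(\mathbb{I}-P^{I*})\hat B - D - \textrm{diag}(\hat B p^{I*})$ is Hurwitz. This is an irreducible Metzler matrix, since its off-diagonal entries $(1-p_j^{I*})\hat\beta_{jk}$ are nonnegative with $1-p_j^{I*}>0$ by~\eqref{eq:EE_j}. I would therefore invoke Proposition~\ref{prop:MetzlerHurwitz} and exhibit a strictly positive vector $v$ with $J_{22}v\ll\mathbf{0}$. The key idea is to take $v=p^{I*}$ itself: writing $z=\hat B p^{I*}$ and using~\eqref{eq:EE_j} in the form $\delta_j p_j^{I*}=(1-p_j^{I*})z_j$, the $j$-th entry of $J_{22}p^{I*}$ collapses to $-p_j^{I*}z_j = -p_j^{I*}(\hat B p^{I*})_j<0$ for every $j$. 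Hence $J_{22}p^{I*}\ll\mathbf{0}$, so $J_{22}$ is Hurwitz, and combining the two diagonal blocks shows $J$ is Hurwitz and the EE is locally stable. Identifying the right certificate $v=p^{I*}$ (rather than a generic Perron vector) is the one nonobvious step; everything else is linear algebra and bookkeeping.
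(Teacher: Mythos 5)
Your proposal is correct and follows essentially the same route as the paper's proof: existence and uniqueness via the equivalence with the network SIS model with infection matrix $\hat B$ (reducing $R_1>1$ to instability of the SIS infection-free equilibrium), and local stability via the block lower-triangular Jacobian at $(\mathbf{0},p^{I*})$, with $-\mathrm{diag}(Bp^{I*})$ trivially Hurwitz and the Metzler block shown Hurwitz using exactly the certificate $v=p^{I*}$, which by the equilibrium identity yields $J_{22}p^{I*}=-\mathrm{diag}(\hat Bp^{I*})p^{I*}\ll\mathbf{0}$. The only cosmetic difference is that you spell out the regular-splitting conversion $\lambda_{max}(\hat B-D)>0\iff R_1>1$, which the paper leaves implicit.
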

\begin{proof}
{\color{black} 
Since $B$ and $\hat{B}$ are irreducible, $p^{S*}=\mathbf{0}$.
Following the proof of Proposition~\ref{prop:equilibria} and the argument of Remark~\ref{remark:SIRI_SIS_equivalence}, the equilibria of the network SIRI model~\eqref{eq:Het-SIRI} are equivalent to the equilibria of the network SIS model~\eqref{eq:SIS} where $B = \hat{B}$.
Thus, the proof of existence and uniqueness of the EE of~\eqref{eq:Het-SIRI}, if and only if $R_1 > 1$, follows the proof in Section 2.2 of~\cite{fall2007epidemiological} for existence and uniqueness of the EE of~\eqref{eq:SIS} where $B = \hat{B}$.
}


 To prove local stability, we compute the Jacobian  of~\eqref{eq:Het-SIRI} at the {\color{black}unique} EE~\eqref{eq:EE_j}:
\begin{equation} \label{eq:Het-SIRI_LinEE} 
J_{EE} = 
\begin{bmatrix}
- \textrm{diag}(B p^{I*})
& \bar{\mathbf{0}} \\ 
\ \textrm{diag}((B-\hat{B}) p^{I*}) & J_a
\end{bmatrix},
\end{equation}
where $J_a = \hat{B} - D -P^{I*} \hat{B} - \textrm{diag}(\hat{B} p^{I*})$. Since 
$-\textrm{diag}(Bp^{I*})$ is Hurwitz, showing that the EE is locally stable is equivalent to showing that the Metzler matrix $J_a$ is Hurwitz. 

By~\eqref{eq:EqPtpI},  $(\hat{B}-D -P^{I*}\hat{B}) p^{I*} = 0$, and thus
\begin{equation}
J_a p^{I*} = - \textrm{diag}(\hat{B}p^{I*})p^{I*} \ll \mathbf{0},
\end{equation}
where the inequality follows from $p^{I*} \gg \mathbf{0}$.
By Proposition~\ref{prop:MetzlerHurwitz} we conclude that $J_a$ is Hurwitz.
\end{proof}

\subsection{Stability of Infection-Free Equilibria}\label{sec:StabIFE}
In this section we prove results on the stability of the IFE set $\mathcal{M}$. The equilibria in $\mathcal{M}$ are non-hyperbolic: the Jacobian of \eqref{eq:Het-SIRI} at a point $x\in\mathcal{M}$ has $N$ zero eigenvalues corresponding to the $N$-dimensional space tangent to $\mathcal{M}$. The remaining $N$ eigenvalues are called {\em transverse} as they correspond to the $N$-dimensional space transverse to $\mathcal{M}$. The Shoshitaishvili Reduction Principle~\cite{shoshitaishvili1975bifurcations}, which extends the Hartman-Grobman Theorem to non-hyperbolic equilibria, can be used to study the local stability of points in $\mathcal{M}$ in terms of the transverse eigenvalues of the Jacobian and the dynamics on the center manifold. We show how the irreducibility of $B$ and $\hat{B}$ imply that the behavior of solutions in $\Delta_N$ close to a point $x \in \mathcal{M}$ depends only on the sign of the leading transverse eigenvalue of the Jacobian at $x$. 

Throughout the rest of this paper, we consider the topological space $\Delta_N$ as a subspace of $\mathbb{R}^{2N}$. This allows us to study points in $\partial \mathcal{M}$ and in $\textrm{int}(\mathcal{M})$ simultaneously. In $\mathbb{R}^{2N}$, the invariant set $\mathcal{M}$ of IFE points becomes a subset of the invariant manifold of equilibria $\mathcal{M}' = \{ (p,\mathbf{0}) | p \in \mathbb{R}^N \}$. 


\begin{lemma}[Local Stability of Points in the IFE set $\mathcal{M}$]\label{lemma:M}
Let $x = (p^{S*},\mathbf{0}) \in  \mathcal{M}$. Let $J_{\mathcal{M}}(x)$ be the Jacobian of \eqref{eq:Het-SIRI} at $x$ and $\lambda_{Tmax}(J_{\mathcal{M}}(x))$  the leading transverse eigenvalue of $J_{\mathcal{M}}(x)$. 
Then, $\lambda_{Tmax}(J_{\mathcal{M}}(x)) \in \mathbb{R}$ and the following hold.  
\begin{itemize}
\item Suppose $\lambda_{Tmax}(J_{\mathcal{M}}(x)) < 0$. Then, $x$ is locally stable.
I.e., given a neighborhood $U$ of $x$ on $\mathcal{M}'$ such that $\lambda_{Tmax}(J_{\mathcal{M}}(u))<0$ for all $u \in U$, there exists $V \subset \Delta_N$ and $x \in V$ 
such that any solution starting in $V$ converges exponentially to a point in $U \cap \Delta_N$. 
\item Suppose $\lambda_{Tmax}(J_{\mathcal{M}}(x))>0$. Then, $x$ is unstable. I.e., there exists  $W \subset \Delta_N$ and $x\in W$, such that any solution starting in $W$ leaves $W$.
\end{itemize}
\end{lemma}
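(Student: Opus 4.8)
The plan is to reduce this non-hyperbolic stability problem to a spectral condition on a single Metzler block and then transfer that information to the nonlinear flow on $\Delta_N$ via the Shoshitaishvili Reduction Principle. First I would linearize~\eqref{eq:Het-SIRI} at $x=(p^{S*},\mathbf{0})$. Since the terms $-P^SBp^I$ and $-P^I\hat Bp^I$ are quadratic in $p^I$ and vanish to first order at $p^I=\mathbf{0}$, the Jacobian is block upper triangular,
\begin{equation*}
J_{\mathcal{M}}(x)=\begin{bmatrix} \bar{\mathbf{0}} & -P^{S*}B \\ \bar{\mathbf{0}} & B^*(p^{S*})-D \end{bmatrix}.
\end{equation*}
Its spectrum is therefore the disjoint union of $N$ zero eigenvalues, whose eigenvectors $(\xi,\mathbf{0})$ span the tangent space to $\mathcal{M}'$, and the $N$ transverse eigenvalues $\lambda(B^*(p^{S*})-D)$. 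As $B$ and $\hat B$ are irreducible, $B^*(p^{S*})-D$ is an irreducible Metzler matrix; Proposition~\ref{prop:Metzler} then gives that its dominant eigenvalue is real, simple, and has a strictly positive eigenvector, and as the Perron eigenvalue of an irreducible Metzler matrix it strictly dominates the real part of every other eigenvalue. Identifying $\lambda_{Tmax}(J_{\mathcal{M}}(x))$ with this dominant eigenvalue $\lambda_{max}(B^*(p^{S*})-D)$ immediately yields $\lambda_{Tmax}\in\mathbb{R}$ and shows that the sign of $\lambda_{Tmax}$ controls the sign of the largest transverse real part.

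Next I would invoke the reduction. Because $\mathcal{M}'$ is an invariant manifold consisting entirely of equilibria and is tangent to the $N$-dimensional center eigenspace of $J_{\mathcal{M}}(x)$, it serves as a center manifold at $x$ on which the reduced vector field is identically zero; moreover the zero tangential rate is dominated by the nonzero transverse rates, so $\mathcal{M}'$ is normally hyperbolic near $x$. By the Shoshitaishvili Reduction Principle the local flow is orbit-equivalent to the product of these trivial center dynamics with the hyperbolic transverse dynamics governed by $B^*(p^{S*})-D$.

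In the stable case $\lambda_{Tmax}<0$, dominance of the leading eigenvalue forces all transverse eigenvalues to have negative real part, so the transverse block is a uniform contraction and $\mathcal{M}'$ is normally attracting. I would then appeal to the associated stable foliation with asymptotic phase: each point of a small neighborhood $V$ lies on a stable fiber that converges exponentially to a single base point of $\mathcal{M}'$, and since every base point is an equilibrium this gives exponential convergence of the solution to a point of $\mathcal{M}$; taking $V$ inside the tube over $U$ keeps the limit in $U\cap\Delta_N$. In the unstable case $\lambda_{Tmax}>0$, the positive right eigenvector $v\gg\mathbf{0}$ from Proposition~\ref{prop:Metzler} yields a transverse unstable direction whose $p^I$-component is $v\succ\mathbf{0}$; this direction points into $\Delta_N$ because $p^I\succeq\mathbf{0}$ there, so the transverse unstable manifold meets $\Delta_N$ in a set on which trajectories grow and eventually leave any fixed neighborhood $W$, establishing instability.

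The step I expect to be the main obstacle is reconciling the reduction principle, which only describes the flow on $\mathbb{R}^{2N}$ and yields convergence to the manifold $\mathcal{M}'$ rather than to an individual equilibrium, with the constraint that physical solutions live in the cone $p^I\succeq\mathbf{0}$ of $\Delta_N$. Two points must be secured: in the stable case, upgrading convergence-to-$\mathcal{M}'$ to convergence to a \emph{single} point (handled by normal hyperbolicity and asymptotic phase), and in the unstable case, ensuring the escaping direction is actually realizable inside $\Delta_N$. Both hinge on irreducibility: Perron--Frobenius positivity of the leading transverse eigenvector guarantees that the unique potentially expanding mode lies in the admissible cone, which is exactly why the sign of $\lambda_{Tmax}$ alone, and not the finer structure of the transverse spectrum, decides the fate of solutions in $\Delta_N$.
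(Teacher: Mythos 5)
Your setup — the block upper-triangular Jacobian, the identification of the transverse spectrum with $\lambda(B^*(p^{S*})-D)$, irreducible-Metzler/Perron--Frobenius facts for realness and simplicity of $\lambda_{Tmax}$, and the Shoshitaishvili reduction — is the same as the paper's, and your stable case is essentially the paper's argument (all transverse eigenvalues acquire negative real part by dominance of the Perron root, and the stable foliation over the equilibrium manifold gives exponential convergence to a single point of $U\cap\mathcal{M}$). One caveat: your claim that $\mathcal{M}'$ is normally hyperbolic is unjustified in the unstable case, since only the leading transverse eigenvalue is guaranteed real and positive; other transverse eigenvalues may have zero real part, which is why the paper's invocation of Shoshitaishvili explicitly allows for ``possibly center manifolds.''

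The genuine gap is in the unstable case. You show that the Perron eigenvector $v\gg\mathbf{0}$ gives an unstable direction pointing into $\Delta_N$, so \emph{some} trajectories escape; but the lemma asserts that \emph{every} solution starting in $W$ leaves $W$, and that stronger conclusion is what is actually used downstream (e.g., to rule out convergence to $\mathcal{M}$ in the endemic regime). Existence of an escaping orbit does not preclude other solutions in $W\cap\Delta_N$ from lying on stable or center fibers of nearby equilibria and remaining in $W$ forever. The missing step is the complementary Perron--Frobenius fact (Proposition~\ref{prop:Metzler}): every eigenvector of the irreducible Metzler block $J_T$ other than the leading one has at least one negative entry, hence the stable and center manifolds of each stationary solution in $U$ are spanned by directions that exit the cone $p^I\succeq\mathbf{0}$ and therefore intersect $\Delta_N$ only on $\mathcal{M}'$ itself. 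Only with this does one conclude that no solution starting in $W\subset\Delta_N$ can remain in $W$. Your closing remark about positivity of the leading eigenvector gestures at this, but the argument you need concerns the \emph{non-leading} eigenvectors, and it is not made. A secondary omission: for $x\in\partial\mathcal{M}$ the neighborhood $U$ on $\mathcal{M}'$ contains points outside $\mathcal{M}$, and one must check (as the paper does) that $J_T(p')$ remains Metzler irreducible there so that the same spectral structure persists across all of $U$.
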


\begin{proof}
For an arbitrary point $x = (p^{S*},\mathbf{0}) \in \mathcal{M}$, 
\begin{equation}\label{eq:J_IFE}
J_{\mathcal M}(x) = 
\begin{bmatrix}
\bar{\mathbf{0}}  & - P^{S*} B \\ 
\bar{\mathbf{0}}  & J_T(p^{S*})
\end{bmatrix},
\end{equation}
where $J_T(p^{S*}) = B^*(p^{S*}) - D$. The $N$ transverse eigenvalues of $J_{\mathcal{M}}(x)$ are the eigenvalues of $J_T(p^{S*})$ and so  $\lambda_{Tmax}(J_{\mathcal{M}}(x))= \lambda_{max}(J_T(p^{S*}))$. The matrix $J_T(p^{S*})$ is Metzler irreducible since $B$ and $\hat{B}$ are Metzler irreducible. By Proposition~\ref{prop:Metzler} $\lambda_{max}(J_T(p^{S*})) \in \mathbb{R}$. 

Consider an arbitrary point $x' = (p',\mathbf{0}) \in \mathcal{M}'\setminus \mathcal{M}$. The Jacobian of~\eqref{eq:Het-SIRI} at $x'$ takes on the same form as~\eqref{eq:J_IFE}, and $J_T(p')$ is Metzler irreducible if every entry of $p'$ satisfies
\begin{equation*}
\begin{cases}
    p'_j > -\frac{\sum_{j=1}^N \hat{\beta}_{jk}}{\sum_{j=1}^N (\beta_{jk} - \hat{\beta}_{jk})} & \rm{if} \; \sum_{j=1}^N (\beta_{jk} - \hat{\beta}_{jk}) \geq 0,\\
    p'_j < \frac{\sum_{j=1}^N \hat{\beta}_{jk}}{\sum_{j=1}^N(\hat{\beta}_{jk} - \beta_{jk})} & \rm{if} \; \sum_{j=1}^N (\beta_{jk} - \hat{\beta}_{jk}) \leq 0.        
\end{cases}
\end{equation*}
Since $B,\hat{B}$ are irreducible, $\mid \sum_{j=1}^N \hat{\beta}_{jk}/\sum_{j=1}^N (\beta_{jk} - \hat{\beta}_{jk}) \mid > 1$
for all $j$. So, for any $x \in \partial \mathcal{M}$, there exists a neighborhood $\bar{U}$ of $x$ on $\mathcal{M}'$ such that $J_T(\bar{u}')$ is Metzler irreducible for every $\bar{u} = (\bar{u}',\mathbf{0}) \in \bar{U}$. By Proposition~\ref{prop:Metzler} $\lambda_{max}(J_T(p')) \in \mathbb{R}$.

Let $U$ be a neighborhood of $x$ on $\mathcal{M}'$ such that $\lambda_{Tmax}(J_{\mathcal{M}}(u))$ has the same sign as $\lambda_{Tmax}(J_{\mathcal{M}}(x))$ for all $u  \in U$. Then, $\lambda_{max}(J_{T}(u'))$ has the same sign as $\lambda_{max}(J_{T}(p^{S*}))$ for all $u = (u', \mathbf{0})\in U$. By Proposition~\ref{prop:Metzler}  every left and right eigenvector of every eigenvalue of $J_T(u')$, other than $\lambda_{max}(J_T(u'))$, contains at least one negative entry. Thus, for any $\bar{u} \in U \cap \Delta_N$, the eigenvector corresponding to $\lambda_{Tmax}(J_\mathcal{M}(\bar{u}))$ lies in $\Delta_N$, and the eigenvectors corresponding to the other $N-1$ transverse eigenvalues lie outside $\Delta_N$. 

If $\lambda_{max}(J_T(p^{S*}))<0$, then every transverse eigenvalue of $J_{\mathcal M}(x)$ has negative real part. By the Shoshitaishvili Reduction Principle~\cite{shoshitaishvili1975bifurcations}, there exists a neighborhood $V' \in \mathbb{R}^{2N}$ of $x$ that is positively invariantly foliated by a family of stable manifolds corresponding to the family of stationary solutions in $U$ (see~\cite{henry1981geometric,aulbach1984continuous,liebscher2015bifurcation}), each stable manifold spanned by the (generalized) eigenvectors associated with the $N$ negative transverse eigenvalues of $J_\mathcal{M}(u)$. Let $V = V' \cap \Delta_N$. Then $V \subset \Delta_N$ is positively invariantly foliated by a family of stable manifolds. The invariance of $\Delta_N$ implies each of these stable manifolds corresponds to a point $\bar{u} \in U \cap \mathcal{M}$. Thus, any solution starting in $V$ converges exponentially along a stable manifold 
to the corresponding stationary solution in $U \cap \mathcal{M}$.

If $\lambda_{max}(J_T(p^{S*}))>0$, then there is at least one transverse eigenvalue of $J_{\mathcal M}(x)$ with positive real part. The trace of $J_T(p^{S*})$ is negative for any ${\mathbf{0} \preceq p^{S*} \preceq \mathbf{1}}$, so the sum of the eigenvalues of $J_T(p^{S*})$ is always negative and $J_{\mathcal M}(x)$  has at least one transverse eigenvalue with negative real part. 
By the Shoshitaishvili Reduction Principle~\cite{shoshitaishvili1975bifurcations} there exists a neighborhood $W' \subset \mathbb{R}^{2N}$ of $x$ that is
positively invariantly foliated by a family of stable, unstable, and possibly center manifolds corresponding to the family of stationary solutions in $U$. Let $W = W' \cap \Delta_N$. Then, the stable and center manifolds of each stationary solution $\bar{u} \in W \cap \mathcal{M}$ lie outside $\Delta_N$. Thus, no solution starting in $W$ can remain in $W$ for all time, i.e., any solution starting in $W$ leaves $W$. 
\end{proof}

\begin{definition}[Stable, unstable, and center IFE subsets]\label{def:Msubsets}
The {\em stable IFE subset} is $\mathcal{M}_- = \{x \in \mathcal{M}\, |\, \lambda_{Tmax}(J_\mathcal{M}(x)) < 0 \}$. The {\em unstable IFE subset} is $\mathcal{M}_+ = \{x \in \mathcal{M}\, |\, \lambda_{Tmax}(J_\mathcal{M}(x)) > 0 \}$. The {\em center IFE subset} is $\mathcal{M}_0 = \{x \in \mathcal{M}\, |\, \lambda_{Tmax}(J_\mathcal{M}(x)) = 0 \}$. 
\end{definition}

\begin{prop}
${\mathcal{M}_- \cup \mathcal{M}_+ \cup \mathcal{M}_0 = \mathcal{M}}$. Every point in $\mathcal{M}_-$ is locally stable and every point in $\mathcal{M}_+$ is unstable.
\end{prop}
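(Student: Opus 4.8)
The plan is to read off both claims directly from Lemma~\ref{lemma:M} together with Definition~\ref{def:Msubsets}, since the analytic content has already been established in the lemma and this proposition is essentially a bookkeeping repackaging of it. First I would establish the partition $\mathcal{M}_- \cup \mathcal{M}_+ \cup \mathcal{M}_0 = \mathcal{M}$. The key input is the first conclusion of Lemma~\ref{lemma:M}, namely that $\lambda_{Tmax}(J_\mathcal{M}(x)) \in \mathbb{R}$ for every $x \in \mathcal{M}$; this holds because the transverse block $J_T(p^{S*}) = B^*(p^{S*}) - D$ is an irreducible Metzler matrix, whose leading eigenvalue is real by Proposition~\ref{prop:Metzler}. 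Once the leading transverse eigenvalue is known to be a real number, every $x \in \mathcal{M}$ falls into exactly one of the three mutually exclusive and exhaustive cases $\lambda_{Tmax}(J_\mathcal{M}(x)) < 0$, $\lambda_{Tmax}(J_\mathcal{M}(x)) = 0$, or $\lambda_{Tmax}(J_\mathcal{M}(x)) > 0$. By Definition~\ref{def:Msubsets} these three cases are precisely the defining conditions of $\mathcal{M}_-$, $\mathcal{M}_0$, and $\mathcal{M}_+$ respectively, so the three sets are pairwise disjoint and their union is all of $\mathcal{M}$.

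For the stability statements, I would simply invoke the two bullet points of Lemma~\ref{lemma:M}. Any $x \in \mathcal{M}_-$ satisfies $\lambda_{Tmax}(J_\mathcal{M}(x)) < 0$ by definition, so the first bullet of Lemma~\ref{lemma:M} yields local (in fact exponential) stability of $x$. Likewise, any $x \in \mathcal{M}_+$ satisfies $\lambda_{Tmax}(J_\mathcal{M}(x)) > 0$, so the second bullet of Lemma~\ref{lemma:M} yields instability of $x$. There is no genuine obstacle here: the real work---constructing the positively invariantly foliated neighborhoods via the Shoshitaishvili Reduction Principle, and controlling the transverse spectrum through the irreducibility of $B$ and $\hat{B}$---was already carried out in the proof of Lemma~\ref{lemma:M}. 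The one point that deserves explicit mention is that the trichotomy of signs used to form the partition is legitimate only because $\lambda_{Tmax}(J_\mathcal{M}(x))$ is real; this is why the reality claim of Lemma~\ref{lemma:M} must be cited before splitting into cases. Note also that the proposition makes no stability assertion about points in $\mathcal{M}_0$, which is consistent with the linearization being inconclusive in the center case and is therefore outside the scope of the claim.
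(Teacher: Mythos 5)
Your proposal is correct and follows exactly the route of the paper, whose proof of this proposition is simply the one-line citation of Definition~\ref{def:Msubsets} and Lemma~\ref{lemma:M}. Your added observation that the sign trichotomy partitions $\mathcal{M}$ only because $\lambda_{Tmax}(J_\mathcal{M}(x))$ is real is a sensible (if implicit in the paper) piece of bookkeeping, not a deviation.
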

\begin{proof} This follows from Definition~\ref{def:Msubsets} and Lemma~\ref{lemma:M}.
\end{proof}

We now state {\color{black} the first two} theorems of the paper, which  relate the extreme basic reproduction numbers $R_{max}$ and $R_{min}$ to the stable, unstable, and center subsets of the IFE set $\mathcal{M}$.

\begin{theorem}[Stability of the IFE set $\mathcal{M}$]\label{thm:stability}\leavevmode
\begin{enumerate}[(A)]
    \item If $R_{max} < 1$, then $\mathcal{M}_- = \mathcal{M}$.\label{itm:IFE_stable}
    \item If $R_{min} > 1$, then $\mathcal{M}_+ = \mathcal{M}$.\label{itm:IFE_unstable}
    \item If $R_{max} = R_{min} = 1$, then $\mathcal{M}_0 = \mathcal{M}$.\label{itm:IFE_Rmax=Rmin=1} 
    \item If $R_{min} <R_{max} = 1$,  
    then $\mathcal{M}_- = \mathcal{M} \setminus \mathcal{M}_0$ and $\mathcal{M}_0 \subset \partial\mathcal{M}$.\label{itm:IFE_Rmax=1} 
    \item If $R_{max} > R_{min} = 1$, then $\mathcal{M}_+ = \mathcal{M} \setminus \mathcal{M}_0$
    and $\mathcal{M}_0 \subset \partial\mathcal{M}$.\label{itm:IFE_Rmin=1}
    \item If {\color{black} $R_{max} > 1 > R_{min}$}, then $\mathcal{M}_-,\mathcal{M}_+,\mathcal{M}_0 \neq \emptyset$ and each subset consists of $n_-,n_+,n_0$ connected sets, respectively. Each of the center connected sets $\mathcal{M}_0^j$, $j = 1,\dots,n_0$, is an $N-1$-dimensional smooth hypersurface with boundary $\partial \mathcal{M}_0^j \subset \partial \mathcal{M}$. Each $\mathcal{M}_0^j$ separates an $N$-dimensional stable connected hypervolume from an $N$-dimensional unstable connected hypervolume.\label{itm:IFE_mixed}
\end{enumerate}
\end{theorem}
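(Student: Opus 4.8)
The plan is to reduce the entire theorem to the study of the scalar field $R(p^S) := \rho(B^*(p^S)D^{-1})$ on the $N$-dimensional cube $\{\mathbf 0 \preceq p^S \preceq \mathbf 1\}$ that parameterizes $\mathcal M$. By Lemma~\ref{lemma:M} the leading transverse eigenvalue at $x=(p^{S*},\mathbf 0)$ equals $\lambda_{max}(J_T(p^{S*}))$ with $J_T(p^{S*}) = B^*(p^{S*}) - D$; since $B^*(p^{S*})+(-D)$ is a regular splitting, Proposition~\ref{prop:MetzlerRS} gives that $\lambda_{max}(J_T(p^{S*}))$ has the same sign as $R(p^{S*}) - 1$. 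Hence $x\in\mathcal M_-,\mathcal M_0,\mathcal M_+$ iff $R(p^{S*})<1,=1,>1$, respectively. Statements (A)--(C) are then immediate from the identification of $R_{max}$ and $R_{min}$ as the max/min of $R$ over the cube. For (D) and (E) the set identities $\mathcal M_- = \mathcal M\setminus\mathcal M_0$ (resp. $\mathcal M_+ = \mathcal M\setminus\mathcal M_0$) follow because $R_{max}=1$ makes $R\le 1$ everywhere, so $\mathcal M_+=\emptyset$ (resp. $R_{min}=1$ forces $\mathcal M_-=\emptyset$); what remains in (D), (E), and (F) is geometric information about the level set $\{R=1\}$.

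The central lemma I would establish is that $R$ has \emph{no critical point in} $\textrm{int}(\mathcal M)$ whenever it is nonconstant. On the interior every $p_j^S\in(0,1)$, so each entry $(1-p_j^S)\hat\beta_{jk}+p_j^S\beta_{jk}$ of $B^*(p^S)$ is positive exactly where $\beta_{jk}>0$; thus $B^*(p^S)D^{-1}$ is irreducible, its Perron root $R(p^S)$ is simple (Proposition~\ref{prop:Metzler}) and hence analytic, with positive right and left Perron eigenvectors $v,w\gg\mathbf 0$ (normalized $w^Tv=1$). First-order perturbation of a simple eigenvalue gives $\partial R/\partial p_j^S = w_j\,[(B-\hat B)D^{-1}v]_j$, since $\partial B^*/\partial p_j^S = e^j(e^j)^T(B-\hat B)$. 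As $w\gg\mathbf 0$, the condition $\nabla R=\mathbf 0$ forces $(B-\hat B)D^{-1}v=\mathbf 0$; substituting into $B^*(p^S)D^{-1}v = R(p^S)v$ yields $\hat B D^{-1}v = R(p^S)v$, and then $B^*(q)D^{-1}v = \hat B D^{-1}v = R(p^S)v$ for \emph{every} $q$. Since $v\gg\mathbf 0$, Perron--Frobenius makes $v$ the Perron vector of each $B^*(q)D^{-1}$, so $R(q)\equiv R(p^S)$ and $R_{min}=R_{max}$. Contrapositively, when $R_{min}<R_{max}$ there are no interior critical points. In (D) an interior point of $\mathcal M_0$ would be an interior global \emph{maximizer} of $R$ (value $R_{max}=1$), hence a critical point, which is impossible; the same argument with a global \emph{minimizer} handles (E). Therefore $\mathcal M_0\subset\partial\mathcal M$ in both cases.

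For (F), $R_{max}>1>R_{min}$ means $R$ takes values above and below $1$, so by continuity and connectedness of the cube $\mathcal M_-,\mathcal M_+,\mathcal M_0$ are all nonempty; $\mathcal M_+=\{R>1\}$ and $\mathcal M_-=\{R<1\}$ are relatively open while $\mathcal M_0=\{R=1\}$ is closed, and I would invoke real-analyticity (subanalyticity) of $R$ to conclude each decomposes into finitely many connected components $n_-,n_+,n_0$. Because $R$ is nonconstant, the interior-critical-point lemma gives $\nabla R\neq\mathbf 0$ on $\{R=1\}\cap\textrm{int}(\mathcal M)$, so $1$ is a regular value there; the regular value theorem makes $\mathcal M_0\cap\textrm{int}(\mathcal M)$ a smooth embedded $(N-1)$-dimensional hypersurface without boundary, which forces the boundary of each component $\mathcal M_0^j$ to lie in $\partial\mathcal M$. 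The nonvanishing gradient co-orients $\mathcal M_0^j$, so a tubular neighborhood splits as $\mathcal M_0^j\times(-\varepsilon,\varepsilon)$ with $R$ strictly increasing across it; the $\{R<1\}$ side lies in $\mathcal M_-$ and the $\{R>1\}$ side in $\mathcal M_+$, both $N$-dimensional since $\mathcal M$ is the $N$-dimensional cube. This is the claimed separation of a stable from an unstable connected hypervolume.

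The sign dictionary and the (A)--(E) set identities are bookkeeping; the real work is (F). The main obstacle is the regular-value lemma, i.e. proving $\nabla R\neq\mathbf 0$ on $\{R=1\}$, together with converting it into the stated differential-topological conclusions (smoothness, $\partial\mathcal M_0^j\subset\partial\mathcal M$, two-sided separation) and justifying finiteness of the component counts via analyticity of the Perron root. I would also take care to confirm irreducibility of $B^*(p^S)$ throughout $\textrm{int}(\mathcal M)$, so that the eigenvalue-derivative formula and the analyticity of $R$ are valid there.
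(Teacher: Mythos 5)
Your proposal is correct and follows essentially the same route as the paper: the sign dictionary via the regular splitting of $B^*(p)-D$, the first-order perturbation formula $\nabla = \mathrm{diag}(w)(B-\hat B)(\cdot)v$ showing the Perron root has no interior critical points unless it is constant (the paper's Lemmas on $\Lambda$ and its gradient), extrema forced to $\partial\mathcal{M}$ for (D)--(E), and the regular-value/implicit-function argument for the hypersurface structure in (F). The only cosmetic differences are that you work with $R(p)=\rho(B^*(p)D^{-1})$ rather than $\Lambda(p)=\lambda_{max}(B^*(p)-D)$, and you obtain $\partial\mathcal{M}_0^j\subset\partial\mathcal{M}$ from the ``embedded hypersurface without boundary'' property where the paper uses the absence of limit points of the gradient flow of $\Lambda$; you are also slightly more careful than the paper in flagging that finiteness of the component counts needs (sub)analyticity of the Perron root.
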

 
\begin{remark}
 Theorem~\ref{thm:stability} applies to the six different cases of the network SIRI model as follows {\color{black}(see Table~\ref{tab:Stability_IFE})}. \ref{itm:IFE_stable} applies to Cases 2, 3, 4, 5, and 6. \ref{itm:IFE_unstable} applies to Cases 3, 4, 5, and 6. \ref{itm:IFE_Rmax=Rmin=1} applies to Case 3. \ref{itm:IFE_Rmax=1} applies to Cases 2, 4, 5, and 6. \ref{itm:IFE_Rmin=1} applies to Cases 4, 5, and 6. \ref{itm:IFE_mixed} applies to Cases 2, 4, 5, and 6. We specialize~\ref{itm:IFE_mixed} in Theorem~\ref{thm:uniqueMsubsets} to provide the key to characterizing global behavior in Cases 2, 4, 5, and 6a. 
\end{remark}

\begin{table}[!t]
\caption{Applicability of Theorem~\ref{thm:stability} statements\hspace{\textwidth} to the Network SIRI model cases}
\label{tab:Stability_IFE}
\begin{center}
\begin{tabular}{c|c|c|c|c|c|c|c}
\textbf{Case} & \textbf{Model} &\textbf{A} & \textbf{B} & \textbf{C} & \textbf{D} &\textbf{E} & \textbf{F}\\
\hline
1 & SI & &  &  &  & & \\
2 & SIR & \checkmark & & & \checkmark & & \checkmark \\
3 & SIS & \checkmark & \checkmark & \checkmark & & & \\
4 & Partial & \checkmark & \checkmark & & \checkmark  & \checkmark & \checkmark\\
5 & Comprom. & \checkmark & \checkmark & & \checkmark  & \checkmark & \checkmark\\
6 & Mixed & \checkmark & \checkmark & & \checkmark  & \checkmark & \checkmark\\
\end{tabular}
\end{center}
\end{table}
\begin{theorem}[Uniqueness of stable, unstable, and center subsets]\label{thm:uniqueMsubsets}
If {\color{black}$R_{max} > 1 > R_{min}$}, then for Case 2 (SIR), Case 4 (partial immunity), Case 5 (compromised immunity), and Case 6a (weak mixed immunity), $\mathcal{M}_0$ consists of a unique $(N-1)$-dimensional hypersurface with boundary $\partial\mathcal{M}_0 \subset \partial \mathcal{M}$ dividing $\mathcal{M}$ into $\mathcal{M}_-$ and $\mathcal{M}_+$.
\end{theorem}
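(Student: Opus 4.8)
The plan is to recast the statement as a connectivity question about the zero level set of a single scalar function, and then exploit a monotonicity that is special to the weak cases. Write $g(p^S) = \lambda_{max}(J_T(p^S))$ with $J_T(p^S) = B^*(p^S) - D$ as in the proof of Lemma~\ref{lemma:M}. Using the regular splitting $J_T = B^*(p^S) + (-D)$ together with Proposition~\ref{prop:MetzlerRS}, $g(p^S)$ has the same sign as $R(p^S) - 1$, so on $\mathcal{M} \cong [0,1]^N$ (coordinatized by $p^S$) Definition~\ref{def:Msubsets} reads $\mathcal{M}_- = \{g < 0\}$, $\mathcal{M}_+ = \{g > 0\}$, and $\mathcal{M}_0 = g^{-1}(0)$. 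Under $R_{max} > 1 > R_{min}$, Theorem~\ref{thm:stability}\ref{itm:IFE_mixed} already supplies that $\mathcal{M}_0$ is a disjoint union of $n_0$ smooth $(N-1)$-dimensional hypersurfaces, each with boundary in $\partial\mathcal{M}$ and each separating a stable from an unstable hypervolume. Hence the only remaining task is to show $n_0 = 1$, i.e. that $\mathcal{M}_0$ is connected.

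First I would establish that, in the weak cases, $g$ is coordinatewise monotone after a reflection. By the weak-mixed-immunity condition defining Case 6a --- which likewise holds in Case 2 (where $\hat{B}=\bar{\mathbf{0}}$), Case 4 ($B \succ \hat{B}$), and Case 5 ($\hat{B} \succ B$) --- for every agent $j$ the differences $\beta_{jk} - \hat{\beta}_{jk}$, $k \in \mathcal{N}_j$, all share one sign. Set $s_j = +1$ if these are $\geq 0$ and $s_j = -1$ otherwise, and change variables to $q_j = p_j^S$ when $s_j = +1$ and $q_j = 1 - p_j^S$ when $s_j = -1$, so that $q$ again ranges over $[0,1]^N$. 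In these coordinates the $(j,k)$ entry of $J_T$ is non-decreasing in $q_j$ (its $q_j$-slope is $|\beta_{jk} - \hat{\beta}_{jk}| \geq 0$) and depends on no other $q_i$; since $J_T$ is an irreducible Metzler matrix on $\textrm{int}(\mathcal{M})$, Proposition~\ref{prop:Metzler} gives that $\lambda_{max}$ does not decrease under a non-negative increase of the entries, so $\tilde{g}(q) := g(p^S(q))$ is non-decreasing in each coordinate $q_j$. Consequently $\{\tilde{g} \leq 0\}$ is a lower set and $\{\tilde{g} \geq 0\}$ an upper set for the coordinatewise order on $[0,1]^N$.

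With coordinatewise monotonicity in hand, connectivity follows from a short homological argument. Put $A = \{\tilde{g} \leq 0\}$ and $B = \{\tilde{g} \geq 0\}$; these are closed, $A \cup B = [0,1]^N$, and $A \cap B = \mathcal{M}_0$, which is nonempty because by Proposition~\ref{prop:RmaxRmin} and the hypothesis $\bar{R}_{min} \leq R_{min} < 1 < R_{max} \leq \bar{R}_{max}$, whence $\tilde{g}(\mathbf{0}) < 0 < \tilde{g}(\mathbf{1})$ and $\tilde{g}$ changes sign on the connected cube. Being a lower set containing $\mathbf{0}$, $A$ is star-shaped with respect to $\mathbf{0}$ (if $q \in A$ then $tq \preceq q$ gives $\tilde{g}(tq) \leq \tilde{g}(q) \leq 0$), hence contractible; likewise $B$ is star-shaped with respect to $\mathbf{1}$ and contractible. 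The Mayer--Vietoris sequence of $(A,B)$, using $\tilde{H}_*(A) = \tilde{H}_*(B) = 0$ and that the cube $A \cup B$ is contractible, collapses to the fragment $0 \to \tilde{H}_0(\mathcal{M}_0) \to 0$, forcing $\tilde{H}_0(\mathcal{M}_0) = 0$, so $\mathcal{M}_0$ is connected and $n_0 = 1$. Combined with Theorem~\ref{thm:stability}\ref{itm:IFE_mixed}, $\mathcal{M}_0$ is the unique $(N-1)$-dimensional hypersurface with $\partial\mathcal{M}_0 \subset \partial\mathcal{M}$ dividing $\mathcal{M}$ into $\mathcal{M}_-$ and $\mathcal{M}_+$.

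I expect the main obstacle to be making the passage from coordinatewise monotonicity to connectivity fully rigorous: the lower/upper-set observation reduces it to the clean Mayer--Vietoris computation above, but one must verify that $A$ and $B$ are admissible (they are subanalytic, hence triangulable absolute neighborhood retracts, and a standard thickening by small star-shaped open neighborhoods preserves their homotopy type). A secondary technical point, already confronted in Lemma~\ref{lemma:M}, is that $J_T(p^S)$ may fail to be irreducible on parts of $\partial\mathcal{M}$ (for instance at $p^S = \mathbf{0}$ in Case 2); there I would invoke continuity of $\lambda_{max}$ and irreducibility throughout $\textrm{int}(\mathcal{M})$ to extend the monotonicity of $\tilde{g}$ to all of $[0,1]^N$ by a limiting argument. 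I could also record the strict version $D_{\mathbf{1}}\tilde{g} = w^T E v > 0$, with $w, v \gg \mathbf{0}$ the Perron eigenvectors of $J_T$ (Proposition~\ref{prop:Metzler}) and $E = [\,|\beta_{jk} - \hat{\beta}_{jk}|\,] \succ \bar{\mathbf{0}}$, valid on $\textrm{int}(\mathcal{M})$; this shows each diagonal of the cube meets $\mathcal{M}_0$ exactly once and thereby re-derives the graph and separation structure independently of Theorem~\ref{thm:stability}\ref{itm:IFE_mixed}.
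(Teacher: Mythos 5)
Your proposal is correct, and it shares the paper's essential mechanism but finishes with a genuinely different argument. The shared core is the monotonicity coming from the weak-immunity sign coherence: your coordinatewise derivative $D_{e^j}\tilde g = w_j\sum_k |\beta_{jk}-\hat\beta_{jk}|v_k \geq 0$ after reflection is exactly the paper's gradient formula $\nabla\Lambda(p)=\mathrm{diag}(w)(B-\hat B)v$ from Lemma~\ref{lemma:LambdaDyn}, and your $\tilde g(\mathbf{0})<0<\tilde g(\mathbf{1})$ is the paper's Lemma~\ref{lemma:LambdaMaxMin} locating the extrema of $\Lambda$ at corner points with $\Lambda(p^{min})<0$. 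Where you diverge is the concluding step. The paper foliates $\mathcal{M}$ by line segments emanating from the minimizing corner $p^{min}$, shows $\bar p^T\nabla\Lambda \geq 0$ along each segment, and deduces that each segment crosses $\Lambda_0$ at most once, which simultaneously yields connectedness of $\mathcal{M}_0$ and the dividing property. You instead observe that $\{\tilde g\le 0\}$ and $\{\tilde g\ge 0\}$ are lower and upper sets, hence star-shaped and contractible, and extract connectedness of their intersection from Mayer--Vietoris, leaning on Theorem~\ref{thm:stability}\ref{itm:IFE_mixed} for the hypersurface and separation structure. Each route has an edge somewhere: your lower-set formulation absorbs stubborn agents painlessly (the coordinates on which $\tilde g$ is constant cause no harm), whereas the paper must separately dispose of lines on which $\bar p^T\nabla\Lambda\equiv 0$ by noting they lie in a level surface $\Lambda_c$ with $c\neq 0$; conversely, the paper's ray argument is elementary and self-contained, while your argument carries the excisiveness technicality for closed Mayer--Vietoris that you correctly flag --- this is genuinely needed, since a point of $\mathcal{M}_0$ can fail to lie in $\mathrm{int}(A)\cup\mathrm{int}(B)$, and the cleanest repair is to run the open-cover version on $\{\tilde g<\epsilon\}$ and $\{\tilde g>-\epsilon\}$ (still lower/upper sets, hence contractible) and then obtain $\mathcal{M}_0=\bigcap_{\epsilon>0}\{|\tilde g|\le\epsilon\}$ as a nested intersection of compact connected sets. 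Your closing remark that $D_{\mathbf{1}}\tilde g=w^TEv>0$ with $E=[\,|\beta_{jk}-\hat\beta_{jk}|\,]\succ\bar{\mathbf{0}}$ gives single crossings along diagonals is a nice strict statement that essentially recovers the paper's ray argument in the direction $\mathbf{1}$.
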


\begin{remark}
 We conjecture that Theorem~\ref{thm:uniqueMsubsets} can be extended to Case 6b. Extensive computations of $\mathcal{M}_0$, $\mathcal{M}_-$, and $\mathcal{M}_+$, for an $N=3$ agent network, with different network configurations and parameter values, consistently show a unique connected surface $\mathcal{M}_0$ dividing $\mathcal{M}$ into $\mathcal{M}_-$ and $\mathcal{M}_+$.
\end{remark}

The proofs of Theorems~\ref{thm:stability} and~\ref{thm:uniqueMsubsets} make use of the following definition and lemmas.

{\color{black}\begin{lemma}[Neighborhood $E \subset \mathcal{M}'$ of $\mathcal{M}$]
Let $E$ be the union of $\mathcal{M}$ and the neighborhoods $\bar{U} \subset \mathcal{M}'$ of every $\bar{x} \in \partial \mathcal{M}$ described in the proof of Lemma~\ref{lemma:M}. Then, $E\subset \mathcal{M}'$ is a neighborhood of $\mathcal{M}$.
\end{lemma}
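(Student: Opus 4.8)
The plan is to verify directly that $\mathcal{M}$ is contained in the relative interior of $E$ inside $\mathcal{M}'$, which is precisely the definition of $E$ being a neighborhood of $\mathcal{M}$ in $\mathcal{M}'$. Throughout I would identify $\mathcal{M}' = \{(p,\mathbf{0})\mid p \in \mathbb{R}^N\}$ with $\mathbb{R}^N$ through the $p$-coordinate, so that (consistent with Proposition~\ref{def:MboundaryAndInt}) $\mathcal{M}$ is the closed cube $[0,1]^N$, its interior $\textrm{int}(\mathcal{M})$ is the open cube $(0,1)^N$, and $\partial\mathcal{M}$ is the topological boundary of $[0,1]^N$, i.e., the points with at least one coordinate in $\{0,1\}$.

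First I would use the partition $\mathcal{M} = \textrm{int}(\mathcal{M}) \cup \partial\mathcal{M}$ and produce, for each point of $\mathcal{M}$, an open $\mathcal{M}'$-neighborhood lying in $E$. For $x \in \textrm{int}(\mathcal{M})$ this is immediate: $\textrm{int}(\mathcal{M})$ is the open cube $(0,1)^N$, hence open in $\mathcal{M}'$, and it is contained in $\mathcal{M} \subseteq E$. For $\bar{x} \in \partial\mathcal{M}$, the very construction of $E$ adjoins the set $\bar{U}(\bar{x})$ from the proof of Lemma~\ref{lemma:M}; that $\bar{U}(\bar{x})$ is a neighborhood of $\bar{x}$ on $\mathcal{M}'$ and can be taken open, being cut out by the strict inequalities on the coordinates $p'_j$ listed there, so $\bar{U}(\bar{x}) \subseteq E$ furnishes the required open neighborhood. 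Combining the two cases gives $\mathcal{M} \subseteq \textrm{int}_{\mathcal{M}'}(E)$.

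Equivalently — and this is the crispest way to record the argument — since each $\bar{x} \in \partial\mathcal{M}$ lies in its own $\bar{U}(\bar{x})$, we have $\partial\mathcal{M} \subseteq \bigcup_{\bar{x}} \bar{U}(\bar{x})$, and therefore $E = \textrm{int}(\mathcal{M}) \cup \bigcup_{\bar{x} \in \partial\mathcal{M}} \bar{U}(\bar{x})$ is a union of $\mathcal{M}'$-open sets, hence open, and it contains $\mathcal{M}$; an open set containing $\mathcal{M}$ is a neighborhood of $\mathcal{M}$. The only subtlety worth flagging — and the reason the lemma is not vacuous — is that $\mathcal{M}$ itself is \emph{not} a neighborhood of $\mathcal{M}$, because its boundary points $\partial\mathcal{M}$ fail to be interior to $\mathcal{M}$ in $\mathcal{M}'$; the role of the adjoined sets $\bar{U}(\bar{x})$ is exactly to thicken $\mathcal{M}$ transversally across $\partial\mathcal{M}$ so that $\mathcal{M}$ becomes interior to the enlarged set $E$. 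I expect no genuine obstacle here beyond keeping these topological bookkeeping facts straight and ensuring the $\bar{U}(\bar{x})$ are taken open.
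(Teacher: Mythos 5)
Your proof is correct and follows the only natural route: the paper itself states this lemma without proof, treating it as immediate from the construction, and your verification (that $E = \textrm{int}(\mathcal{M}) \cup \bigcup_{\bar{x}\in\partial\mathcal{M}}\bar{U}(\bar{x})$ is a union of $\mathcal{M}'$-open sets containing $\mathcal{M}$, the $\bar{U}$ being open because they are cut out by strict inequalities in the proof of Lemma~\ref{lemma:M}) is exactly the argument the authors leave implicit. Your remark that $\mathcal{M}$ alone is not a neighborhood of itself, which is why the thickening across $\partial\mathcal{M}$ is needed, correctly identifies the whole point of the lemma.
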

}


{\color{black}\begin{lemma}[$J_T$ and $\Lambda$]\label{lem:LambdaFnc}
Let $J_T(p) = B^*(p) - D$ for $(p,\mathbf{0}) \in E$ and
 $\Lambda: E \to \mathbb{R}$, $(p,\mathbf{0}) \mapsto
\lambda_{max}(J_T(p))$. For ease of notation we  use $\Lambda(p)$ for $\Lambda(p, \mathbf{0})$. Then $\Lambda(p)=\lambda_{Tmax}(J_{\cal M}(p, \mathbf{0}))$. 
\end{lemma}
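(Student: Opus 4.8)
The plan is to read the claim directly off the block structure of the IFE Jacobian already exhibited in the proof of Lemma~\ref{lemma:M}, extended verbatim from $\mathcal{M}$ to the slightly larger neighborhood $E$. First I would evaluate the Jacobian of~\eqref{eq:Het-SIRI} at an arbitrary point $(p,\mathbf{0}) \in E$. Because $p^I = \mathbf{0}$ there, every term in $\dot{p}^S$ and the $p^S$-dependent term of $\dot{p}^I$ carries a factor $p_k^I$, so the blocks $\partial \dot{p}^S/\partial p^S$ and $\partial \dot{p}^I/\partial p^S$ both vanish, while $\partial \dot{p}^S/\partial p^I = -P B$ and $\partial \dot{p}^I/\partial p^I = B^*(p) - D = J_T(p)$. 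This gives the block upper-triangular form
\[
J_{\mathcal{M}}(p,\mathbf{0}) =
\begin{bmatrix}
\bar{\mathbf{0}}  & - P B \\
\bar{\mathbf{0}}  & J_T(p)
\end{bmatrix},
\]
which is exactly~\eqref{eq:J_IFE} with $p^{S*}$ replaced by $p$ and which holds for every $(p,\mathbf{0}) \in E$, not just on $\mathcal{M}$.

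Next I would use the triangular structure to split the spectrum of $J_{\mathcal{M}}(p,\mathbf{0})$. The top-left zero block contributes $N$ zero eigenvalues whose eigenvectors span the tangent space to the equilibrium manifold $\mathcal{M}'$; the remaining $N$ eigenvalues, which are by definition the transverse ones, are precisely the eigenvalues of $J_T(p)$. Hence the leading transverse eigenvalue $\lambda_{Tmax}(J_{\mathcal{M}}(p,\mathbf{0}))$ is the leading eigenvalue among those of $J_T(p)$, namely $\lambda_{max}(J_T(p))$, which is $\Lambda(p)$ by the definition in the lemma statement.

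To ensure that $\lambda_{max}(J_T(p))$, and therefore the notion of a \emph{leading} transverse eigenvalue, is well-defined as a real number, I would invoke the construction of $E$: by the preceding lemma defining $E$ and the corresponding argument in the proof of Lemma~\ref{lemma:M}, $J_T(p)$ is an irreducible Metzler matrix for every $(p,\mathbf{0}) \in E$. Proposition~\ref{prop:Metzler}(1) then guarantees $\lambda_{max}(J_T(p)) \in \mathbb{R}$ with multiplicity one, closing the chain of equalities $\Lambda(p) = \lambda_{max}(J_T(p)) = \lambda_{Tmax}(J_{\mathcal{M}}(p,\mathbf{0}))$.

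I expect essentially no analytical obstacle here: the entire content is the observation that the transverse block of the IFE Jacobian is exactly $J_T(p)$, which was already computed in Lemma~\ref{lemma:M}. The only point requiring care is confirming that extending the domain from $\mathcal{M}$ to $E$ does not destroy the Metzler-irreducibility of $J_T(p)$ that makes $\lambda_{max}$ meaningful and real; this is precisely what the definition of $E$ was arranged to secure, so the lemma amounts to formalizing and naming a fact established in passing earlier.
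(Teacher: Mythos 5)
Your proposal is correct and matches the paper's reasoning exactly: the paper states this lemma without a separate proof because the identity $\lambda_{Tmax}(J_{\mathcal{M}}(x)) = \lambda_{max}(J_T(p))$ and its extension to $E$ (with $J_T(p)$ remaining Metzler irreducible there) are precisely what is established in the proof of Lemma~\ref{lemma:M} via the block upper-triangular form~\eqref{eq:J_IFE}. Your only addition is to spell out the Jacobian computation and the role of the definition of $E$, which is a faithful formalization of the same argument.
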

}

\begin{definition}[Stubborn agents]
An agent $j \in \mathcal{V}$ is {\em stubborn} if $(\beta_{jk} -\hat{\beta}_{jk}) = 0$ for all $k \in \mathcal{N}_j$. 
\end{definition}

\begin{lemma}[IFE subsets as level surfaces of $\Lambda$]\label{lemma:MSubsetsAndLambda}
Let 
{\color{black}$\Lambda_c = \{(p,\mathbf{0}) \in E \, | \, \Lambda(p) = c\}$
be the level surface of $\Lambda$ on $E\subset \mathcal{M}'$ corresponding to $c \in \mathbb{R}$. Then,
$\mathcal{M}_0 = \Lambda_0 \cap \mathcal{M}$, $\mathcal{M}_- = \bigcup_{c < 0} \Lambda_c \cap \mathcal{M}$ and $\mathcal{M}_+ = \bigcup_{c > 0} \Lambda_c \cap \mathcal{M}$.
}
\end{lemma}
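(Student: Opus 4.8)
The plan is to prove all three identities by directly unfolding Definition~\ref{def:Msubsets} and substituting the characterization supplied by Lemma~\ref{lem:LambdaFnc}, namely that $\Lambda(p) = \lambda_{Tmax}(J_{\mathcal M}(p,\mathbf{0}))$ for every $(p,\mathbf{0}) \in E$. The essential preliminary observation is that $\mathcal{M} \subseteq E$ (established by the preceding Neighborhood lemma), so $\Lambda$ is defined on all of $\mathcal{M}$ and coincides there with the leading transverse eigenvalue that defines the subsets $\mathcal{M}_-,\mathcal{M}_+,\mathcal{M}_0$. Once this identification is in place, each equality is a purely set-theoretic rewriting.

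First I would treat $\mathcal{M}_0$. By Definition~\ref{def:Msubsets}, $\mathcal{M}_0 = \{x = (p,\mathbf{0}) \in \mathcal{M} \mid \lambda_{Tmax}(J_{\mathcal M}(x)) = 0\}$. Replacing $\lambda_{Tmax}(J_{\mathcal M}(x))$ by $\Lambda(p)$ via Lemma~\ref{lem:LambdaFnc} rewrites this as $\{(p,\mathbf{0}) \in \mathcal{M} \mid \Lambda(p) = 0\}$, and since $\mathcal{M} \subseteq E$ this equals $\{(p,\mathbf{0}) \in E \mid \Lambda(p) = 0\} \cap \mathcal{M} = \Lambda_0 \cap \mathcal{M}$, giving the first claim. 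For $\mathcal{M}_-$ I would argue identically, replacing the equality $\lambda_{Tmax}(J_{\mathcal M}(x)) = 0$ by the strict inequality $\lambda_{Tmax}(J_{\mathcal M}(x)) < 0$; this becomes the condition $\Lambda(p) < 0$, and the set of points of $E$ satisfying $\Lambda(p) < 0$ is precisely $\bigcup_{c < 0} \Lambda_c$, so intersecting with $\mathcal{M}$ yields $\mathcal{M}_- = \bigcup_{c < 0} \Lambda_c \cap \mathcal{M}$ (the two parenthesizations agreeing by distributivity of intersection over union). The argument for $\mathcal{M}_+$ is the mirror image with $c > 0$.

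The main difficulty here is essentially bookkeeping rather than mathematical content: the only substantive ingredient, the equality of $\Lambda$ with the leading transverse eigenvalue, has already been furnished by Lemma~\ref{lem:LambdaFnc}. The single point demanding care is the legitimacy of restricting $\Lambda$ from its domain $E$ down to $\mathcal{M}$, which is valid precisely because $\mathcal{M} \subseteq E$; once this inclusion is invoked, all three identities follow immediately from the definitions, with no eigenvalue computation or topological argument required.
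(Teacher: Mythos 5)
Your proposal is correct and follows exactly the same route as the paper, whose proof consists of the single sentence that the result follows from Definition~\ref{def:Msubsets} and Lemma~\ref{lem:LambdaFnc}; you have simply written out the set-theoretic substitution explicitly. The one point you flag — that $\mathcal{M} \subseteq E$ so $\Lambda$ is defined on all of $\mathcal{M}$ — is indeed the only hypothesis needed, and it is supplied by the preceding neighborhood lemma as you note.
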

\begin{proof}
This follows from Definition~\ref{def:Msubsets} and Lemma~\ref{lem:LambdaFnc}. 
\end{proof}
\begin{lemma}[Gradient of $\Lambda$]\label{lemma:LambdaDyn}
For $\tilde{x} = (p,\mathbf{0})\in E$, let $w^T,v \in \mathbb{R}^N$ be left and right eigenvectors of $J_T(p)$ for $\Lambda(p)$. Then, $\Lambda$ is smooth on $E$, i.e., $\Lambda(\cdot) \in C^{\infty}(E)$, with partial derivatives
\begin{equation}\label{eq:LambdaDyn}
    \frac{\partial \Lambda}{\partial p_j}(p)  = w_j 
    \sum_{k=1}^N (\beta_{jk} - \hat{\beta}_{jk})v_k
\end{equation}
and gradient
\begin{equation}\label{eq:gradientLambda}
    \nabla \Lambda (p) =
    \textnormal{diag}(w)(B-\hat{B})v.
\end{equation}
In addition, the following hold.
\begin{itemize}
    \item If $B = \hat{B}$, all points $\tilde{x} \in E$ are critical points of $\Lambda$. 
    \item If $j$ is a stubborn agent, $(\partial \Lambda/\partial p_j)(p) = 0$ for all $\tilde{x} \in E$.
    \item If $B \succ \hat{B}$, and there are no stubborn agents in $\mathcal{G}$, $\nabla \Lambda (p) \gg \mathbf{0}$ for all $\tilde{x} \in E$, and $\Lambda$ has no critical points.
    \item If $\hat{B} \succ B$, and there are no stubborn agents in $\mathcal{G}$,  $\nabla \Lambda (p) \ll \mathbf{0}$ for all $\tilde{x} \in E$ and $\Lambda$ has no critical points. 
    \item If $B \neq \hat{B}$ then either $\Lambda$ has no critical points or all points $\tilde{x} \in E$ are critical points of $\Lambda$.
\end{itemize}
\end{lemma}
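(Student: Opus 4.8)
The plan is to treat $\Lambda(p)=\lambda_{max}(J_T(p))$ as a simple eigenvalue of a matrix that depends affinely on $p$, and to apply standard first-order eigenvalue perturbation. First I would record the affine structure. Writing $C:=B-\hat{B}$, a direct expansion of $B^*(p)=(\mathbb{I}-P)\hat{B}+PB$ gives $J_T(p)=B^*(p)-D=(\hat{B}-D)+\textrm{diag}(p)C$, so that $\partial J_T/\partial p_j=e^j(e^j)^T C$ (only row $j$ survives, and it equals row $j$ of $C$). By the neighborhood lemma defining $E$ and Proposition~\ref{prop:Metzler}, $J_T(p)$ is irreducible Metzler for every $(p,\mathbf{0})\in E$; hence $\Lambda(p)$ is real and simple, with strictly positive left and right eigenvectors $w^T,v\gg\mathbf{0}$. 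For smoothness I would use the characteristic polynomial $\chi(p,\lambda)=\det(\lambda\mathbb{I}-J_T(p))$, which is polynomial (hence $C^\infty$) in $(p,\lambda)$; since $\Lambda(p)$ is a simple root, $\partial_\lambda\chi(p,\Lambda(p))\neq0$, and the implicit function theorem gives $\Lambda\in C^\infty(E)$ (indeed real-analytic).

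For the gradient formula I would normalize the eigenvectors so that $w^Tv=1$, which is possible since $w,v\gg\mathbf{0}$. Differentiating the eigen-equation $J_T(p)v=\Lambda v$ with respect to $p_j$, left-multiplying by $w^T$, and using $w^TJ_T=\Lambda w^T$ together with $w^Tv=1$, the terms in $\partial v/\partial p_j$ cancel and one is left with $\partial\Lambda/\partial p_j=w^T(\partial J_T/\partial p_j)v=w_j\,[(B-\hat{B})v]_j=w_j\sum_{k}(\beta_{jk}-\hat{\beta}_{jk})v_k$, which assembles into $\nabla\Lambda(p)=\textrm{diag}(w)(B-\hat{B})v$.

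The first four bullets then follow by inspection of this formula, using $w,v\gg\mathbf{0}$. If $B=\hat{B}$ then $B-\hat{B}=\bar{\mathbf{0}}$, so $\nabla\Lambda\equiv\mathbf{0}$ and every point is critical. If $j$ is stubborn, row $j$ of $B-\hat{B}$ vanishes, so $\partial\Lambda/\partial p_j=0$. If $B\succ\hat{B}$ with no stubborn agents, then for each $j$ the $j$-th row of $B-\hat{B}$ is nonnegative and, by non-stubbornness, has a strictly positive entry; since $v\gg\mathbf{0}$ and $w_j>0$, each $\partial\Lambda/\partial p_j>0$, so $\nabla\Lambda\gg\mathbf{0}$ and $\Lambda$ has no critical points. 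The case $\hat{B}\succ B$ is symmetric, with $\nabla\Lambda\ll\mathbf{0}$.

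The step I expect to be the crux is the final dichotomy for $B\neq\hat{B}$. The key observation is that a single critical point already forces $\Lambda$ to be constant. Suppose $p^*$ is critical. Since $w(p^*)\gg\mathbf{0}$, the equation $\textrm{diag}(w)(B-\hat{B})v=\mathbf{0}$ forces $(B-\hat{B})v^*=\mathbf{0}$, where $v^*:=v(p^*)$. Then for \emph{every} $p$, $J_T(p)v^*=(\hat{B}-D)v^*+\textrm{diag}(p)(B-\hat{B})v^*=(\hat{B}-D)v^*$, which is $p$-independent; evaluating at $p^*$ shows $(\hat{B}-D)v^*=\Lambda(p^*)v^*$. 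Hence $J_T(p)v^*=\Lambda(p^*)v^*$ for all $p\in E$, and since $v^*\gg\mathbf{0}$, Proposition~\ref{prop:Metzler}(2) identifies $v^*$ as the leading eigenvector of the irreducible Metzler matrix $J_T(p)$ (any non-leading eigenvector has a negative entry). Therefore $\Lambda(p)=\Lambda(p^*)$ for all $p\in E$, so $\Lambda$ is constant and every point is critical. The contrapositive gives the dichotomy: either $\Lambda$ has no critical point, or every point of $E$ is critical.
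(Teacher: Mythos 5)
Your proposal is correct and follows essentially the same route as the paper: irreducibility of $J_T(p)$ gives a simple real leading eigenvalue with positive left and right eigenvectors, differentiating the eigen-equation with the normalization $w^Tv=1$ yields the gradient formula, the first four bullets follow by inspection, and the final dichotomy is obtained exactly as in the paper by showing that a single critical point forces $(B-\hat{B})v^*=\mathbf{0}$, making $v^*$ a $p$-independent positive (hence leading) eigenvector of every $J_T(p)$. The only cosmetic difference is that you derive smoothness from the characteristic polynomial and the implicit function theorem, whereas the paper cites Wilkinson for the same fact.
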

\begin{proof}
Let $\tilde{x} = (p,\mathbf{0}) \in E$. By the proof of Lemma~\ref{lemma:M}, $J_T(p)$ is Metzler irreducible. By Proposition~\ref{prop:Metzler}, $\lambda_{max}(J_T(p))\in \mathbb{R}$ and has multiplicity one. Thus by~\cite[p.~66-67]{wilkinson1965algebraic}, $\Lambda(\cdot) \in C^\infty(E)$.




Differentiating the right-eigenvector equation $J_T(p) v= \Lambda(p) v$ with respect to $p_j$, and premultiplying by  $w^T$ we get
\begin{equation*}
        w^T \frac{\partial J_T}{\partial p_j}v + w^T J_T  \frac{\partial v}{\partial p_j} = w^T v \frac{\partial \Lambda}{\partial p_j} +  w^T \Lambda \frac{\partial v}{\partial p_j},
\end{equation*}
where all terms are evaluated at $p$, $\partial J_T/\partial p_j (p) = \{ z_{kl} \}$ and
\begin{equation*}
z_{kl} = 
    \begin{cases}
    \beta_{kl} - \hat{\beta}_{kl} & k=j \ \textrm{and} \ (k,l) \in \mathcal{E}, \\
    0 & \textrm{otherwise}.
    \end{cases}  
\end{equation*}
Because  $\Lambda(p)$ has multiplicity one, we can always pick $w$ such that $w^Tv=1$. Using $w^T J_T(p) = w^T\Lambda(p)$, we obtain
\begin{equation}\label{eq:PartialsLambda}
          \frac{\partial \Lambda}{\partial p_j}(p)  = 
          w^T \frac{\partial J_T}{\partial p_j}(p) v = w_j
          \sum_{k=1}^N (\beta_{jk} - \hat{\beta}_{jk})v_k.
\end{equation}
In vector form, \eqref{eq:PartialsLambda} becomes $\nabla \Lambda (p) =
    \textnormal{diag}(w)(B-\hat{B})v.$

By Proposition~\ref{prop:Metzler} $w,v\gg\mathbf{0}$. If $B =\hat{B}$, then $\nabla \Lambda (p) =\mathbf{0}$ for all $\tilde{x} \in E$. If $j$ is a stubborn agent, then $\beta_{jk}-\hat{\beta}_{jk} =0$ for all $k \in \mathcal{N}_j$. Therefore, $(\partial \Lambda/\partial p_j)(p) = 0$ for all $\tilde{x} \in E$. If $B \succ \hat{B}$ and there are no stubborn agents in $\mathcal{G}$, then $\sum_{k=1}^N (\beta_{jk} - \hat{\beta}_{jk}) v_k > 0$ for any $j$. Therefore, $\nabla \Lambda (p) \gg \mathbf{0}$ for all $\tilde{x} \in E$.
A similar argument is made if $\hat{B} \succ B$, and there are no stubborn agents in $\mathcal{G}$. 

By Proposition~\ref{prop:gradSys} and~\eqref{eq:gradientLambda}, $\tilde{x}$ is a critical point of $\Lambda$ 
if and only if $(B-\hat{B})v = \mathbf{0}$. 
Let $\tilde{x}_0 = (p_0,\mathbf{0}) \in E$ be a critical point, then $(B-\hat{B})v_0 = \mathbf{0}$. 
Thus, for any $p$, we have
\begin{align}\label{eq:lemma3proof}
    J_T(p)v_0 &= (B^*(p)-D)v_0 \nonumber \\
    &= (\hat{B} - D)v_0 + \textrm{diag}(p) (B -\hat{B})v_0 \nonumber \\
    &= (\hat{B} - D)v_0.
\end{align}
In particular, $(\hat{B} - D)v_0 = \Lambda(p_0)v_0$. Hence, $(\Lambda(p_0),v_0)$ is an eigenpair of $\hat{B}-D$ and by~\eqref{eq:lemma3proof} also an eigenpair of $J_T(p)$ for any $p$. 
Since $\Lambda(p)$ has multiplicity one and $v$ is the only eigenvector satisfying $v \gg \mathbf{0}$, then $(\Lambda(p),v)=(\Lambda(p_0),v_0)$ and   $\tilde{x} \in E$ is a critical point of $\Lambda$.
\end{proof}

\begin{lemma}[$\Lambda_c$ with stubborn agents]\label{lemma:stubbornlevelsurfaces}
If agents $1,\dots,k$, $k<N$, are stubborn, then at any $(p,\mathbf{0}) \in E$, the subspace spanned by $\{e^1,\dots,e^k\}$ is tangent to the level surface $\Lambda_c$ with $p \in \Lambda_c$. 
\end{lemma}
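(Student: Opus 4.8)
The plan is to deduce the result directly from the gradient computation in Lemma~\ref{lemma:LambdaDyn}, combined with the standard characterization of the tangent space of a level surface. The essential observation is that, by the definition of a stubborn agent, $\beta_{jk}-\hat{\beta}_{jk}=0$ for all $k \in \mathcal{N}_j$ whenever $j \in \{1,\dots,k\}$, so formula~\eqref{eq:LambdaDyn} in Lemma~\ref{lemma:LambdaDyn} gives $(\partial\Lambda/\partial p_j)(p)=0$ for every stubborn agent $j$ and at \emph{every} $(p,\mathbf{0})\in E$. Equivalently, the $j$-th component of the gradient $\nabla\Lambda(p)$ in~\eqref{eq:gradientLambda} vanishes for $j=1,\dots,k$ throughout $E$.

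First I would fix $(p,\mathbf{0})\in E$ with $\Lambda(p)=c$ and take an arbitrary vector $u \in \mathrm{span}\{e^1,\dots,e^k\}$, so that $u=\sum_{j=1}^{k}u_j e^j$ is supported on the stubborn coordinates. Since $E\subset\mathcal{M}'$ is a neighborhood of $\mathcal{M}$ and $\Lambda\in C^{\infty}(E)$ by Lemma~\ref{lemma:LambdaDyn}, for sufficiently small $t$ the segment $\gamma(t)=(p+tu,\mathbf{0})$ stays in $E$, and
\[
\frac{d}{dt}\Lambda(p+tu)=\sum_{j=1}^{k}u_j\,\frac{\partial\Lambda}{\partial p_j}(p+tu)=0,
\]
because every term carries a partial derivative of $\Lambda$ with respect to a stubborn coordinate, which is identically zero on $E$. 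Hence $\Lambda(p+tu)\equiv c$, so $\gamma(t)\in\Lambda_c$ for all small $t$, and $u=\gamma'(0)$ is therefore tangent to $\Lambda_c$ at $(p,\mathbf{0})$. As $u$ ranges over $\mathrm{span}\{e^1,\dots,e^k\}$, the entire subspace is tangent to $\Lambda_c$.

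Equivalently, one could invoke the directional-derivative identity $D_{e^j}\Lambda(p)=(e^j)^T\nabla\Lambda(p)$ (as in Proposition~\ref{prop:gradSys}, item~4), which equals $(\partial\Lambda/\partial p_j)(p)=0$, so each $e^j$ is orthogonal to the normal $\nabla\Lambda(p)$ and hence tangent to the level surface. I would nonetheless present the curve argument as primary, since it does not require $c$ to be a regular value: at a critical point the level set $\Lambda_c$ need not be a smooth $(N-1)$-dimensional surface and the orthogonality characterization loses its meaning, whereas the segment $p+tu$ provably remains inside $\Lambda_c$ in all cases. The only point requiring care — and the main (minor) obstacle — is that the vanishing of the stubborn partial derivatives must hold along the whole segment, not just at the base point; this is exactly what Lemma~\ref{lemma:LambdaDyn} delivers, since it establishes $(\partial\Lambda/\partial p_j)\equiv 0$ on all of $E$. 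The hypothesis $k<N$ ensures $\Lambda$ is not forced to be globally constant, so that the level surfaces remain the genuine objects of interest.
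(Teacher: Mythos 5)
Your proposal is correct and takes essentially the same route as the paper: both rest on Lemma~\ref{lemma:LambdaDyn} giving $(\partial\Lambda/\partial p_j)(p)=0$ on all of $E$ for stubborn $j$, and the paper concludes tangency directly via the directional-derivative identity of Proposition~\ref{prop:gradSys}, which is exactly your ``equivalent'' second argument. Your primary curve-based version is a slightly more careful packaging (it sidesteps the regular-value issue you flag), but it is not a genuinely different proof.
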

\begin{proof}
By Lemma~\ref{lemma:LambdaDyn}, for any stubborn agent $j$, $(e^j)^T \nabla \Lambda(p) = 0$ for any $(p,\mathbf{0}) \in E$. By Proposition~\ref{prop:gradSys}, $e^j$ is tangent to the level surface $\Lambda_c$ with $p \in \Lambda_c$. 
\end{proof} 

\begin{lemma}[Maximum and minimum values of $\Lambda$ on $\mathcal{M}$]\label{lemma:LambdaMaxMin}
$\Lambda$ achieves its global maximum and minimum $c_{max},c_{min}$ on $\mathcal{M}$ at one or more points in $\partial \mathcal{M}$. In Cases 2, 4, 5, and 6a, there exist unique corner points $(p^{max},\mathbf{0}) \in \hat{\mathcal{M}}$,  $(p^{min},\mathbf{0}) \in \hat{\mathcal{M}}$ such that $\Lambda(p^{max}) = c_{max}$, $\Lambda(p^{min}) = c_{min}$, $R_{max} = \rho(B^*(p^{max})D^{-1})$, and $R_{min} = \rho(B^*(p^{min})D^{-1})$. Moreover, $p^{max}$ and $p^{min}$ are the respective unique global maximum and minimum points of $\Lambda$ on $\mathcal{M}$ if and only if there are no stubborn agents in $\mathcal{G}$.
\end{lemma}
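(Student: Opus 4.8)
The plan is to treat $\Lambda$ as a smooth function (Lemma~\ref{lemma:LambdaDyn}) on the compact cube $\mathcal{M}\cong[0,1]^N$, so that it attains its global values $c_{max}$ and $c_{min}$, and then to locate these extrema from the gradient formula. For the first sentence I would use the final dichotomy of Lemma~\ref{lemma:LambdaDyn}: either $\Lambda$ has no critical point in $E$, or $\Lambda$ is constant (this covers $B=\hat B$ and the ``all points critical'' alternative, in both of which $\nabla\Lambda\equiv\mathbf{0}$ on the connected set $E$). If $\Lambda$ is constant, then $c_{max}=c_{min}$ is attained at every point, in particular on $\partial\mathcal{M}$. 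If $\Lambda$ has no critical point, then no point of $\textrm{int}(\mathcal{M})$ can be a local extremum, since an interior extremum would force $\nabla\Lambda=\mathbf{0}$; hence both extrema of $\Lambda|_{\mathcal{M}}$ lie on $\partial\mathcal{M}$. This establishes the first claim in all of Cases 2--6.

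For Cases 2, 4, 5, and 6a I would exploit that $\Lambda$ is coordinatewise monotone with a direction that is independent of $p$. From Lemma~\ref{lemma:LambdaDyn}, $\partial\Lambda/\partial p_j(p)=w_j\sum_{k}(\beta_{jk}-\hat\beta_{jk})v_k$, where $w,v\gg\mathbf{0}$ are the Perron eigenvectors of $J_T(p)$. In exactly these cases the family $\{\beta_{jk}-\hat\beta_{jk}\}_{k\in\mathcal{N}_j}$ is one-signed for each agent (globally in Cases 2, 4, 5; per agent in Case 6a), so each agent $j$ is of one of three kinds: $\partial\Lambda/\partial p_j>0$ for all $p$, $\partial\Lambda/\partial p_j<0$ for all $p$, or $\partial\Lambda/\partial p_j\equiv 0$ (a stubborn agent, by the definition of stubborn and Lemma~\ref{lemma:LambdaDyn}). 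Because $w,v\gg\mathbf{0}$ at \emph{every} $p\in E$, these signs never flip, so $\Lambda$ is monotone in each coordinate over the whole cube. Pushing each non-stubborn coordinate to the endpoint that increases (resp.\ decreases) $\Lambda$, and assigning the stubborn coordinates any value in $\{0,1\}$, produces a corner $(p^{max},\mathbf{0})\in\hat{\mathcal{M}}$ attaining $c_{max}$ and a corner $(p^{min},\mathbf{0})\in\hat{\mathcal{M}}$ attaining $c_{min}$.

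To match these corners with the extreme reproduction numbers I would observe that $R(p)=\rho(B^*(p)D^{-1})$ inherits the same coordinatewise monotonicity: since $(B^*(p))_{jk}=(1-p_j)\hat\beta_{jk}+p_j\beta_{jk}$ moves monotonically in $p_j$ in the direction set by $\beta_{jk}-\hat\beta_{jk}$, increasing a coordinate weakly increases or weakly decreases every entry of the nonnegative matrix $B^*(p)D^{-1}$ in one fixed direction, so by monotonicity of the Perron root (Proposition~\ref{prop:Metzler}) $R$ and $\Lambda$ are co-monotone. Hence the corner maximizing $\Lambda$ also maximizes $R$ and the corner minimizing $\Lambda$ minimizes $R$, giving $R_{max}=\rho(B^*(p^{max})D^{-1})$ and $R_{min}=\rho(B^*(p^{min})D^{-1})$.

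Finally, for the uniqueness equivalence I would note that monotonicity is \emph{strict} in the coordinate of a non-stubborn agent, since there $\sum_k(\beta_{jk}-\hat\beta_{jk})v_k$ is strictly one-signed ($v\gg\mathbf{0}$ and some $\beta_{jk}-\hat\beta_{jk}\neq 0$). If $\mathcal{G}$ has no stubborn agents, $\Lambda$ is strictly monotone in every coordinate, so the extremizing endpoint is forced in each coordinate and $p^{max},p^{min}$ are the unique global maximizer and minimizer of $\Lambda$ on $\mathcal{M}$. Conversely, if agent $j$ is stubborn then $\Lambda$ is constant along $p_j$ (Lemma~\ref{lemma:stubbornlevelsurfaces}), so the maximum and minimum are each attained on a positive-dimensional face of $\mathcal{M}$ and the extremizer is not unique. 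The delicate point throughout is precisely the $p$-independence of the sign of $\partial\Lambda/\partial p_j$: it is what upgrades the pointwise gradient formula of Lemma~\ref{lemma:LambdaDyn} into global monotonicity on the cube, and it is exactly the property that fails in Case 6b, where $\sum_k(\beta_{jk}-\hat\beta_{jk})v_k$ can change sign with the eigenvector $v(p)$ and the extrema need not sit at corners.
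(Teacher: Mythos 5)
Your proof is correct and follows essentially the same route as the paper: the Extreme Value Theorem combined with the absence of interior critical points gives the boundary claim, coordinatewise monotonicity of $\Lambda$ with a $p$-independent sign per agent pushes the extrema to corners of $\hat{\mathcal{M}}$ and identifies them with $R_{max}$ and $R_{min}$, and strict versus vanishing monotonicity in each coordinate settles the uniqueness equivalence via stubborn agents. The only cosmetic difference is that you derive the monotonicity from the gradient formula of Lemma~\ref{lemma:LambdaDyn}, whereas the paper derives it from the entrywise monotonicity of $B^*(p)$ in each $p_j$ together with the Perron-root monotonicity of Proposition~\ref{prop:Metzler}; the two justifications are equivalent.
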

\begin{proof}
The case in which every point in $E$ is a critical point of $\Lambda$ is trivial.  Assume $\Lambda$ has no critical points in $E$. Since $\mathcal{M} \subset \mathcal{M}'$ is a compact set and $\Lambda$ is continuous, then by the Extreme Value Theorem, $\Lambda$ achieves 
$c_{max}$ and $c_{min}$ at one or more points in $\partial \mathcal{M}$. Let $(p^{max},\mathbf{0}) \in \partial \mathcal{M}$ and $(p^{min},\mathbf{0}) \in \partial \mathcal{M}$ be points such that $\Lambda(p^{max})=c_{max}$ and $\Lambda(p^{min})=c_{min}$. 

Let $(p^S,\mathbf{0}) \in \mathcal{M}$. Assume there are no stubborn agents. The components $(J_T(p^S))_{jk} = (1-p_j^S) \hat{\beta}_{jk} + p_j^S \beta_{jk}$, $j \neq k$, are maximized and minimized at $p_j^S =0$ or $p_j^S = 1$ for all $j$. If $B \succ \hat{B}$ (Cases 2 and 4), the entries $(J_T(p^S))_{jk}$ are maximized at $p_j^S=1$ and minimized at $p_j^S=0$ for all $j$. It follows from Proposition~\ref{prop:Metzler} that $p^{max} = \mathbf{1} \in \hat{\mathcal{M}}$ and $p^{min} = \mathbf{0} \in \hat{\mathcal{M}}$. Using a similar argument if $\hat{B} \succ B$ (Case 5) $p^{max} = \mathbf{0} \in \hat{\mathcal{M}}$ and $p^{min} = \mathbf{1} \in \hat{\mathcal{M}}$.  Further, in Case 6a with no stubborn agents, $\beta_{jk} - \hat{\beta}_{jk} > 0$ for all $k \in \mathcal{N}_j$ or $\beta_{jk} - \hat{\beta}_{jk} < 0$ for all $k \in \mathcal{N}_j$. Thus, similarly, $p_j^{max},p_j^{min} \in \{0,1\}$ for all $j$.
For any $p^S \neq p^{max},p^{min}$, it follows that $B^*(p^{max})D^{-1} \succ B^*(p^S)D^{-1} \succ B^*(p^{min})D^{-1}$. By Proposition~\ref{prop:Metzler}, $R_{max} = \rho(B^*(p^{max})D^{-1})$ and $R_{min} = \rho(B^*(p^{min})D^{-1})$.

Now suppose there are stubborn agents $1,\dots,k$, $k<N$. Let $v_e \in {\rm span}\{e^1,\dots,e^k \}$. Then, by Lemma~\ref{lemma:stubbornlevelsurfaces}, if $p^{max} \in \Lambda_{c_{max}}$ then $p^{max} + v_e \in \Lambda_{c_{max}}$ as long as $(p^{max} + v_e,\mathbf{0}) \in E$. Similarly, if $p^{min} \in \Lambda_{c_{min}}$ then $p^{min} + v_e \in \Lambda_{c_{min}}$ as long as $(p^{min} + v_e,\mathbf{0}) \in E$. 
\end{proof}

We now present the proof for Theorems~\ref{thm:stability} and~\ref{thm:uniqueMsubsets}.

\begin{proof}[Proof of Theorem~\ref{thm:stability}]
To prove~\ref{itm:IFE_stable}, assume $R_{max} < 1$. By Proposition~\ref{prop:MetzlerRS}, $\lambda_{Tmax}(J_\mathcal{M}(x)) < 0$ for all $x \in \mathcal{M}$. Therefore, $\mathcal{M}_- = \mathcal{M}$, $\mathcal{M}_+ = \emptyset$, and $\mathcal{M}_0 = \emptyset$. The proof of~\ref{itm:IFE_unstable} and~\ref{itm:IFE_Rmax=Rmin=1} follow similarly. 
To prove~\ref{itm:IFE_Rmax=1}, assume $R_{min} < R_{max} = 1$. By Proposition~\ref{prop:MetzlerRS}, $\max_{x \in \mathcal{M}} \lambda_{Tmax}(J_\mathcal{M}(x)) = 0$. Therefore, $\mathcal{M}_0 \neq \emptyset$ and $\mathcal{M}_- = \mathcal{M} \setminus \mathcal{M}_0$. By Lemma~\ref{lemma:LambdaMaxMin}, $\mathcal{M}_0 \subset \partial \mathcal{M}$.
The proof of~\ref{itm:IFE_Rmin=1} follows similarly.

To prove~\ref{itm:IFE_mixed}, assume {\color{black}$R_{max} > 1 > R_{min}$}. By Proposition~\ref{prop:MetzlerRS} and Lemma~\ref{lemma:LambdaMaxMin}, there exist points $x_{max} = (p_{max}^S,\mathbf{0})$ and $x_{min} = (p_{min}^S,\mathbf{0})$ in $\partial \mathcal{M}$ such that $\lambda_{Tmax}(J_\mathcal{M}(x_{max})) >0$ and $\lambda_{Tmax}(J_\mathcal{M}(x_{min})) < 0$. By the continuity of $\Lambda$ on $E$ 
and Lemma~\ref{lemma:MSubsetsAndLambda}, it follows that $\mathcal{M}_-,\mathcal{M}_+,\mathcal{M}_0 \neq \emptyset$, with each subset consisting of $n_-,n_+$, and $n_0$ connected sets, respectively, where each of the $n_0$ center sets 
separates $n_-$ stable connected sets from $n_+$ unstable connected sets. 


Since, by Lemma~\ref{lemma:LambdaDyn}, $\Lambda$ has no critical points in $E$, it follows that $c$ is a regular value of $\Lambda$ for any $c \in \mathbb{R}$. Hence, by the Implicit Function Theorem, every center connected set $M_0$ is an $(N-1)$-dimensional smooth hypersurface, and every stable and unstable connected set is an $N$-dimensional hypervolume.

Since $\Lambda$ has no critial points in $E$, the gradient dynamics of $\Lambda$ have no equilibria in $\mathcal{M} \subset E$. Thus, no center connected set $M_0$ in $\mathcal{M}_0$ is compact in $\textrm{int} (\mathcal{M})$ in any direction, since by Proposition~\ref{prop:gradSys} there cannot be any $\alpha$-limit points or $\omega$-limit points in $\mathcal{M}$. Thus, $\partial M_0$ in $\mathcal{M}$ must be contained in $\partial \mathcal{M}$.
\end{proof} 

\begin{proof}[Proof of Theorem~\ref{thm:uniqueMsubsets}]
Let {\color{black}$R_{max} > 1 > R_{min}$}. In Cases 2, 4, 5 and 6a, by Lemma~\ref{lemma:LambdaMaxMin}, there exists a unique corner point $x^{min} = (p^{min},\mathbf{0}) \in \hat{\mathcal{M}}$ where $\Lambda(p^{min}) \leq \Lambda(p^S)$ for any $x = (p^S,\mathbf{0}) \in \mathcal{M}$, and $R_{min} = \rho(B^*(p^{min})D^{-1})$. {\color{black}By Definition~\ref{def:R}} and Proposition~\ref{prop:MetzlerRS}, it follows that $\Lambda(p^{min}) < 0$.

Given $p^S$, there exists a point $(p^{end},\mathbf{0}) \in \partial \mathcal{M}$ such that the line connecting $p^{min}$ to $p^{end}$ passes through $p^S$.  Each point on the line is parameterized by $r \in [0,1]$ as follows: $s(r,p^{end}) = \{(p(r),\mathbf{0}) \in \mathcal{M} \, | \, p(r) = (1-r) p^{min} + r p^{end}\}$.

Let $\bar{p} = p^{end} - p^{min}$.  Then, $\bar{p}$ is tangent to $s(r,p^{end})$, and $\bar{p}_j \geq 0$ if $p^{min}_j = 0$ and $\bar{p}_j \leq 0$ if $p^{min}_j = 1$, for all $j$. By Proposition~\ref{prop:gradSys}, the directional derivative of $\Lambda$ at $s(r,p^{end})$ in the direction $\bar{p}$ is $D_{\bar{p}}\Lambda(s(r,p^{end})) = \bar{p}^T \nabla \Lambda(s(r,p^{end}))$.

By Lemma~\ref{lemma:LambdaMaxMin}, if $B \succ \hat{B}$ (Cases 2 and 4), then $p^{min} = \mathbf{0}$ and $\bar{p} \succ \mathbf{0}$. By Lemma~\ref{lemma:LambdaDyn}, $\nabla \Lambda(s(r,p^{end})) \succ 0$ and so $\bar{p}^T \nabla\Lambda(s(r,p^{end})) \geq 0$ for any $r \in [0,1]$. Similarly, for Cases 5 and 6a, $\bar{p}^T \nabla\Lambda(s(r,p^{end})) \geq 0$ for all $r\in [0,1]$.

If $\bar{p}^T \nabla\Lambda(s(r,p^{end})) = 0$, $\Lambda$ is constant along the line $s(r,p^{end})$, $r\in[0,1]$, and the line describes a level surface $\Lambda_c$. Moreover, since $\Lambda(p^{min}) < 0$, by Lemma~\ref{lemma:MSubsetsAndLambda} $c \neq 0$, i.e., $\mathcal{M}_0$ does not intersect $\Lambda_c$. For all other lines, $\bar{p}^T \nabla\Lambda(s(r,p^{end})) > 0$, which implies that $\Lambda$ is strictly increasing from a negative value at the corner $(p^{min},\mathbf{0})$ to the value at $(p^{end},\mathbf{0})\in \partial\mathcal{M}$.  By Theorem~\ref{thm:stability}, $\partial \mathcal{M}_0 \subset \partial \mathcal{M}$. Thus, there is only the possibility of a single crossing of $\Lambda_0$ on each of these lines. By Lemma~\ref{lemma:MSubsetsAndLambda} there is a unique center connected hypersurface.  
\end{proof}

\section{Reproduction Numbers Predict Behavioral Regimes}\label{sec:DynamicRegimes}

In this section we prove our third theorem, which provides conditions
that determine whether solutions of \eqref{eq:Het-SIRI} converge to a point in the IFE set $\mathcal{M}$ or to the EE as $t \to \infty$. We show that the basic and extreme basic reproduction numbers $R_0,R_1,R_{min},R_{max}$ distinguish four behavioral regimes in the network SIRI model, each characterized by qualitatively different transient and steady-state behaviors.

\subsection{The $\omega$-limit Set of Solutions}
The components of $p^S$ decrease monotonically along solutions of~\eqref{eq:Het-SIRI}. Here we show that this monotonicity implies that all solutions either converge to a point in $\mathcal{M}$ or to the EE as $t \to \infty$. Moreover, this means that when the EE is not an equilibrium of the dynamics, the infection cannot survive in the network and all solutions reach an IFE point in $\mathcal{M}$. The results in this section are valid even if $\hat{B}$ is not irreducible.

\begin{lemma}\label{lemma:IFEorEE_as_t2infty}
  Let $y(t,y_0) = (p^S(t),p^I(t))$ be the solution of~\eqref{eq:Het-SIRI} with initial condition  $y_0 \in \Delta_N$. Then the following hold:
  \begin{itemize}
      \item {\color{black} Every point in the $\omega$-limit set $\Omega(y_0)$ of $y(t,y_0)$ is an equilibrium of~\eqref{eq:Het-SIRI}. 
      \item $y(t,y_0)$ converges to a point in $\mathcal{M}$ as $t \to \infty$ if $R_1 \leq 1$.
      \item $y(t,y_0)$ converges to a point in $\mathcal{M}$ or to the EE as $t \to \infty$ if $R_1 > 1$.}
  \end{itemize}
\end{lemma}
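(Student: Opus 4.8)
The plan is to exploit the monotonicity of $p^S$ noted at the start of the section. Since $\dot p_j^S = -p_j^S\sum_k \beta_{jk}p_k^I \le 0$ while $p_j^S\ge 0$, each component $p_j^S(t)$ is nonincreasing and bounded below, hence converges to a limit $p_j^{S\infty}$. Because $\Delta_N$ is compact the orbit is bounded, so its $\omega$-limit set $\Omega(y_0)$ is a nonempty, compact, connected, invariant set, and every point of it has $p^S = p^{S\infty}$. Invariance then forces $\dot p^S \equiv \mathbf{0}$ on $\Omega(y_0)$, i.e. $P^{S\infty}Bp^I = \mathbf{0}$ there. As in the proof of Proposition~\ref{prop:equilibria}, the fact that $\hat B$ vanishes wherever $B$ vanishes (neighbor-by-neighbor propagation) upgrades this to $P^{S\infty}\hat B p^I = \mathbf{0}$ on $\Omega(y_0)$ as well.

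The next step is to substitute these two identities into the $p^I$ equation of~\eqref{eq:Het-SIRI}. Since $B^*(p^{S\infty})p^I = \hat B p^I - P^{S\infty}\hat B p^I + P^{S\infty}Bp^I = \hat B p^I$ on $\Omega(y_0)$, the restriction of the flow to $\Omega(y_0)$ collapses to the network SIS dynamics~\eqref{eq:SIS} with infection matrix $\hat B$, namely $\dot p^I = (\hat B - D)p^I - P^I\hat B p^I$ (compare Remark~\ref{remark:SIRI_SIS_equivalence}). Thus $\Omega(y_0)$ is an invariant set of this SIS system, whose global behavior is known: by the cited Lajmanovich--Yorke and Fall et al. results, the SIS system with irreducible $\hat B$ admits a strict Lyapunov function and every solution converges to an equilibrium, which is $p^I=\mathbf{0}$ when $R_1=\rho(\hat B D^{-1})\le 1$, and either $p^I=\mathbf{0}$ or the unique positive EE~\eqref{eq:EE_j} when $R_1>1$.

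I would then upgrade ``invariant set'' to ``single equilibrium point.'' Because the reduced SIS flow is gradient-like with isolated equilibria (Proposition~\ref{prop:equilibria} together with the uniqueness in Proposition~\ref{prop:EEstrong}), a compact connected internally chain-transitive invariant set such as $\Omega(y_0)$ can carry no nontrivial recurrent SIS orbit and must reduce to one equilibrium; the constraint $P^{S\infty}Bp^I=\mathbf{0}$ then classifies it. If $\Omega(y_0)$ contained the positive EE, where $p^I\gg\mathbf{0}$ and hence $Bp^I\gg\mathbf{0}$, the constraint would force $p^{S\infty}=\mathbf{0}$; so whenever $p^{S\infty}\neq\mathbf{0}$ the only possibility is the IFE point $(p^{S\infty},\mathbf{0})\in\mathcal{M}$. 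This yields the three conclusions: every $\omega$-limit point is an equilibrium; if $R_1\le 1$ no positive EE exists, so $\Omega(y_0)=\{(p^{S\infty},\mathbf{0})\}\subset\mathcal{M}$; and if $R_1>1$, $\Omega(y_0)$ is either such an IFE point or the EE.

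The main obstacle is precisely this last step---passing from ``$\Omega(y_0)$ is an invariant set on which the dynamics are SIS-like'' to ``$\Omega(y_0)$ is a single equilibrium.'' Convergence of $p^S$ does not by itself force convergence of $p^I$, and along the original trajectory the $p^I$ equation is only asymptotically (not exactly) autonomous; making the reduction rigorous requires either invoking the theory of asymptotically autonomous systems or a careful LaSalle/chain-recurrence argument ruling out nontrivial SIS orbits inside the limit set. By contrast, the supporting algebra---the $B^*(p^{S\infty})$ simplification and the neighbor propagation giving $P^{S\infty}\hat B p^I=\mathbf{0}$---is routine and directly mirrors Proposition~\ref{prop:equilibria}.
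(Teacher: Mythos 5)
Your proposal is correct and follows essentially the same route as the paper: the paper exploits the same monotonicity via LaSalle with $V=\mathbf{1}^Tp^S$, obtains $P^{S}Bp^I=P^{S}\hat Bp^I=\mathbf{0}$ on the limit set, reduces the $p^I$ dynamics there to the SIS system with matrix $\hat B$, and cites the known SIS convergence results to conclude. If anything, your explicit flagging of the final step (upgrading an invariant, chain-transitive limit set of the reduced SIS flow to a single equilibrium) is more careful than the paper's treatment, which passes over that point rather quickly.
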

\begin{proof}
By invariance of $\Delta_N$, any solution $y = y(t,y_0)$ of~\eqref{eq:Het-SIRI} with initial condition $y_0 \in \Delta_N$ is bounded and stays in $\Delta_N$ for $t\geq 0$. Therefore its $\omega$-limit set $\Omega(y_0)$ is a nonempty, compact, invariant set, and $y$ approaches $\Omega(y_0)$ as $t \to \infty$ (see Lemma 4.1 in~\cite{khalil2002nonlinear}). Let $V = \mathbf{1}^T p^S$, then $\dot{V} = - (p^S)^T B p^I \leq 0$ in $\Delta_N$. By LaSalle's Invariance Principle~\cite{khalil2002nonlinear}, $y$ approaches the largest invariant set $W$ in the set $L =\{(p^S,p^I) \in \Delta_N \, | \, \dot{V}=0\}$. It follows that, on $L$, $P^{S} B p^{I} = \mathbf{0}$ which implies $\dot{p}^S = \mathbf{0}$. Moreover, since $\hat{B}$ has a zero at every entry where $B$ has a zero, it follows that $P^S \hat{B} p^I=\mathbf{0}$ on $L$.
This in turn implies that the $p^I$ dynamics on $L$ are given by the network SIS dynamics~\eqref{eq:SIS}.
Since solutions of~\eqref{eq:SIS} either converge to the IFE point $p^{I*} = \mathbf{0}$ or to the EE~\eqref{eq:EE_j} as $t \to \infty$~\cite{fall2007epidemiological}, it follows that every invariant set of $L$ consists only of equilibria of~\eqref{eq:Het-SIRI} (see Remark~\ref{remark:SIRI_SIS_equivalence}). By Proposition~\ref{prop:equilibria}, $W$ is the union of the IFE set $\mathcal{M}$ and the EE~\eqref{eq:EE_j}. Furthermore, since all $\omega$-limit points are equilibria, $\Omega(y_0)$ contains a single point corresponding to either a point in the IFE set $\mathcal{M}$ or the EE. 
{\color{black}If $R_1 \leq 1$ it follows from Propositions~\ref{prop:equilibria} and~\ref{prop:EEstrong} that the IFE set $\mathcal{M}$ comprises the only equilibria of~\eqref{eq:Het-SIRI}. Therefore, $y$ converges to a point in $\mathcal{M}$ as $t \to \infty$. If $R_1 > 1$, $y$ converges to a point in $\mathcal{M}$ or the EE as $t \to \infty$.}
\end{proof}

\subsection{Behavioral Regimes}
{\color{black}We now state the third theorem of the paper, which shows how the four reproduction numbers distinguish the four behavioral regimes: infection-free, endemic, epidemic, and bistable}. We interpret and illustrate in Figure~\ref{fig:NSIRI_Quad}. The proof follows.

\begin{theorem}[Behavioral Regimes]\label{thm:NSIRImain} Let $p^I(0) \succ \mathbf{0}$, and $w^T_m$ be the leading left-eigenvector {\color{black} associated with $R_{max}$}. Then the network SIRI model~\eqref{eq:Het-SIRI} exhibits four qualitatively distinct behavioral regimes:
\begin{enumerate}
\item Infection-Free Regime: If $R_{max} \leq 1$  the following hold:
\begin{enumerate}
    \item All solutions converge to a point in $\mathcal{M}$ as $t \to \infty$.
    \item If $B \succeq \hat{B}$ or $\hat{B} \succeq B$,  the weighted  infected average $p^I_{\rm avg}(t) = w^T_m D^{-1} p^I(t)$ decays monotonically to zero.
\end{enumerate}\label{itm:IFE_Reg}

\item Endemic Regime: If $R_{min} > 1$,  all solutions converge to the EE as $t \to \infty$.\label{itm:EE_Reg}

\item Epidemic Regime: If {\color{black}$R_{max} > 1 > R_{min}$}, and $R_1 \leq 1$,  the following hold:
\begin{enumerate}
    \item All solutions converge to a point in $\mathcal{M}$ as $t \to \infty$.
    \item There exists $H \subset \Delta_N$ and $H \supset \mathcal{M}_+$ that is foliated by families of heteroclinic orbits, each orbit connecting two points in $\mathcal{M}$. 
\end{enumerate}\label{itm:Epi_Reg}

\item Bistable Regime: If {\color{black}$R_{max} > 1 > R_{min}$} and $R_1 > 1$, then, depending on the initial conditions, solutions converge to a point in $\mathcal{M}$ or  to the EE as $t \to \infty$.
 \label{itm:Bis_Reg} 
\end{enumerate}
\end{theorem}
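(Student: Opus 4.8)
The plan is to treat the four regimes separately, using Lemma~\ref{lemma:IFEorEE_as_t2infty} as the backbone: it guarantees that every solution has a single-point $\omega$-limit that is either an IFE point in $\mathcal{M}$ or the EE, so in each regime it suffices to decide which of the two occurs. I would repeatedly invoke the ordering $R_{min}\le R_1\le R_{max}$ (a consequence of Proposition~\ref{prop:RmaxRmin}, since $R_1=R(\mathbf 0)=\rho(\hat B D^{-1})$ is one of the values over which $R_{min},R_{max}$ are the extremes), Proposition~\ref{prop:EEstrong} (the EE exists and is locally stable iff $R_1>1$), Theorem~\ref{thm:stability} (which of $\mathcal{M}_-,\mathcal{M}_+,\mathcal{M}_0$ are nonempty), and Lemma~\ref{lemma:M} (local stability/instability of individual IFE points).

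\emph{Infection-free regime.} For (a), $R_{max}\le 1$ forces $R_1\le R_{max}\le 1$, so no EE exists (Proposition~\ref{prop:EEstrong}) and Lemma~\ref{lemma:IFEorEE_as_t2infty} gives convergence to $\mathcal{M}$. For (b) I would set up a differential inequality for $p^I_{\rm avg}=w_m^T D^{-1}p^I$. Using Lemma~\ref{lemma:LambdaMaxMin}, when $B\succeq\hat B$ the corner maximizer is $p^{max}=\mathbf 1$ and when $\hat B\succeq B$ it is $p^{max}=\mathbf 0$, and in both cases monotonicity of the entries of $B^*(\cdot)$ yields $B^*(p^S)\preceq B^*(p^{max})$ for all $p^S$; combined with the left-eigenvector identity $w_m^T D^{-1}B^*(p^{max})=R_{max}\,w_m^T$ this gives
\[
\dot p^I_{\rm avg}\le (R_{max}-1)\,w_m^T p^I-w_m^T D^{-1}P^I\hat B p^I\le 0,
\]
where the last bound uses $R_{max}\le 1$ and nonnegativity of the quadratic term. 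Since $p^I(t)\gg\mathbf 0$ for $t>0$ by positivity of the flow on the strongly connected graph, the quadratic term is strictly positive, so $p^I_{\rm avg}$ is strictly decreasing until $p^I=\mathbf 0$; together with (a) it decays monotonically to zero.

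\emph{Endemic regime.} Here $R_{min}>1$ gives $R_1\ge R_{min}>1$, so the EE exists (Proposition~\ref{prop:EEstrong}), and Theorem~\ref{thm:stability}(B) gives $\mathcal{M}_+=\mathcal{M}$, i.e.\ every IFE point is unstable. I would rule out convergence to $\mathcal{M}$ by contradiction: if $y(t)\to x\in\mathcal{M}$, then $y$ eventually enters and remains in the repelling neighborhood $W$ of $x$ furnished by Lemma~\ref{lemma:M}, contradicting that the solution through any point of $W$ must leave $W$. Hence the single-point limit from Lemma~\ref{lemma:IFEorEE_as_t2infty} is the EE.

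\emph{Epidemic and bistable regimes, and the main obstacle.} In both, $R_{max}>1>R_{min}$, so Theorem~\ref{thm:stability}(F) makes $\mathcal{M}_-,\mathcal{M}_+,\mathcal{M}_0$ all nonempty. In the epidemic regime $R_1\le 1$ means no EE exists, so (a) is immediate from Lemma~\ref{lemma:IFEorEE_as_t2infty}. For (b) I would use the invariant foliation produced by the Shoshitaishvili reduction in the proof of Lemma~\ref{lemma:M}: at each $x\in\mathcal{M}_+$ the only transverse eigenvector lying in $\Delta_N$ is the positive leading one, giving a one-dimensional local unstable manifold emanating into $\textrm{int}(\Delta_N)$ with $x$ as $\alpha$-limit; following this orbit forward, part (a) forces convergence to some point of $\mathcal{M}$, so the orbit is heteroclinic between two IFE points, and defining $H$ as the union of these complete orbits over $x\in\mathcal{M}_+$ (together with the foliated neighborhood) yields the claimed set with $H\supset\mathcal{M}_+$. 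In the bistable regime $R_1>1$, so the EE exists and is locally stable (Proposition~\ref{prop:EEstrong}) while $\mathcal{M}_-\neq\emptyset$ consists of locally stable IFE points (Lemma~\ref{lemma:M}); thus there are initial conditions with $p^I(0)\succ\mathbf 0$ in the basin of each, and Lemma~\ref{lemma:IFEorEE_as_t2infty} excludes any other limit, establishing bistability. The main obstacle is the heteroclinic-foliation claim (b): one must verify that the local unstable manifolds attached to $\mathcal{M}_+$ by the reduction principle glue into globally defined orbits that remain in $\Delta_N$ and terminate on $\mathcal{M}$, i.e.\ that forward convergence to $\mathcal{M}$ is compatible with the invariant foliation, which is the only place genuine dynamical-systems machinery beyond the $\omega$-limit and stability lemmas is required.
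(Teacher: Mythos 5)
Your proposal is correct and follows essentially the same route as the paper: Lemma~\ref{lemma:IFEorEE_as_t2infty} pins every $\omega$-limit to a single point that is either in $\mathcal{M}$ or the EE, Proposition~\ref{prop:EEstrong} and Theorem~\ref{thm:stability} decide which is available in each regime, the same differential inequality $\dot p^I_{\rm avg}\le(R_{max}-1)\,w_m^Tp^I-w_m^TD^{-1}P^I\hat Bp^I$ handles 1(b), and the heteroclinic set $H$ is built exactly as in the paper as the union of unstable manifolds of points of $\mathcal{M}_+$, whose forward orbits stay in $\Delta_N$ by invariance and terminate on $\mathcal{M}$ by part (a). The only divergence is the boundary case $R_{max}=1$ in 1(b), where the paper runs a LaSalle argument on the set where the quadratic term vanishes rather than invoking strict positivity of $p^I(t)$ for $t>0$; both arguments work.
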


Table~\ref{tab:NSIRI_BehavRegimes} summarizes which regimes of Theorem~\ref{thm:NSIRImain}  exist for each of the six cases of the network SIRI model. Figure~\ref{fig:NSIRI_Quad} illustrates the four regimes of Theorem~\ref{thm:NSIRImain} near the IFE set $\mathcal{M}$ when $B \succeq \hat B$ or $\hat{B} \succeq B$. 
 
 \begin{table}[!t]
\caption{Behavioral Regimes of the Network SIRI Cases}
\label{tab:NSIRI_BehavRegimes}
\begin{center}
\begin{tabular}{c|c|c|c|c|c}
\textbf{Case} & \textbf{Model} &\textbf{Inf.-Free} & \textbf{Endemic} & \textbf{Epidemic} & \textbf{Bistable}\\
\hline
1 & SI & \ & \checkmark &  & \\
2 & SIR & \checkmark & & \checkmark & \\
3 & SIS & \checkmark & \checkmark & & \\
4 & Partial & \checkmark & \checkmark & \checkmark  & \\
5 & Comprom. & \checkmark & \checkmark &  & \checkmark\\
6 & Mixed & \checkmark & \checkmark & \checkmark & \checkmark
\end{tabular}
\end{center}
\end{table}
 
 In Case 2 (SIR), since $\hat{B} = \bar{\mathbf{0}}$, $R_{max} = R_0$ and $R_{min} = R_1 = 0$. So only the infection-free and epidemic regimes are possible. This corresponds to the line $R_1 = 0$ in  Figure~\ref{fig:NSIRI_Quad}.  
 
 In Case 3 (SIS), since $B = \hat{B}$, $R_{max} = R_0 = R_1 = R_{min}$. So only the infection-free and endemic regimes are possible. This corresponds to the line $R_1 = R_0$ in  Figure~\ref{fig:NSIRI_Quad}. 
 
 In Case 4 (partial immunity), since $B \succ \hat{B}$, by Proposition~\ref{prop:RmaxRmin},
 $R_{max}= R_0$ and $R_{min}=R_1$. So only the infection-free, endemic, and epidemic regimes are possible.  This corresponds to the region  $R_1 < R_0$ in Figure~\ref{fig:NSIRI_Quad}. 

In Case 5 (compromised immunity), since $\hat{B} \succ B$, by Proposition~\ref{prop:RmaxRmin}, $R_{max}= R_1$ and $R_{min}=R_0$.  So only the infection-free, endemic, and bistable regimes are possible. This corresponds to the region $R_1 > R_0$  in Figure~\ref{fig:NSIRI_Quad}. 

In Case 6 (mixed immunity), all four regimes are possible.

The $N$-dimensional set $\mathcal{M}$ is illustrated in Figure~\ref{fig:NSIRI_Quad} as a plane ($N=2$) for ease of visualization. The blue region represents $\mathcal{M}_-$ (the set of stable points in $\mathcal{M}$) and the red region represents $\mathcal{M}_+$ (the set of unstable points in $\mathcal{M}$). Black arrows illustrate the flow of solutions near $\mathcal{M}$. Theorems~\ref{thm:stability} and~\ref{thm:uniqueMsubsets} prove which regions of $\mathcal{M}$ exist in each of the regimes. In the infection-free regime, $\mathcal{M} = \mathcal{M}_-$ and all solutions converge to the IFE set $\mathcal{M}$. In the endemic regime, $\mathcal{M} = \mathcal{M}_+$ and all solutions converge to the EE. 
In Cases 2 and 4 in the epidemic regime and in Case 5 in the bistable regime, $\mathcal{M}_-$ and $\mathcal{M}_+$ both exist and there is a unique hypersurface $\mathcal{M}_0$ shown as a black dashed line separating the two. 

In Section~\ref{sec:BisAndEpi} we study the geometry of solutions near $\mathcal{M}$ and the stable manifold (green) and unstable manifold (magenta) of $\mathcal{M}_0$ in the epidemic regime of Cases 2 and 4 and the bistable regime of Case 5.  These manifolds are included in Figure~\ref{fig:NSIRI_Quad} and help illustrate how solutions can flow.
\begin{figure}[!t]
\centering
\includegraphics[width=3.25in]{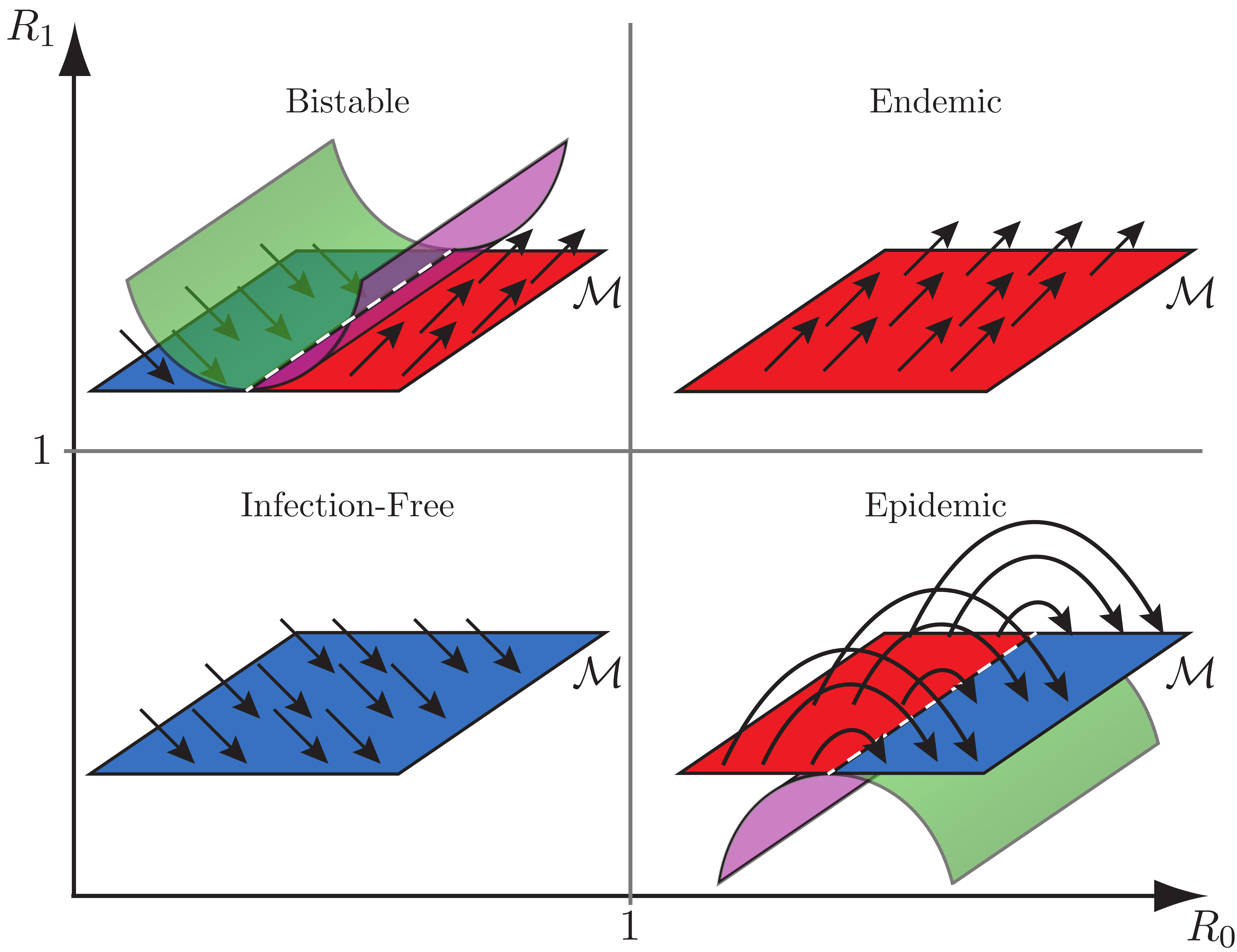}
\caption{Illustration of local dynamics near $\mathcal{M}$ for the four different behavioral regimes of the network SIRI model \eqref{eq:Het-SIRI} when $B \succeq \hat B$ or $\hat{B} \succeq B$. The  diagrams are arranged where they exist in the $R_0$, $R_1$ parameter space according to Theorem~\ref{thm:NSIRImain}. $\mathcal{M}_-$ is blue,  $\mathcal{M}_+$ is red, and $\mathcal{M}_0$ is the black dashed line. The stable and unstable manifolds of $\mathcal{M}_0$ are green and magenta, respectively.}
\label{fig:NSIRI_Quad}
\end{figure}

\begin{proof}[Proof of Theorem~\ref{thm:NSIRImain}]
To prove~\ref{itm:IFE_Reg}, let $R_{max} \leq 1$. Then by definition $R_1 \leq 1$. By Lemma~\ref{lemma:IFEorEE_as_t2infty}, all solutions converge to a point in $\mathcal{M}$ as $t \to \infty$.
By (\ref{eq:Het-SIRI}), the dynamics of  $p^I_{\rm avg}$ are
\begin{align}\label{eq:wAvgDyn}
 \dot{p}^I_{\rm avg} &= w_m^T D^{-1} (B^*(p^S)-D)p^I - w_m^T D^{-1}P^I\hat{B}p^I  \nonumber \\
&\leq w_m^T D^{-1} (\bar{B}_{max}-D)p^I - w_m^T D^{-1}P^I\hat{B}p^I \nonumber \\
&= (\bar{R}_{max}-1) w_m^T p^I - w_m^T D^{-1}P^I\hat{B}p^I.
\end{align}
The inequality follows from (\ref{eq:Bmaxmin}), and the last equality follows from $\rho(D^{-1} \bar{B}_{max}) = \rho(\bar{B}_{max} D^{-1})$. Since $\hat{B}$ is irreducible and by Proposition~\ref{prop:Metzler} $w_m \gg \mathbf{0}$, the nonlinear term $w_m^T D^{-1}P^I\hat{B}p^I$ is nonnegative. And since $B \succeq \hat{B}$ or $\hat{B} \succeq B$, by Proposition~\ref{prop:RmaxRmin}, $\bar{R}_{max} = R_{max}$. Therefore, if $R_{max}<1$, then $\dot{p}^I_{\rm avg} < 0$ and $ p^I_{\rm avg}$ decays monotonically to zero as $t \to \infty$. 

If $R_{max} =1$, $ \dot{p}^I_{\rm avg} \leq 0$ with equality holding only at points in  $\Sigma = \{\mathbf{0} \preceq p^I \preceq \mathbf{1} \,|\, p^I_j > 0 {\rm~implies~} p_k^I=0, k \in \mathcal{N}_j \}$.

At any point in $\Sigma$, the dynamics of node $j$ where $p^I_j > 0$ reduce to $\dot{p}^I_j = -\delta_j p_j^I$. Thus, no solution can stay in $\Sigma$ except for the trivial solution $p^I = \mathbf{0}$. By LaSalle's Invariance Principle,  $p^I_{\rm avg}$ decays monotonically to zero as $t \to \infty$.

To prove~\ref{itm:EE_Reg}, let $R_{min} > 1$. Then by definition $R_1 > 1$. By Theorem~\ref{thm:stability}, all points in $\mathcal{M}$ are unstable. Therefore, no non-trivial solution can converge to a point in $\mathcal{M}$. By Lemma~\ref{lemma:IFEorEE_as_t2infty}, it follows that the $\omega$-limit set of all solutions with $p^I(0) \succ \mathbf{0}$ is the EE. Therefore, all solutions converge to the EE as $t \to \infty$. 

To prove~\ref{itm:Epi_Reg}, let {\color{black}$R_{max} > 1 > R_{min}$}, and $R_1 \leq 1$. By Theorem~\ref{thm:stability}, $\mathcal{M}_-, \mathcal{M}_+, \mathcal{M}_0 \neq \emptyset$.   By Lemma~\ref{lemma:IFEorEE_as_t2infty}, all solutions converge to a point in $\mathcal{M}$ as $t \to \infty$. By the proof of Lemma~\ref{lemma:M}, the unstable manifold of any unstable point $x \in \mathcal{M}_+$, lies partially or entirely in $\Delta_N$. 
Let $y = y (t,y_0)$ be a solution of~\eqref{eq:Het-SIRI} with initial condition $y_0$ on the unstable manifold of $x$. Then, $y$ converges to $x$ as $t \to -\infty$. By Lemma~\ref{lemma:IFEorEE_as_t2infty}, $y$ converges to a  point $x' \in \mathcal{M}$, $x' \neq x$, as $t \to \infty$. Thus, $y$ forms a heteroclinic orbit. Let $H \subset \Delta_N$ be the union of the unstable manifolds of all points $x \in \mathcal{M}_+$. Then, every solution in $H$ forms a heteroclinic orbit connecting two points in $\mathcal{M}$.

To prove~\ref{itm:Bis_Reg}, let $R_1 > 1$. By Proposition~\ref{prop:EEstrong}, the EE exists and it is locally stable. Therefore any solution in the region of attraction of the EE converges to the EE as $t \to \infty$. By Lemma~\ref{lemma:M}, for any locally stable point $x \in \mathcal{M}_-$, there exists $V \subset \Delta_N$ and $x \in V$ such that any solution starting in $V$ converges to a point in $V \cap \mathcal{M}_-$ at an exponential rate.
\end{proof}

\section{Bistable and Epidemic Regimes}\label{sec:BisAndEpi}

\subsection{Geometry of Solutions near $\mathcal{M}$}
In this section we examine the geometry of solutions near the IFE set $\mathcal{M}$ in the epidemic regime for Case 2 (SIR) and Case 4 (partial immunity), and the bistable regime for Case 5 (compromised immunity). 
The bistable regime for the network SIRI model, which doesn't exist for the well-studied SIS and SIR models, generalizes that proved for the well-mixed SIRI model studied in~\cite{pagliaraSIRIWM}.  
\begin{definition}[Transversal crossing of $\Lambda_c$]
Let $y(t) = (p^S(t),p^I(t)) \in \Delta_N$, $t \geq 0$, be a solution of~\eqref{eq:Het-SIRI}. We say that $y$ crosses $\Lambda_c$ {\em transversally} if $p^S$, the projection of $y$ onto $\mathcal{M}$, crosses $\Lambda_c$ transversally. This holds if there exists a time $t'>0$ and $m \in \Lambda_c$ such that $p^S(t') = m$ and $\dot{p}^S(t')^T \nabla\Lambda(m) \neq 0$.
\end{definition}

\begin{prop}[Transversal crossing direction]\label{prop:Trans1}
Let $y(t) = (p^S(t),p^I(t)) \in \Delta_N$, $t \geq 0$, be a solution of~\eqref{eq:Het-SIRI} that crosses 
$\Lambda_c$ transversally at the point $(m,\mathbf{0}) \in \mathcal{M}$ and time $t=t'$, where $c = \Lambda(m)$. If $\dot{p}^S(t')^T \nabla\Lambda(m) < 0$, then $\Lambda$ decreases as $p^S$ crosses $\Lambda_c$, and if $\dot{p}^S(t')^T \nabla\Lambda(m) > 0$, then $\Lambda$ increases as $p^S$ crosses $\Lambda_c$. Suppose $(m,\mathbf{0}) \in \mathcal{M}_0$, then if $\dot{p}^S(t')^T \nabla\Lambda(m) < 0$, $p^S$ crosses $\mathcal{M}_{0}$ from $\mathcal{M}_{+}$ to $\mathcal{M}_{-}$ and if $\dot{p}^S(t')^T \nabla\Lambda(m) > 0$, then $p^S$ crosses $\mathcal{M}_{0}$  from $\mathcal{M}_{-}$ to $\mathcal{M}_{+}$.
\end{prop}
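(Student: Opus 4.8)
The plan is to track the value of $\Lambda$ along the projected trajectory and reduce every claim to the sign of a single scalar derivative. Define the scalar function $g(t) = \Lambda(p^S(t))$, where $\Lambda$ is the smooth leading-eigenvalue map of Lemma~\ref{lemma:LambdaDyn} evaluated on the projection $(p^S(t),\mathbf{0}) \in \mathcal{M}'$. First I would note that near $t=t'$ the projected point $(p^S(t),\mathbf{0})$ lies in the neighborhood $E \subset \mathcal{M}'$ on which $\Lambda$ is defined and $C^\infty$: this holds because $p^S(t')=m$ with $(m,\mathbf{0}) \in \mathcal{M} \subset E$, $E$ is open, and $p^S(t)$ varies continuously, so $g$ is well-defined and differentiable on an open interval around $t'$.

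Next I would differentiate $g$ by the chain rule, which is precisely the directional-derivative identity of Proposition~\ref{prop:gradSys}(4): $\dot{g}(t) = \nabla\Lambda(p^S(t))^T \dot{p}^S(t)$. Evaluating at $t=t'$ and using $p^S(t')=m$ gives $\dot{g}(t') = \dot{p}^S(t')^T \nabla\Lambda(m)$, where $\dot{p}^S(t')$ is supplied by the dynamics $\dot{p}^S = -P^S B p^I$ in~\eqref{eq:Het-SIRI}. The transversality hypothesis guarantees $\dot{g}(t') \neq 0$, so $g$ is strictly monotone across $t'$. The first two claims are then immediate: if $\dot{p}^S(t')^T \nabla\Lambda(m) < 0$ then $g$ is strictly decreasing at $t'$, i.e., $\Lambda$ decreases as $p^S$ crosses $\Lambda_c$; if the sign is positive, $\Lambda$ increases.

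For the third claim I would invoke Lemma~\ref{lemma:MSubsetsAndLambda} to translate the sign of $\Lambda$ into membership in the IFE subsets: $\mathcal{M}_-$, $\mathcal{M}_0$, $\mathcal{M}_+$ correspond to $\Lambda < 0$, $\Lambda = 0$, $\Lambda > 0$, respectively. Since $(m,\mathbf{0}) \in \mathcal{M}_0$ we have $c = \Lambda(m) = 0$, so $g(t')=0$. If $\dot{g}(t') < 0$, strict monotonicity forces $g(t) > 0$ for $t$ slightly below $t'$ and $g(t) < 0$ for $t$ slightly above, so the projection passes from $\mathcal{M}_+$ into $\mathcal{M}_-$; the positive sign yields the reverse crossing by the same argument. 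I expect the only point needing care, rather than a genuine obstacle, to be the domain and tangency bookkeeping of the first paragraph: confirming that $\dot{p}^S(t')$ is a tangent vector to $\mathcal{M}'$ so that the gradient-system directional-derivative formula of Proposition~\ref{prop:gradSys}(4) applies verbatim, and that $g$ is defined on a full neighborhood of $t'$ even though the crossing point itself lies in $\mathcal{M}$.
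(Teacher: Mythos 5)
Your argument is correct and is essentially the paper's own proof: both compute the time derivative of $\Lambda$ along the projected trajectory as $\dot{p}^S(t')^T \nabla\Lambda(m)$ via the chain rule (Proposition~\ref{prop:gradSys}), read off the monotonicity of $\Lambda$ from its sign, and invoke Lemma~\ref{lemma:MSubsetsAndLambda} to convert the sign change of $\Lambda$ through zero into the stated crossing direction between $\mathcal{M}_+$ and $\mathcal{M}_-$. Your extra bookkeeping about $E$ being an open neighborhood and $g$ being defined near $t'$ is a harmless elaboration of what the paper leaves implicit.
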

\begin{proof}
Since $\dot{p}^S = -P^SBp^I$, then $\dot{p}^S(t')^T \nabla\Lambda(m)$ is the derivative of $\Lambda$ at $m$ along solutions of~\eqref{eq:Het-SIRI} (see Proposition~\ref{prop:gradSys}). If $\dot{p}^S(t')^T \nabla\Lambda(m) < 0$, $\Lambda$ decreases as $p^S(t)$ crosses $\Lambda_c$ and if $\dot{p}^S(t')^T \nabla\Lambda(m) > 0$, $\Lambda$ increases as $p^S(t)$ crosses $\Lambda_c$. Suppose $(m,\mathbf{0}) \in \mathcal{M}_0$.
By Lemma~\ref{lemma:MSubsetsAndLambda}, if $\dot{p}^S(t')^T \nabla\Lambda(m) < 0$, $p^S$ crosses $\mathcal{M}_{0}$ from $\mathcal{M}_{+}$ to $\mathcal{M}_{-}$. Similarly, if $\dot{p}^S(t')^T \nabla\Lambda(m) > 0$, $p^S$ crosses $\mathcal{M}_{0}$ from $\mathcal{M}_{-}$ to $\mathcal{M}_{+}$.
\end{proof}

\begin{theorem}[Transversality of solutions]\label{thm:TransversalityCases4and5}
Consider Cases 2 and 4 in the epidemic regime ($R_0>1$, $R_1<1$) and Case 5 in the bistable regime ($R_0<1$, $R_1>1$). Assume no stubborn agents. Let  $y(t) = (p^S(t),p^I(t)) \in \Delta_N$, $t \geq 0$, be a solution of~\eqref{eq:Het-SIRI}   for which there exists a time $t'>0$ and $(m,\mathbf{0}) \in \textrm{int}(\mathcal{M})$, such that $p^S(t') = m$ and $p^I(t') \succ \mathbf{0}$. Let $c = \Lambda(m)$ so that $m \in \Lambda_c$. Then $y$ crosses $\Lambda_c$ transversally. Suppose $(m,\mathbf{0}) \in \textrm{int}(\mathcal{M}_0)$. In the epidemic regime of Cases 2 and 4, $p^S$ crosses $\mathcal{M}_{0}$ from $\mathcal{M}_{+}$ to $\mathcal{M}_{-}$, and the stable and unstable manifolds of $\mathcal{M}_0$ lie outside $\Delta_N$.
In the bistable regime of Case 5, $p^S$ crosses $\mathcal{M}_{0}$ from $\mathcal{M}_{-}$ to $\mathcal{M}_{+}$, and
the stable and unstable manifolds of $\mathcal{M}_0$ lie inside $\Delta_N$.
\end{theorem}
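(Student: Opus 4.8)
The plan is to handle the two assertions of the theorem separately: first establish the transversal crossing together with its direction by a single explicit sign computation, and then locate the stable and unstable manifolds of $\mathcal{M}_0$ by reducing to the center manifold and reusing exactly the same sign.

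For the transversality, by the definition of transversal crossing I must show that $\dot{p}^S(t')^T\nabla\Lambda(m)\neq 0$. Since $\dot{p}^S = -P^S B p^I$ and, by Lemma~\ref{lemma:LambdaDyn} (equation~\eqref{eq:gradientLambda}), $\nabla\Lambda(m) = \textnormal{diag}(w)(B-\hat{B})v$ with $w,v\gg\mathbf{0}$ (Proposition~\ref{prop:Metzler}, using that $J_T(m)$ is Metzler irreducible on $\textnormal{int}(\mathcal{M})$), I would expand the pairing as
\[
\dot{p}^S(t')^T\nabla\Lambda(m) = -\sum_{j=1}^N m_j\,(Bp^I(t'))_j\, w_j\, g_j, \qquad g_j := \sum_{l=1}^N(\beta_{jl}-\hat\beta_{jl})v_l .
\]
Because $m\in\textnormal{int}(\mathcal{M})$ gives $m_j>0$ and because there are no stubborn agents, the factor $g_j$ is strictly positive for every $j$ in Cases 2 and 4 ($B\succ\hat{B}$ or $\hat{B}=\bar{\mathbf{0}}$) and strictly negative for every $j$ in Case 5 ($\hat{B}\succ B$), by $v\gg\mathbf{0}$. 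Since $p^I(t')\succ\mathbf{0}$ and $B$ is irreducible, $(Bp^I(t'))_j\ge 0$ with at least one index strictly positive. Every summand therefore shares one sign and at least one is nonzero, so the whole pairing is strictly negative in Cases 2 and 4 and strictly positive in Case 5. This gives the transversal crossing, and Proposition~\ref{prop:Trans1} then yields the stated directions: $\mathcal{M}_+\to\mathcal{M}_-$ in the epidemic regime of Cases 2 and 4, and $\mathcal{M}_-\to\mathcal{M}_+$ in the bistable regime of Case 5.

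For the location of the invariant manifolds I would pass to the center manifold $W^c$ at a point $x_0=(m_0,\mathbf{0})\in\textnormal{int}(\mathcal{M}_0)$ furnished by the Shoshitaishvili reduction already used in Lemma~\ref{lemma:M}. At $x_0$ the leading transverse eigenvalue vanishes and its eigenvector $v_0\gg\mathbf{0}$ lies in $\Delta_N$, while the remaining $N-1$ transverse directions are strongly stable and lie outside $\Delta_N$. Writing $\xi$ for the coordinate of $p^I$ along $v_0$ (so $\xi>0$ is the interior of $\Delta_N$) and taking $\Lambda(p^S)$ as the second slow variable, the reduced dynamics on $W^c$ near $\mathcal{M}_0$ are, to leading order,
\[
\dot\xi = \Lambda\,\xi, \qquad \dot\Lambda = \nabla\Lambda^T\dot{p}^S = -C\,\xi,
\]
where $C = \sum_j (m_0)_j (Bv_0)_j\, w_j g_j$ carries exactly the sign found above ($C>0$ in Cases 2 and 4, $C<0$ in Case 5). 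The quantity $H=\Lambda^2+2C\xi$ is conserved along this reduced flow, so the (slow) stable and unstable manifolds of $\mathcal{M}_0$ are the two branches of the level set $H=0$, namely $\xi = -\Lambda^2/(2C)$. Hence $\xi\le 0$, i.e. outside $\Delta_N$, precisely when $C>0$ (Cases 2 and 4), and $\xi\ge 0$, i.e. inside $\Delta_N$, precisely when $C<0$ (Case 5), which is the claim.

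The main obstacle is the rigor of this second part. I must make precise which manifolds of the non-hyperbolic set $\mathcal{M}_0$ are meant, namely the slow center-manifold stable and unstable manifolds tangent to $v_0$, as opposed to the strong stable directions that are always outside $\Delta_N$, and I must control the higher-order terms discarded in the reduced system so that $H=0$ forces $\xi$ to have the sign of $-C$ throughout a genuine neighborhood rather than only to leading order. Verifying that $H$ (or a suitable correction) is an exact first integral, or at least has a definite-sign derivative off $\{H=0\}$, and confirming that the invariantly foliated neighborhood from the Shoshitaishvili reduction meets $\Delta_N$ on the asserted side, is where the delicate work lies; by contrast, the transversality computation is routine once the eigenvector positivity from Proposition~\ref{prop:Metzler} and the absence of stubborn agents are in hand.
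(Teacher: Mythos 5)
Your first assertion---transversality and the crossing direction---is proved exactly as in the paper: the paper notes $\dot{p}^S(t') = -\mathrm{diag}(m)Bp^I(t') \prec \mathbf{0}$ (using $m \gg \mathbf{0}$ from $(m,\mathbf{0})\in\textrm{int}(\mathcal{M})$ and irreducibility of $B$), pairs this with $\nabla\Lambda \gg \mathbf{0}$ (Cases 2, 4) or $\nabla\Lambda \ll \mathbf{0}$ (Case 5) from Lemma~\ref{lemma:LambdaDyn}, and invokes Proposition~\ref{prop:Trans1}; your componentwise expansion with $g_j$ is the same computation written out, and it is correct.

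For the location of the stable and unstable manifolds of $\mathcal{M}_0$ you take a genuinely different and much heavier route. The paper disposes of this in one line: since every solution in $\Delta_N$ with $p^I \succ \mathbf{0}$ whose projection reaches $\textrm{int}(\mathcal{M}_0)$ must cross transversally in the direction just established, continuity of solutions with respect to initial conditions forces the (slow) stable and unstable manifolds of $\mathcal{M}_0$---which consist of solutions that converge to or emanate from $\mathcal{M}_0$ rather than crossing it---to lie outside $\Delta_N$ in the epidemic regime and inside $\Delta_N$ in the bistable regime. Your alternative is a two-dimensional Shoshitaishvili/center-manifold reduction in the coordinates $(\xi,\Lambda)$ with reduced flow $\dot\xi = \Lambda\xi$, $\dot\Lambda = -C\xi$ and approximate first integral $H = \Lambda^2 + 2C\xi$, from which the manifolds are the branches of $H=0$ with $\operatorname{sgn}\xi = -\operatorname{sgn} C$. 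This gives a more quantitative local picture (closer in spirit to the well-mixed analysis of~\cite{pagliaraSIRIWM}) and correctly reproduces the sign dichotomy, but, as you yourself flag, it is not rigorous as written: you discard the higher-order terms in both reduced equations, you do not verify that $H$ (or a corrected version) is actually invariant, and you do not show that the genuine center-manifold branches stay on the asserted side of $\{\xi=0\}$ beyond leading order. None of that work is needed---the sign of $\dot{p}^S(t')^T\nabla\Lambda(m)$ that you already computed, applied uniformly to all solutions passing over $\textrm{int}(\mathcal{M}_0)$, is the whole argument. I would recommend replacing your second part with that continuity argument and keeping the reduction only as supporting intuition.
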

\begin{proof}
Assume no stubborn agents. Let $t'>0$ such that
$p^S(t') = m$, $p^I(t') \succ \mathbf{0}$ and $(m,\mathbf{0}) \in \textrm{int}(\mathcal{M}$). By Definition~\ref{def:MboundaryAndInt},  $m \gg \mathbf{0}$.  Let $c = \Lambda(m)$. So $\dot{p}^S(t') = -\textrm{diag}(m)Bp^I(t') \prec \mathbf{0}$.
By Lemma~\ref{lemma:LambdaDyn}, if $B \succ \hat{B}$ (Cases 2 and 4) then $\nabla\Lambda \gg \mathbf{0}$ and if $\hat{B} \succ B$ (Case 5) then $\nabla\Lambda \ll \mathbf{0}$. Thus $\dot{p}^S(t')^T \nabla\Lambda(m) \neq 0$, and so, by Proposition~\ref{prop:Trans1}, $y$ crosses $\Lambda_c$ transversally.  Suppose $(m,\mathbf{0}) \in \textrm{int}(\mathcal{M}_0$). Then, by Proposition~\ref{prop:Trans1}, $p^S$ crosses from $\mathcal{M}_{+}$ to $\mathcal{M}_{-}$ in Cases 2 and 4, and from $\mathcal{M}_{-}$ to $\mathcal{M}_{+}$ in Case 5. By continuity of solutions with respect to initial conditions, it follows that the stable and unstable manifolds of $\mathcal{M}_0$ must lie outside $\Delta_N$ in the epidemic regime of Cases 2 and 4 and inside $\Delta_N$ in the bistable regime of Case 5, as illustrated in Figure~\ref{fig:NSIRI_Quad}.
\end{proof}

\begin{corollary}\label{corollary:Cases2and4}
In the epidemic regime of Cases 2 and 4, every heteroclinic orbit in $\Delta_N$ connects a point in $\mathcal{M}_+$ to a point in $\mathcal{M}_-$.
\end{corollary}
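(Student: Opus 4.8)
The plan is to take an arbitrary heteroclinic orbit, whose $\alpha$- and $\omega$-limit sets are by definition equilibria, and classify its two endpoints separately, showing that the backward limit lies in $\mathcal{M}_+$ and the forward limit lies in $\mathcal{M}_-$. First I would fix the setting. In the epidemic regime $R_1 < 1$, so by Proposition~\ref{prop:EEstrong} no EE exists, and by Lemma~\ref{lemma:IFEorEE_as_t2infty} every solution of~\eqref{eq:Het-SIRI} converges to a single point of $\mathcal{M}$ as $t \to \infty$. Thus a heteroclinic orbit $y(t) = (p^S(t), p^I(t)) \in \Delta_N$ is a nontrivial orbit (with $p^I \succ \mathbf{0}$ at finite times) connecting a backward-limit equilibrium $x = (p^S_\alpha, \mathbf{0}) \in \mathcal{M}$ to a distinct forward-limit equilibrium $x' = (p^S_\omega, \mathbf{0}) \in \mathcal{M}$. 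By Theorem~\ref{thm:stability}, each of $x, x'$ lies in exactly one of $\mathcal{M}_-, \mathcal{M}_0, \mathcal{M}_+$, so it suffices to exclude the wrong subset for each endpoint.

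For the forward limit $x'$ I would rule out $\mathcal{M}_+$ and $\mathcal{M}_0$. If $x' \in \mathcal{M}_+$, then by the proof of Lemma~\ref{lemma:M} the stable and center manifolds of $x'$ lie outside $\Delta_N$, so no orbit contained in $\Delta_N$ can converge to $x'$ as $t \to \infty$. If $x'$ is an interior point of $\mathcal{M}_0$, then Theorem~\ref{thm:TransversalityCases4and5} places the stable manifold of $\mathcal{M}_0$ outside $\Delta_N$ in the epidemic regime of Cases 2 and 4, again precluding forward convergence within $\Delta_N$. Hence $x' \in \mathcal{M}_-$.

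The backward limit $x$ is handled symmetrically. If $x \in \mathcal{M}_-$, then by Lemma~\ref{lemma:M} $x$ is locally asymptotically stable on $\Delta_N$: a forward-invariant neighborhood is foliated by transverse stable fibers while the center directions tangent to $\mathcal{M}$ consist entirely of equilibria, so $x$ admits no nontrivial orbit with $x$ as $\alpha$-limit. If $x$ is an interior point of $\mathcal{M}_0$, then by Theorem~\ref{thm:TransversalityCases4and5} the unstable manifold of $\mathcal{M}_0$ lies outside $\Delta_N$, so no orbit in $\Delta_N$ emanates from $x$. Hence $x \in \mathcal{M}_+$, and every heteroclinic orbit runs from $\mathcal{M}_+$ to $\mathcal{M}_-$.

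As corroboration of the flow direction I would record a monotonicity property. In Cases 2 and 4 one has $B \succ \hat{B}$, so (assuming no stubborn agents, as in Theorem~\ref{thm:TransversalityCases4and5}) $\nabla \Lambda \gg \mathbf{0}$ by Lemma~\ref{lemma:LambdaDyn}, while $\dot{p}^S = -P^S B p^I \preceq \mathbf{0}$ along any solution; hence $\tfrac{d}{dt}\Lambda(p^S(t)) = \dot{p}^S(t)^T \nabla \Lambda(p^S(t)) \le 0$, so $\Lambda$ decreases monotonically along $y$ and in particular $\Lambda(p^S_\alpha) > \Lambda(p^S_\omega)$, consistent with the transition $\mathcal{M}_+ \to \mathcal{M}_-$. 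The main obstacle I anticipate is the rigorous exclusion of the center subset $\mathcal{M}_0$, and especially of its boundary $\partial \mathcal{M}_0 \subset \partial \mathcal{M}$, as a limit of $y$, since Theorem~\ref{thm:TransversalityCases4and5} is stated for interior points of $\mathcal{M}_0$. I would close this gap by extending the conclusion to $\partial \mathcal{M}_0$ through a closure argument on the invariant foliation supplied by the Shoshitaishvili reduction, using that the stable and unstable manifolds of $\mathcal{M}_0$ lie outside $\Delta_N$ throughout a neighborhood of $\mathcal{M}_0$ in $\mathcal{M}'$.
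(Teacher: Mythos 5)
Your proposal is correct and follows essentially the same route as the paper, which likewise combines Theorem~\ref{thm:TransversalityCases4and5} (stable/unstable manifolds of $\mathcal{M}_0$ lying outside $\Delta_N$) with Theorem~\ref{thm:NSIRImain} and Lemma~\ref{lemma:M} to pin the forward limit in $\mathcal{M}_-$ and the backward limit in $\mathcal{M}_+$. Your treatment is simply more explicit — the paper's two-line proof leaves the endpoint-by-endpoint exclusion implicit and does not address the $\partial\mathcal{M}_0 \subset \partial\mathcal{M}$ subtlety that you correctly flag and propose to close by a closure argument on the invariant foliation.
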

\begin{proof}
Since by Theorem~\ref{thm:TransversalityCases4and5} the stable manifold of any point in $\mathcal{M}_0$
lies outside $\Delta_N$, it follows by Theorem~\ref{thm:NSIRImain} that the orbit connects a point in $\mathcal{M}_+$ to a point in $\mathcal{M}_-$.
\end{proof}

\begin{corollary}\label{corollary:Cases5}
Consider Case 5 in the bistable regime. Let $y(t) = (p^S(t),p^I(t))$ be a solution of~\eqref{eq:Het-SIRI}. Then it holds that
\begin{itemize}
    \item If $y$ crosses $\mathcal{M}_0$ transversally or $(p^S(0),\mathbf{0}) \in \mathcal{M}_+$,
    then $y$ converges to the EE as $t \to \infty$.  Moreover, the EE lies on the unstable manifold of $\mathcal{M}_0$.
    \item  
    The stable manifold of $\mathcal{M}_0$ intersects the boundary of $\Delta_N$ where $p^S = \mathbf{1} - p^I$.
\end{itemize}
\end{corollary}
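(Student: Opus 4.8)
The plan is to build everything on a single monotonicity observation. Along any solution the susceptible projection decreases, $\dot p^S = -P^S B\, p^I \preceq \mathbf{0}$, while in Case 5 we have $\nabla\Lambda \ll \mathbf{0}$ by Lemma~\ref{lemma:LambdaDyn}. Chaining these with Proposition~\ref{prop:gradSys} gives
\[
\frac{d}{dt}\Lambda(p^S(t)) = \nabla\Lambda(p^S(t))^T \dot p^S(t) = -\nabla\Lambda(p^S(t))^T P^S B\, p^I(t) \ge 0,
\]
with strict inequality whenever $P^S B\, p^I \neq \mathbf{0}$, i.e.\ whenever the infection is present in the interior. So $\Lambda$ is nondecreasing along solutions and strictly increasing while the infection persists. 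I will combine this with Lemma~\ref{lemma:IFEorEE_as_t2infty} (since $R_1>1$, every $\omega$-limit is a single equilibrium, either a point of $\mathcal{M}$ or the EE) and Lemma~\ref{lemma:M} (points of $\mathcal{M}_+$ are unstable and hence cannot be $\omega$-limit points).

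For the first bullet, I would treat both hypotheses uniformly. If $y$ crosses $\mathcal{M}_0$ transversally, then by Theorem~\ref{thm:TransversalityCases4and5} it crosses from $\mathcal{M}_-$ to $\mathcal{M}_+$ at some $t'$, so $\Lambda(p^S(t'))=0$ and strict monotonicity gives $\Lambda(p^S(t))>0$ for $t>t'$; if instead $(p^S(0),\mathbf{0})\in\mathcal{M}_+$, then $\Lambda(p^S(0))>0$ and monotonicity keeps $\Lambda(p^S(t))>0$ for all $t\ge 0$. In either case, if $y$ converged to some $(p^{S*},\mathbf{0})\in\mathcal{M}$, continuity of $\Lambda$ would force $\Lambda(p^{S*})>0$, placing the limit in the unstable set $\mathcal{M}_+$, which Lemma~\ref{lemma:M} forbids; by Lemma~\ref{lemma:IFEorEE_as_t2infty} the solution must then converge to the EE. The ``moreover'' clause follows by the same reasoning applied to an orbit on the unstable manifold of $\mathcal{M}_0$, which by Theorem~\ref{thm:TransversalityCases4and5} lies inside $\Delta_N$: such an orbit emanates from $\mathcal{M}_0$ (where $\Lambda=0$) with $p^I\succ\mathbf{0}$, so $\Lambda$ increases strictly into $\mathcal{M}_+$ and the orbit converges to the EE, exhibiting the EE as the forward limit of the unstable manifold.

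For the second bullet I would run a basin-boundary connectedness argument on the face $F=\{(p^S,p^I)\in\Delta_N \mid p^S=\mathbf{1}-p^I\}$ (equivalently $p^R=\mathbf{0}$). Take the continuous path $\gamma(s)$ in $F$ with $p^I(0)=(1-s)\,\epsilon v + s\,\mathbf{1}$ and $p^S(0)=\mathbf{1}-p^I(0)$, where $v\gg\mathbf{0}$ and $\epsilon>0$ is small. At $s=0$ the initial point is near the corner $(\mathbf{1},\mathbf{0})$, where $\Lambda(\mathbf{1})=\Lambda(p^{min})<0$ because $R_{min}=R_0<1$ (Lemma~\ref{lemma:LambdaMaxMin} and Proposition~\ref{prop:MetzlerRS}); thus $(\mathbf{1},\mathbf{0})\in\mathcal{M}_-$ and, for $\epsilon$ small, $\gamma(0)$ lies in the stable neighborhood $V$ of Lemma~\ref{lemma:M} and converges to $\mathcal{M}_-$. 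At $s=1$ one has $p^S\equiv\mathbf{0}$ invariant (since $P^S=\bar{\mathbf{0}}$ makes $\dot p^S=\mathbf{0}$), reducing the flow to the network SIS dynamics with matrix $\hat B$ and $R_1>1$, whose projection $\mathbf{0}=p^{max}\in\mathcal{M}_+$ is unstable, so $\gamma(1)$ converges to the EE. Letting $A_-$ and $A_E$ be the sets of initial conditions converging to $\mathcal{M}_-$ and to the EE, both are open (basins of asymptotically stable sets; the EE is locally stable by Proposition~\ref{prop:EEstrong}), disjoint, and contain $\gamma(0)$ and $\gamma(1)$. Since $[0,1]$ is connected, $\gamma^{-1}(A_-)$ and $\gamma^{-1}(A_E)$ cannot cover it, so some $s^\ast$ lies outside both; as convergence to $\mathcal{M}_+$ is impossible (Lemma~\ref{lemma:M}), Lemma~\ref{lemma:IFEorEE_as_t2infty} forces $\gamma(s^\ast)$ to converge to $\mathcal{M}_0$, placing it on the stable manifold of $\mathcal{M}_0$ and in $F$, as claimed.

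The step I expect to be the main obstacle is the openness of the basin $A_-$ together with the clean identification of $\mathcal{M}_0$ as the topological boundary between $A_-$ and $A_E$. Because $\mathcal{M}_-$ is a continuum of non-isolated, non-hyperbolic equilibria, openness of $A_-$ cannot be quoted from a textbook asymptotic-stability result; it must be extracted from the foliated family of stable manifolds produced by the Shoshitaishvili reduction in the proof of Lemma~\ref{lemma:M}. One must also verify that the leftover parameter $s^\ast$ genuinely lands on the stable manifold of $\mathcal{M}_0$ rather than on some degenerate orbit, which is where the strict monotonicity of $\Lambda$ and the transversality of Theorem~\ref{thm:TransversalityCases4and5} are essential.
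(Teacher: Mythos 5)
Your argument for the first bullet is essentially the paper's own: monotone increase of $\Lambda$ along $p^S$ in Case 5 (via Theorem~\ref{thm:TransversalityCases4and5} and Proposition~\ref{prop:Trans1}), trapping the projection in $\mathcal{M}_+$ after the crossing, ruling out convergence to $\mathcal{M}_+$ by Lemma~\ref{lemma:M}, and invoking Lemma~\ref{lemma:IFEorEE_as_t2infty} to force convergence to the EE; the ``moreover'' clause is handled the same way. For the second bullet you take a genuinely different route. The paper argues in backward time: it takes a point of the stable manifold of $\mathcal{M}_0$ (which Theorem~\ref{thm:TransversalityCases4and5} places inside $\Delta_N$), notes that the backward orbit's projection stays in $\mathcal{M}_-$, that points of $\mathcal{M}_-$ have no unstable manifold inside $\Delta_N$ (Lemma~\ref{lemma:M}), and that the components of $p^S$ increase monotonically as $t\to-\infty$, so the backward orbit must exit $\Delta_N$ through the face $p^S=\mathbf{1}-p^I$. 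You instead shoot forward along a path of initial conditions on that face, from a neighborhood of the corner $(\mathbf{1},\mathbf{0})\in\mathcal{M}_-$ (correctly using $R_{min}=R_0<1$ in Case 5) to $(\mathbf{0},\mathbf{1})$ (where the flow reduces to SIS with $\hat B$ and $R_1>1$), and use openness and disjointness of the two basins plus connectedness of $[0,1]$ to produce a point of the face whose forward orbit converges to $\mathcal{M}_0$. Both arguments are sound at the paper's level of rigor, and your version delivers exactly what the statement asks (an intersection point), whereas the paper's says slightly more (every stable-manifold orbit, traced backward, reaches that face). The trade-off is the one you identify: your route must establish openness of the basin of $\mathcal{M}_-$, which cannot be quoted off the shelf because $\mathcal{M}_-$ is a continuum of non-hyperbolic equilibria and must instead be extracted from the positively invariant foliation in the proof of Lemma~\ref{lemma:M} (enter $V$ in finite time, then continuity of the flow); the paper's backward-time argument sidesteps this entirely, but presupposes the stable manifold is nonempty in $\Delta_N$, which your construction instead proves.
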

\begin{proof}
If $y$ crosses $\mathcal{M}_0$ transversally at $t=t'$, then, by Theorem~\ref{thm:TransversalityCases4and5}, $p^S$ crosses every level surface $\Lambda_c$ in $\mathcal{M}$ transversally and crosses $\mathcal{M}_0$ from $\mathcal{M}_-$ to $\mathcal{M}_+$. By Proposition~\ref{prop:Trans1}, $\Lambda$ strictly increases along $p^S$. It follows by Lemma~\ref{lemma:MSubsetsAndLambda} that $(p^S,\mathbf{0}) \in \mathcal{M}_+$ for all $t>t'$. Since by Lemma~\ref{lemma:M} $y$ cannot converge to a point in the IFE subset $\mathcal{M}_+$, it follows by Lemma~\ref{lemma:IFEorEE_as_t2infty} that $y$ converges to the EE as $t \to \infty$. The same argument holds if $(p^S(0),\mathbf{0}) \in \mathcal{M}_+$.  It follows that the EE lies on the unstable manifold of $\mathcal{M}_0$.

Consider any point $y = (p^S,p^I) \in \Delta_N$ on the stable manifold of $\mathcal{M}_0$. Then, as $t \to -\infty$ and  $y(t) \in \Delta_N$, $y$ remains on the stable manifold of $\mathcal{M}_0$ and $(p^S,\mathbf{0})$ remains in $\mathcal{M}_-$. By Lemma~\ref{lemma:M}, points in $\mathcal{M}_-$ have no unstable manifold in $\Delta_N$ so the stable manifold of $\mathcal{M}_0$ cannot intersect $\mathcal{M}_-$. Instead, because the components of $p^S$ increase monotonically as $t \to -\infty$, $y$ intersects $\partial\Delta_N$ where $p^S = \mathbf{1}-p^I$.
\end{proof}

The locations of the stable and unstable manifolds of $\mathcal{M}_0$, as proved in Theorem~\ref{thm:TransversalityCases4and5}, are illustrated in Figure~\ref{fig:NSIRI_Quad}:  outside $\Delta_N$ in the epidemic regime and inside $\Delta_N$ in the bistable regime.  The figure shows the heteroclinic orbits proved in Corollary~\ref{corollary:Cases2and4} for the epidemic regime.  The solutions along the heteroclinic orbits cross $\mathcal{M}_0$ transversally, with their projection onto $\mathcal{M}$ crossing from $\mathcal{M}_+$ to $\mathcal{M}_-$, as proved in Theorem~\ref{thm:TransversalityCases4and5}.

Figure~\ref{fig:NSIRI_Quad} also shows the local flow in the bistable regime, as proved in Corollary~\ref{corollary:Cases5}. These solutions also cross $\mathcal{M}_0$ transversally, but with their projection onto $\mathcal{M}$ crossing from $\mathcal{M}_-$ to $\mathcal{M}_+$, as proved in Theorem~\ref{thm:TransversalityCases4and5}.   


\subsection{Bistability and Resurgent Epidemic}
In this section we examine the bistable regime of Theorem~\ref{thm:NSIRImain}, which exists for Case 5 (compromised immunity) and for Case 6 (mixed immunity). We show how solutions in this regime can exhibit a {\em resurgent epidemic} in which an initial infection appears to die out for an arbitrarily long period of time, but then abruptly and surprisingly resurges to the EE. 

Conditions on the initial state that predict a resurgent epidemic were proved for the well-mixed SIRI model in~\cite{pagliaraSIRIWM}. {\color{black} Here we compute a critical condition on the initial state in the special case that $\mathcal{G}$ is a $d$-regular digraph, i.e., every agent $j$ in $\mathcal{G}$ has degree $d_j=d$,} and every node has the same initial state. We then illustrate numerically for a more general digraph with compromised immunity  in Figure~\ref{fig:bistability}.

\begin{figure}[!t]
\centering
\includegraphics[width=3.5in]{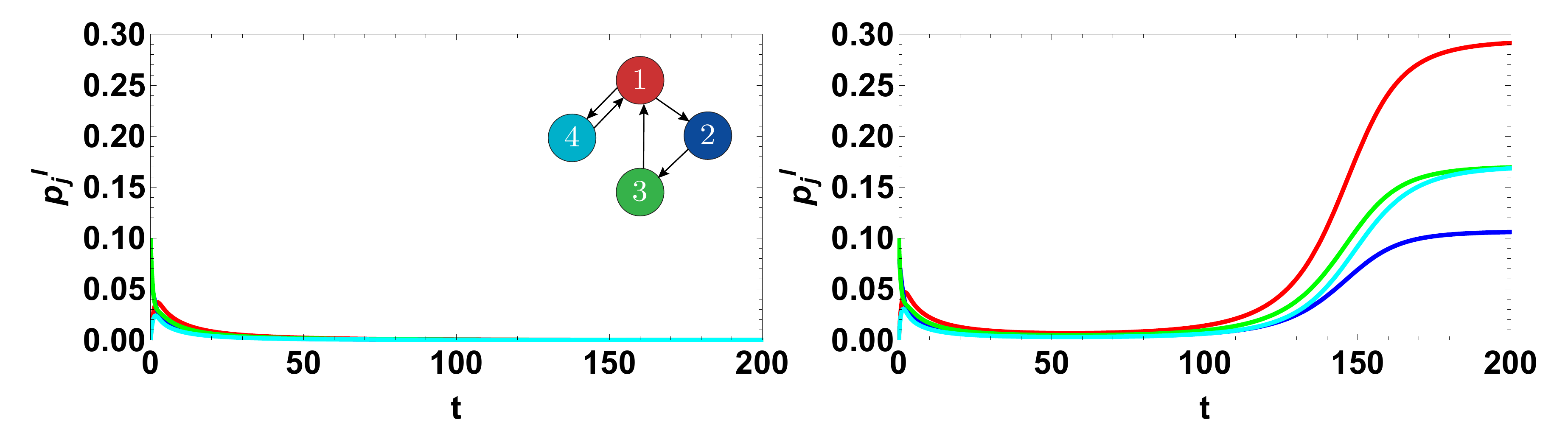}
\caption{Bistability and resurgent epidemic. Simulation of $p^I_j$ versus time $t$  for network of $N=4$ agents in  bistable regime of Case 5 (compromised immunity) with $p^S(0) = \mathbf{1}-p^I(0)$:  $j=1$ in red, $j=2$ in blue, $j=3$ in green, and $j=4$ in cyan. $A$ is the unweighted adjacency matrix of the digraph shown. $B = 0.7 A$, $\hat{B} = \textrm{diag}([1.5,0.7,0.7,0.7])A$, and $D = \mathbb{I}$. Left. $p^I(0) = [0,0.05,0.1,0]^T$. Right. $p^I(0) = [0,0.08,0.1,0]^T$. 
}
\label{fig:bistability}
\end{figure}

Consider a $d$-regular digraph with global recovery, infection, and reinfection rates: $D = \delta \mathbb{I}$, $B = \beta A$, and $\hat{B} = \hat{\beta} A$, where $A$ is the adjacency matrix. 
The network SIRI  dynamics~\eqref{eq:pj} are
\begin{align}
\dot{p}^S_j &= -\beta d p^S_j p^I_j - \beta p^S_j \sum_{k=1}^N a_{jk} (p^I_k - p^I_j) \nonumber \\
\dot{p}^I_j &=  -\delta p^I_j + (\beta-\hat{\beta}) d p^S_j p^I_j + \hat{\beta} d p^I_j - \hat{\beta}d (p^I_j)^2 \nonumber \\
&+ \left((\beta-\hat{\beta}) p^S_j + \hat{\beta} (1-p^I_j) \right) \sum_{k=1}^N a_{jk} (p^I_k - p^I_j),
\label{eq:globalSIRI}
\end{align}
where we have used the identity
$
\sum_{k=1}^N a_{jk} p^I_k = p^I_j d + \sum_{k=1}^N a_{jk} (p^I_k-p^I_j)$. Let $p^I(0) = p_{ic} \mathbf{1}$. Then (\ref{eq:globalSIRI}) reduce to
\begin{align}\label{eq:reg_graph}
\dot{p}^S_j &= -\beta d p^S_j p^I_j \nonumber\\
\dot{p}^I_j &=  -\delta p^I_j + (\beta-\hat{\beta}) d p^S_j p^I_j + \hat{\beta} d p^I_j - \hat{\beta}d(p_j^I)^2. 
\end{align}
\eqref{eq:reg_graph} describes identical and uncoupled dynamics for every agent $j$, which are equivalent to the dynamics of the well-mixed SIRI model~\cite{pagliaraSIRIWM} with infection rate $\beta d$ and reinfection rate $\hat{\beta}d$. Following~\cite{pagliaraSIRIWM}, we find the critical initial condition $p_{crit} = 1 - \xi(R_0 d \xi)^{-\beta/\hat{\beta}}$, where $\xi = (R_1-1/d)/(R_1-R_0)$. If $p_{ic} < p_{crit}$ solutions converge to a point in the IFE as $t \to \infty$.  If $p_{ic} > p_{crit}$ solutions converge to the EE, $p^{I*} = (1 - \delta/(\hat{\beta}d))\mathbf{1}$, as $t \to \infty$.
If $p_{ic} = p_{crit}$, the solution flows along the stable manifold of the point $\xi \mathbf{1} \in \mathcal{M}_0$ and converges to $\xi \mathbf{1}$. These results suggest more generally that the stable manifold of $\mathcal{M}_0$ separates solutions that converge to the IFE from those that converge to the EE, as in~\cite{pagliaraSIRIWM}.

Further, as in~\cite{pagliaraSIRIWM}, if $p_{ic} > p_{crit}$, the solution exhibits a {\em resurgent epidemic} in which $p^I_j$ initially decreases to a minimum value $p^I_{min}$ and then increases to the EE. As $(p_{ic} - p_{crit}) \to 0$,  $p^I_{min} \to 0$  and the time it takes for the solution to resurge goes to infinity. That is, the infection may look like it is gone for a long time before resurging without warning.
We note that the SIR and SIS models do not admit a bistable regime and therefore fail to account for the possibility of a resurgent epidemic.  This implies the SIR and SIS models are not robust to variability in infection and reinfection rates. 

Figure~\ref{fig:bistability} illustrates the bistability and resurgent epidemic phenomena for an example network of $N=4$ agents for Case 5 (compromised immunity). $B = 0.7 A$, $\hat{B} = \textrm{diag}([1.5,0.7,0.7,0.7])A$, and $D = \mathbb{I}.$  That is, agents $2,3,4$ are stubborn and agent 1 acquires compromised immunity to all its infected neighbors (agents 2 and 4). So $R_{max} = R_1 = 1.28$ and $R_{min} = R_0 = 0.85$, placing the system in the bistable regime.
In both panels of Figure~\ref{fig:bistability}, $p^S(0) = \mathbf{1}-p^I(0)$. In the left panel $p^I(0) = [0,0.05,0.1,0]^T$ and the solution can be observed to converge to the IFE, i.e., $p^I_j \to 0$ for all $j$. In the right panel $p^I(0) = [0,0.08,0.1,0]^T$ and there is a resurgent epidemic: each $p^I_j$ initially decays and then remains close to zero until $t \approx 100$, after which the $p^I_j$ increase rapidly to the EE, which is $p^{I*} = [0.29,0.11,0.17,0.17]^T$. 

\section{Control Strategies}\label{sec:Control}
We apply our theory to design control strategies for technological, as well as biological and behavioral settings, that guarantee desired steady-state behavior, such as the eradication of an infection. We begin with an example network with mixed immunity in the endemic regime, which has an infected steady state. We show three strategies for changing parameters that modify the reproduction numbers $R_0$, $R_1$, $R_{min}$, and $R_{max}$, and control the dynamics to a behavioral regime that results in an infection-free steady state, according to Theorem~\ref{thm:NSIRImain}.  We then consider two example networks with mixed immunity in the bistable regime.  We illustrate how vaccination of well-chosen agents increases the set of initial conditions that yield an infection-free steady state or at least delay a resurgent epidemic so that further control can be introduced.

\subsection{Control from Endemic to Infection-Free Steady State}
Consider the network of four agents shown in the top left panel of Figure~\ref{fig:simulations}.  Let $B=A$, the unweighted adjacency matrix for the digraph shown. Let $\hat{B} = \textrm{diag}([0.3,1,2,1])A$ and $D = \mathbb{I}$.  The network has weak mixed immunity (Case 6a): agent 1 acquires partial immunity to reinfection, agent 3 acquires compromised immunity to reinfection, and agents 2 and 4 are stubborn.  We compute the reproduction numbers:  $R_0=1.32$, $R_1=1.22$, $R_{min}=1.13$, and $R_{max}=1.52$, which{\color{black}, by Theorem~\ref{thm:NSIRImain},} imply dynamics in the endemic regime and an infected steady state for every initial condition.  This is illustrated in the simulation of $p^I_j$ versus time $t$ for initial conditions $p^S(0) = \mathbf{1}-p^I(0)$, $p^I(0) = [0.01,0.01,0.01,0.2]^T$.  By~\eqref{eq:EE_j}, the solution converges to the EE: $p^{I*} = [0.07,0.23,0.12,0.19]^T$. 
\begin{figure}[!t]
\centering
\includegraphics[width=3.5 in]{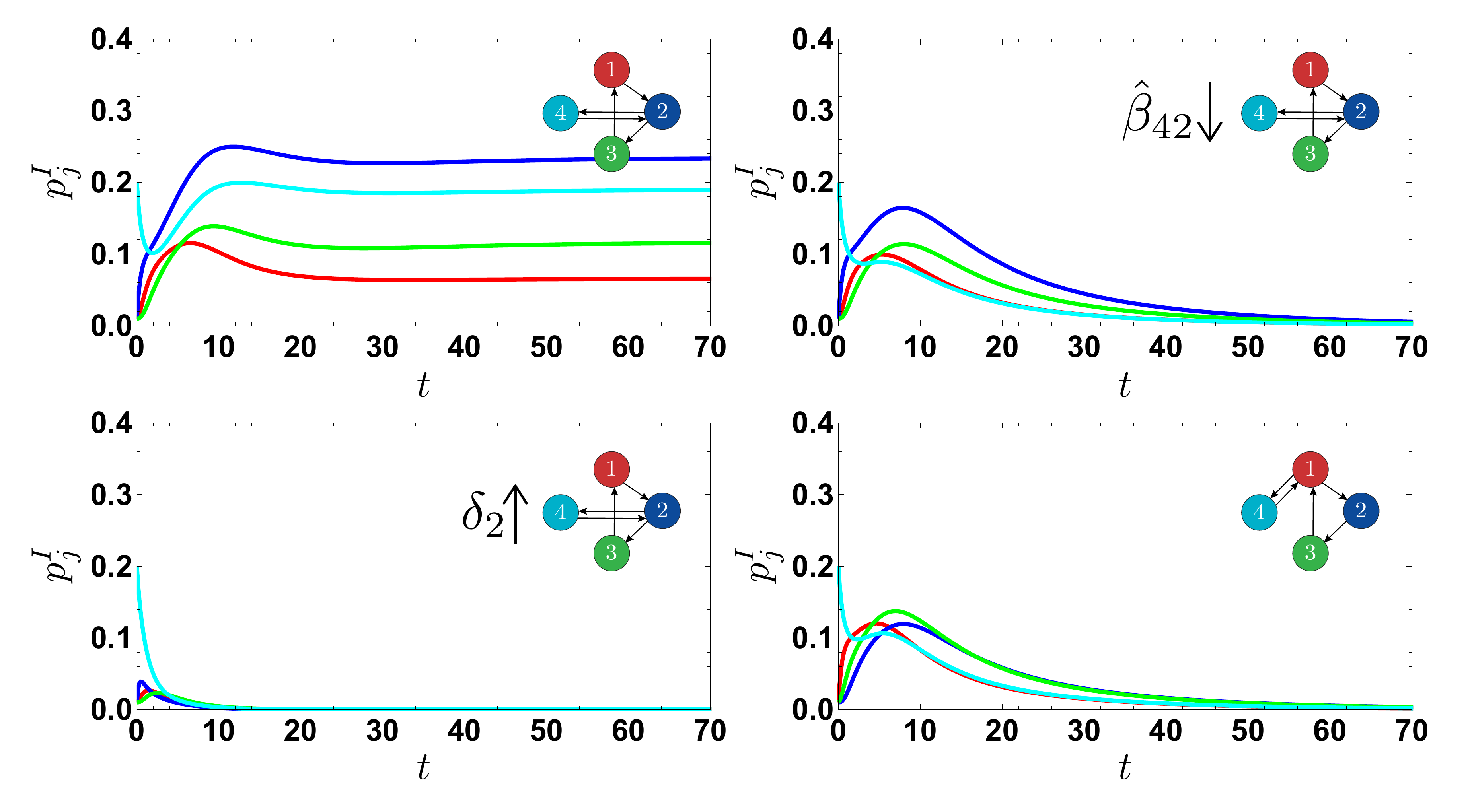}
\caption{Simulations of $p^I_j$ vs.\ $t$ to illustrate control strategies that eradicate infection: $j=1$ in red, $j=2$ in blue, $j=3$ in green, and $j=4$ in cyan. Top Left. Example network of 4 agents with weak mixed immunity (Case 6a) in the endemic regime. $B = A$, $\hat{B} = \textrm{diag}([0.3,1,2,1])A$, and $D = \mathbb{I}$. 
Bottom Left. Modification of recovery rate of agent 2 from $\delta_2 =1$ to $\delta_2 = 3.5$.  
Top Right. Modification of reinfection rate  $\hat{\beta}_{42} = 1$ to $\hat{\beta}_{42} = 0.3$. 
Bottom Right. Modification of network topology as shown.
}
\label{fig:simulations}
\end{figure}

\subsubsection{Modification of agent recovery rate}
This strategy controls the network behavior by selecting one or more agents for treatment to increase its recovery rate.  In the epidemiological setting, this could mean medication.  In the behavioral setting, this could mean providing incentives or training.  We make just one modification to the example network of four agents:  $\delta_2 = 1$ becomes $\delta_2 = 3.5$. The corresponding reproduction numbers are $R_0=0.80$, $R_1=0.72$, $R_{min}=0.65$, and $R_{max}=0.94$, which{\color{black}, by Theorem~\ref{thm:NSIRImain},} imply dynamics in the infection-free regime. Using the same initial conditions as in the top left panel, we simulate the modified system in the bottom left panel of Figure~\ref{fig:simulations}.  The  solution converges to an infection-free steady state as predicted by Theorem~\ref{thm:NSIRImain}.

\subsubsection{Modification of agent reinfection rate}
This strategy controls the network behavior by selecting one or more agents for treatment to decrease its reinfection rate.  In the epidemiological setting, this is vaccination, and in the behavioral setting, it could be inoculation as in psychology research~\cite{roozenbeek2019}.
We make just one modification to the example network of four agents:  $\hat{\beta}_{42} = 1$ becomes $\hat{\beta}_{42} = 0.3$. The corresponding reproduction numbers are $R_0=1.32$, $R_1=0.96$, $R_{min}=0.82$, and $R_{max}=1.52$, which{\color{black}, by Theorem~\ref{thm:NSIRImain},} imply dynamics in the epidemic regime. Using the same initial conditions as in the top left panel, we simulate the modified system in the top right panel of Figure~\ref{fig:simulations}.  After a small and short-lived epidemic, the solution converges to an infection-free steady state,  as predicted by Theorem~\ref{thm:NSIRImain}.

\subsubsection{Modification of network topology}
This strategy controls the network behavior by selecting one or more edges in the network graph for re-wiring.  In all  settings, this means affecting who comes in contact with whom.  
For the example network, we move the connections between agents 4 and 1 to  be 
between agents 4 and 2. The corresponding reproduction numbers 
are identical to those in the modification of reinfection rate example and therefore{\color{black}, by Theorem~\ref{thm:NSIRImain},} also imply dynamics in the epidemic regime. Using the same initial conditions as in the top left panel, we simulate the modified system in the bottom right panel of Figure~\ref{fig:simulations}.   The solution converges to an infection-free steady state,  as predicted by Theorem~\ref{thm:NSIRImain}, with an initial epidemic smaller than in the top right panel. 

\subsection{Control in Bistable Regime}

\subsubsection{Small network} A network of two agents with weak mixed immunity is shown in Figure~\ref{fig:MPlanes}. Agent 1 acquires compromised immunity while agent 2 acquires partial immunity: $\beta_{12} = \hat{\beta}_{21} = 0.8$, $\hat{\beta}_{12} = \beta_{21} = 1.3$, $\delta_1 = \delta_2 =1$. {\color{black}The corresponding reproduction numbers are $R_0=R_1=1.02$, $R_{min}=0.8$, and $R_{max}=1.3$. By Theorem~\ref{thm:NSIRImain}, the system is in the bistable regime}. Suppose initially there are no recovered agents, i.e., $p^S(0) = \mathbf{1} - p^I(0)$. Then, it can be shown that the solution will always converge to the EE.  Now suppose we apply a control strategy in which we vaccinate the agent who acquires partial immunity, where vaccination is equivalent to exposing the agent to the infection.
After the vaccination of agent 2 in our example, we have $p^S_1(0) = 1 - p^I_1(0)$ with $p^I_1(0) \in [0,1]$ and $p^S_2(0) = 0$ with $p^I_2(0) \in [0,1]$.  

We illustrate the results of the vaccination of agent 2 in the left panel of Figure~\ref{fig:MPlanes}. 
Initial conditions that lead to the EE are shown in magenta and to an infection-free steady state in yellow.  We illustrate with simulations using the initial conditions $p^I_1(0)$ and $p^I_2(0)$ denoted by  the star in the left panel of Figure~\ref{fig:MPlanes}. The top right simulation is of the system with no vaccination: there is an infected steady state as predicted.  The bottom right simulation is of the system with agent 2 vaccinated: there is an infection-free steady state as predicted.

\begin{figure}[!t]
\centering
\includegraphics[width=3.25in]{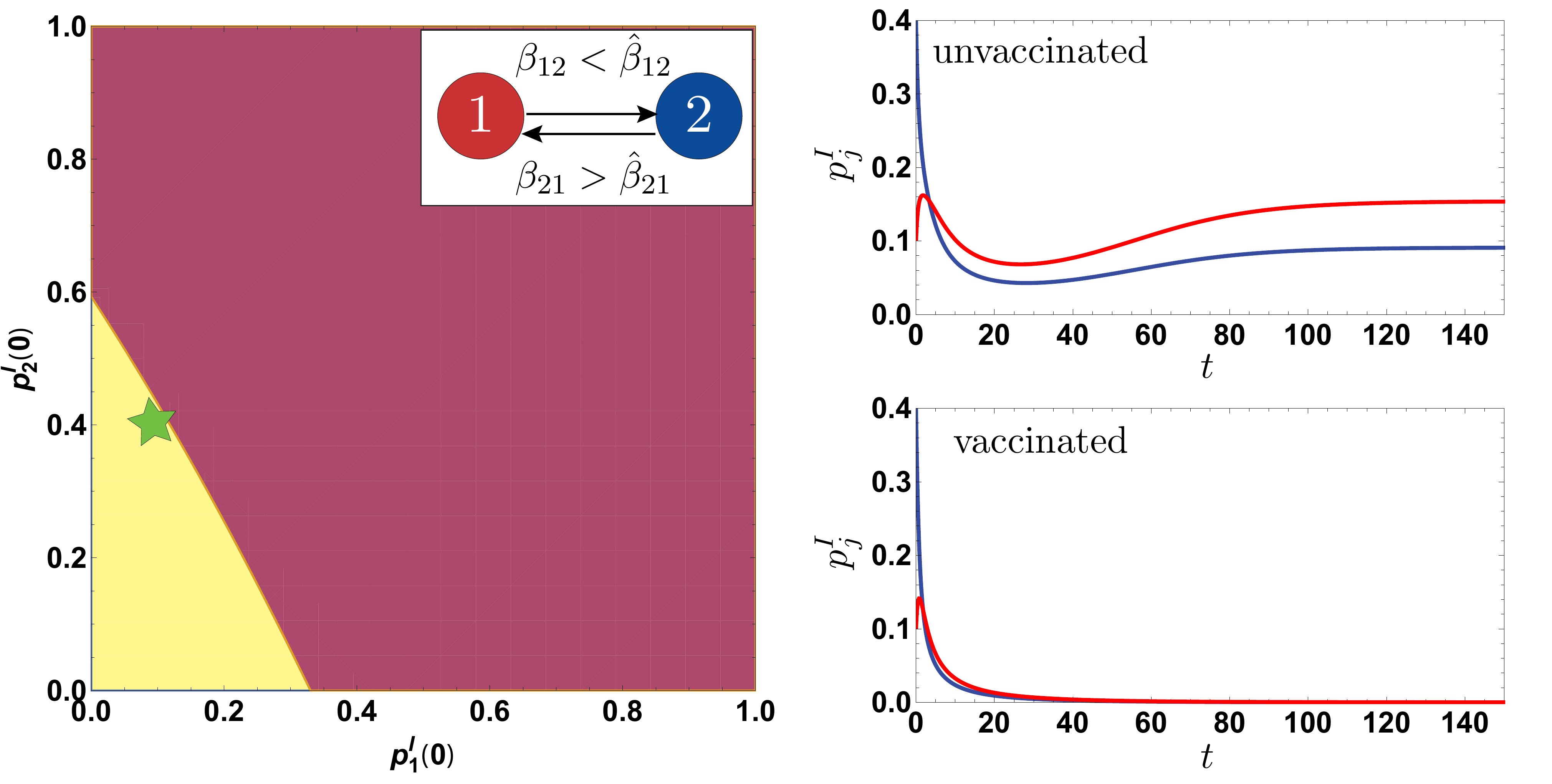}
\caption{Vaccination of agent 2 in network of two agents with mixed immunity in bistable regime. Agents 1 and 2 acquire compromised and partial immunity, respectively:
$\beta_{12} = \hat\beta_{21} = 0.8$, $\hat{\beta}_{12} = {\beta}_{21} = 1.3$, $\delta_1 = \delta_2 =1$.
Left.  
With agent 2 vaccinated, solutions with initial conditions in  magenta  converge to the EE, and 
in  yellow  converge to an IFE. 
Top Right. Simulation with no vaccination. Bottom Right. Simulation with vaccination. The initial condition is $p_1^I(0)=0.1$ and $p_2^I(0)=0.4$, shown as a star in the left panel.}
\label{fig:MPlanes}
\end{figure}

\subsubsection{Large network} A network of twenty agents with weak mixed immunity is shown in Figure~\ref{fig:LargeGraph}.  Agents 2, 3, 5, 7, 11, 13, 17, and 19 (dark gray) acquire partial immunity to reinfection: $\beta_{jk} = 0.5$, $\hat{\beta}_{jk} = 0.1$.  Agents 1 and 20 (gray) are stubborn: $\beta_{jk} = \hat{\beta}_{jk} = 0.5$. Agents 4, 6, 8, 9, 10, 12, 14, 15, 16, and 18 (light gray) acquire compromised immunity to reinfection: $\beta_{jk} = 0.5$, $\hat{\beta}_{jk} = 0.875$. $D = \mathbb{I}$. {\color{black}The corresponding reproduction numbers are $R_0=1.06$, $R_1=1.26$, $R_{min}=0.76$, and $R_{max}=1.59$. By Theorem~\ref{thm:NSIRImain}, the system is in the bistable regime}.

In Figure~\ref{fig:LargeGraph}, we plot simulations of the average infected state $\bar p^I = \mathbf{1}^T p^I/N$ versus $t$ for no vaccinations (solid black) and for different sets of vaccinated agents: agents 2, 3, 5, and 7 (dotted blue), agent 11 (dashed orange), agents 7 and 11 (point-dashed green), and agents 7, 11, and 13 (dashed violet). Vaccinating agent 11 has the strongest effect. The infection appears to be eradicated by vaccinating agents 7, 11, and 13. The other vaccination cases delay the epidemic, which provides time for treatments or other control interventions.

\begin{figure}[!t]
\centering
\includegraphics[width=3.5in]{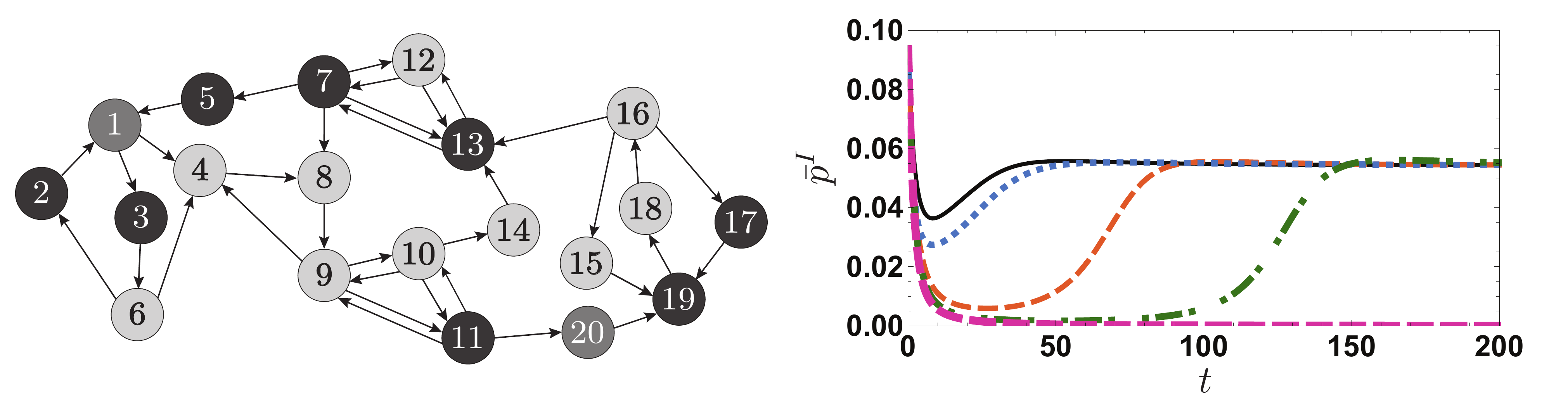}
\caption{
Left. Network of twenty agents with weak mixed immunity in bistable regime. 
Agents in dark gray acquire partial immunity to reinfection: $\beta_{jk} = 0.5$, $\hat{\beta}_{jk} = 0.1$.  Agents in gray are stubborn: $\beta_{jk} = \hat{\beta}_{jk} = 0.5$. Agents in light gray acquire compromised immunity to reinfection: $\beta_{jk} = 0.5$, $\hat{\beta}_{jk} = 0.875$. $D = \mathbb{I}$.
Right. Simulations of $\bar{p}^I$ for no vaccination (solid black), and vaccinations of agents 2,3,5,7 (dotted blue), agent 11 (dashed orange), agents 7, 11 (point-dashed green), and agents 7, 11, 13  (dashed violet). Initial conditions are $p^I_1(0) = p_{20}^I(0) = 0.5$ and $p_j^I(0)=0.05$ for $j \neq 1,20$.}
\label{fig:LargeGraph}
\end{figure}

\section{Final Remarks}\label{sec:Conclusion}

The network SIRI model generalizes the network SIS and SIR models by allowing agents to adapt their susceptibility to reinfection.
We have proved new conditions on network structure and model parameters that distinguish four behavioral regimes in the network SIRI model. The conditions depend on four scalar quantities, which generalize the basic reproduction number used in the analysis and control of the SIS and SIR models.
The SIRI model captures dynamic outcomes that the SIS and SIR models necessarily miss, most dramatic of which is the resurgent epidemic and the bistable regime.

The generality of the network SIRI model provides a means to assess robustness to uncertainty and changes in infection rates.
Further, the model provides new flexibility in control design, including, as we have illustrated, selective vaccination of agents that acquire partial immunity or modification of recovery rates, reinfection rates, and wiring, to leverage what can happen when agents adapt their susceptibility.  

Control strategies from the literature can also be extended to the SIRI setting, for example, optimal node removal, optimal link removal, and budget-constrained allocation~\cite{nowzari2016analysis,khanafer2014optimal,preciado2009spectral,nowzari2017optimal}.  However, optimal node and link removal have been shown to be NP-complete and NP-hard problems, respectively~\cite{nowzari2016analysis,van2011decreasing} and all of these strategies rely on knowledge of the network structure or system parameters.  A practical alternative is to design feedback control strategies whereby agents actively adapt their susceptibility. 

 {\color{black}Our results were obtained using the IBMF approach~\cite{van2009virus,pastor2015epidemic} to reduce the size of the state space.}
 It has been shown that if solutions in an IBMF model converge to an infection-free equilibrium, then the expected time it takes the stochastic Markov model to reach the infection-free absorbing state is sublinear with respect to network size~\cite{nowzari2016analysis}. It is an open question if bistability in the stochastic Markov model will be sensitive to noise. Other questions to be explored include deriving centrality measures that facilitate optimal control design and extending to the case in which agents adapt their susceptibility to every reinfection.
\ifCLASSOPTIONcaptionsoff
  \newpage
\fi



\bibliographystyle{IEEEtran}
\bibliography{references}

%

\begin{IEEEbiography}[{\includegraphics[clip,height=1.25in,width=1in,keepaspectratio]{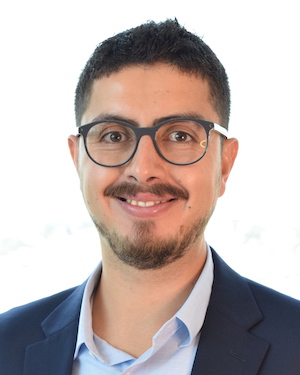}}]{Renato Pagliara}
received the B.A.Sc degree in biomedical engineering from Simon Fraser University, Burnaby, BC, Canada, in 2012, and the M.A.
and Ph.D. degrees in mechanical and aerospace engineering from Princeton University, Princeton, NJ, USA, in
2017 and 2019, respectively. 

His research interests include modeling and analysis of collective behavior in biological and engineered systems, and control for multiagent systems.
\end{IEEEbiography}

\begin{IEEEbiography}[{\includegraphics[width=1in,height=1.25in,clip,keepaspectratio]{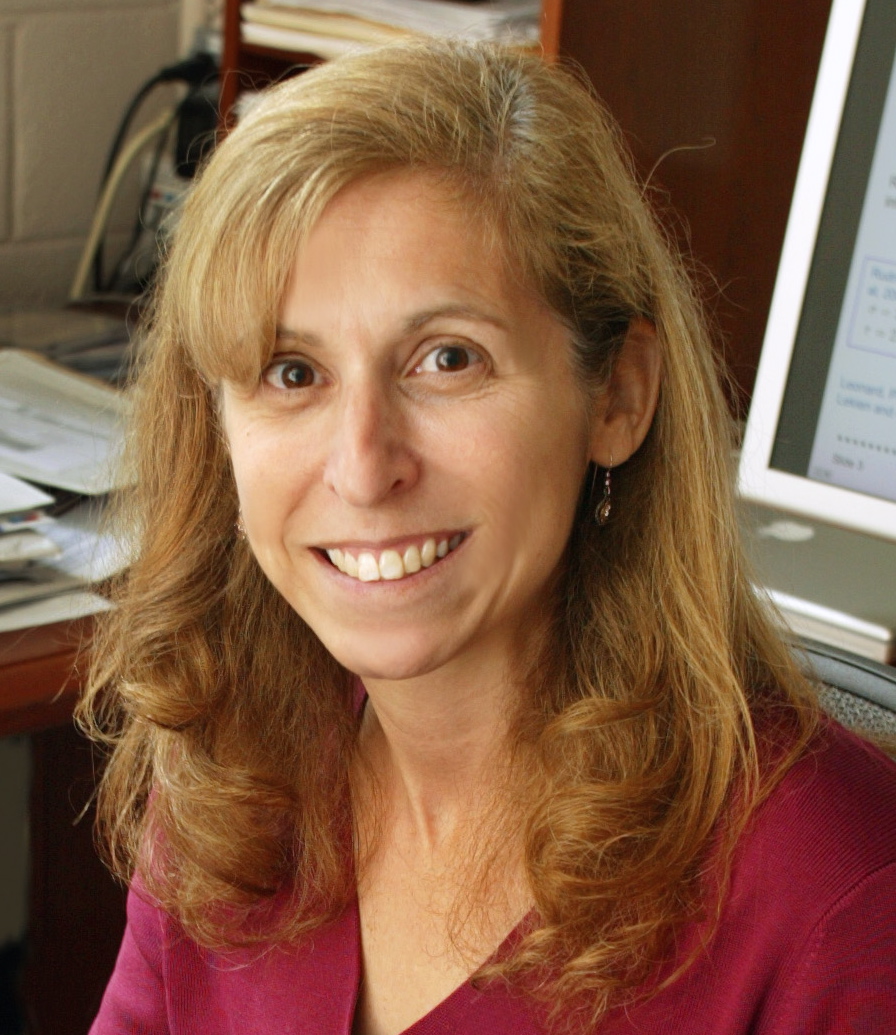}}]{Naomi Ehrich Leonard}
(F’07) received the B.S.E. degree in mechanical engineering from Princeton University, Princeton, NJ, USA, in 1985, and the M.S.
and Ph.D. degrees in electrical engineering from the University of Maryland, College Park, MD, USA, in
1991 and 1994, respectively. 

From 1985 to 1989, she was an Engineer in the electric power industry. She is currently the Edwin
S. Wilsey Professor with the Department of Mechanical and Aerospace Engineering and the Director of the Council on Science and Technology at Princeton University. She is also an Associated Faculty of Princeton's Program in Applied and Computational Mathematics. Her research and teaching are in control and dynamical systems with current interests including control for
multiagent systems, mobile sensor networks, collective animal behavior, and social decision-making dynamics.
\end{IEEEbiography}







\end{document}